\def\I{{\mathrm I}}
\def\II{{\mathrm{II}}}
\def\III{{\mathrm{III}}}
\def\L{{\Lambda}}
\def\t{{\theta}}
\def\l{{\lambda}}
\def\b{{\beta}}
\def\a{{\alpha}}
\def\e{{\varepsilon}}
\def\beq{\begin{equation}}
\def\eeq{\end{equation}}
\newcommand{\Z}{{\mathbb Z}}
\newcommand{\R}{{\mathbb R}}
\newcommand{\C}{{\mathbb C}}
\newcommand{\D}{{\mathbb D}}
\newcommand{\N}{{\mathbb N}}
\newcommand{\PP}{{\mathbb P}}
\newcommand{\CA}{{\mathcal A}}
\newcommand{\CC}{{\mathcal C}}
\newcommand{\CD}{{\mathcal D}}
\newcommand{\CB}{{\mathcal B}}
\newcommand{\CH}{{\mathcal H}}
\newcommand{\CI}{{\mathcal I}}
\newcommand{\CN}{{\mathcal N}}
\newcommand{\CU}{{\mathcal U}}
\newcommand{\CJ}{{\mathcal J}}
\newtheorem{theorem}{Theorem}
\newtheorem{remark}{Remark}
\newtheorem{lemma}{Lemma}
\newtheorem{defi}{Definition}
\newtheorem{prop}{Proposition}
\newtheorem{corollary}{Corollary}
\begin{document}

\title[Positivity and Continuity of Lyapunov Exponents]{Uniform Positivity and Continuity of Lyapunov Exponents for a class of $C^2$ Quasiperiodic Schr\"odinger Cocycles}

\author[Y.\ Wang]{Yiqian Wang}

\address{Department of Mathematics, Nanjing University, Nanjing 210093, China}

\email{yiqianw@nju.edu.cn}

\thanks{The first author was supported by the National Natural Science Foundation of China grant No. 11271183.}

\author[Z.\ Zhang]{Zhenghe Zhang}

\address{Department of Mathematics, Northwestern University, Evanston, IL~60208, USA}

\email{zhenghe@math.northwestern.edu}

\begin{abstract} We show that for a class of $C^2$ quasiperiodic
potentials and for any \emph{Diophantine} frequency, the Lyapunov exponents of the corresponding Schr\"odinger cocycles are uniformly positive and weak H\"older continuous as function of energies. As a corollary, we also obtain that the corresponding integrated density of states (IDS) is weak H\"older continous. Our approach is of purely dynamical systems, which depends on a detailed analysis of asymptotic stable and unstable directions. We also apply it to more general $\mathrm{SL}(2,\R)$ cocycles, which in turn can be applied to get uniform positivity and continuity of Lyapuonv exponents around unique nondegenerate extremal points of any smooth potential, and to a certain class of $C^2$ Szeg\H o cocycles.
\end{abstract}

\maketitle
\section{{\bf Introduction}}
Consider the family of Schr\"odinger operators $H_{\alpha,\l v,x}$ on $\ell^2(\Z)\ni u=(u_n)_{n\in\Z}$:
\beq\label{operator}
(H_{\alpha,\lambda v,x}u)_n=u_{n+1}+u_{n-1}+\lambda v(x+n\alpha)u_n.
\eeq
Here $v\in C^r(\R/\Z,\R),r\in\N\cup\{\infty,\omega\}$ is the potential, $\lambda\in\R$ coupling constant, $x\in\R/\Z$ phase, and $\alpha\in\R/\Z$ frequency. For simplicity, we may sometimes left $\a, \l, x$ in $H_{\alpha,\lambda v,x}$ implicit.  Let $\Sigma(H_{\a,\l v,x})$ be the spectrum of the operator. Then it is well-known that
\beq\label{interval-contains-spectrum}
\Sigma_{\a,\l v}\subset [-2+\l\inf v, 2+\l\sup v].
\eeq

Moreover, for irrational $\a$, due to a theorem of Johnson \cite{johnson}, $\Sigma(H_{\a,\l v,x})$ is phase-independent. This follows from minimality of the irrational rotation, see also \cite{zhang2} for a simple proof. Let $\Sigma_{\a,\l v}$ denote the common spectrum in this case.

Consider the eigenvalue equation $H_{\lambda,x}u=Eu.$ Then there is an associated cocycle map which is denoted as
$A^{(E-\lambda v)}\in C^{r}(\mathbb R/\mathbb Z, \mathrm{SL}(2,\mathbb R))$, and is given by
\beq\label{schrodinger-cocycle-map}
A^{(E-\lambda v)}(x)=\begin{pmatrix}E-\lambda v(x)& -1\\1& 0\end{pmatrix}.
\eeq
Then $(\alpha, A^{(E-\lambda v)})$ defines a family of dynamical systems on $(\R/\Z)\times\R^2$, which is given by $(x,w)\mapsto (x+\alpha, A^{(E-\l v)}(x)w)$ and is called the Schr\"odinger cocycle. The $n$th iteration of dynamics is denoted by $(\alpha, A^{(E-\l v)})^n=(n\alpha, A^{(E-\l v)}_n)$. Thus,
$$
A^{(E-\l v)}_n(x)=\begin{cases}A^{(E-\l v)}(x+(n-1)\a)\cdots A^{(E-\l v)}(x), &n\ge 1;\\ Id, &n=0;\\ [A^{(E-\l v)}_{-n}(x+n\a)]^{-1}, &n\le -1.\end{cases}
$$

The relation between operator and cocycle is the following. $u\in\C^{\Z}$ is a solution of the equation $H_{\lambda,x}u=Eu$ if and only if
$$
A^{(E-\l v)}_n(x)\binom{u_{0}}{u_{-1}}=\binom{u_{n}}{u_{n-1}},\ n\in\Z.
$$
This says that $A^{(E-\l v)}_n$ generates the $n$-step transfer matrices for the operator (\ref{operator}).

The Lyapunov Exponent (LE for short), $L(E,\l)$, of this cocycle is given by
$$
L(E,\l)=\lim\limits_{n\rightarrow\infty}\frac{1}{n}\int_{\R/\Z}\ln\|A^{(E-\l v)}_n(x)\|dx=\inf_n\frac{1}{n}\int_{\R/\Z}\ln\|A^{(E-\l v)}_n(x)\|dx\geq 0.
$$
The limit exists and is equal to the infimum since $\{\int_{\R/\Z}\ln\|A^{(E-\l v)}_n(x)\| dx\}_{n\geq1}$ is a subadditive sequence. Then by Kingman's subadditive ergodic theorem, we also have for irrational $\alpha$,
$$
L(E,\l)=\lim\limits_{n\rightarrow\infty}\frac{1}{n}\ln\|A^{(E-\l v)}_n(x)\| \mbox{ for } a.e.\ x\in\R/\Z.
$$

The integrated density of states (IDS for short), $N(E)$, is given by
$$
N(E)=\lim_{n\rightarrow\infty}\frac1n \mathrm{card}\left\{(-\infty,E)\cap \Sigma(H_{n,x})\right\}\mbox{ for } a.e.\ x\in\R/\Z.
$$
Here $H_{n,x}$ denote the restriction of the operator $H_{\l,x}$ to $[0,n]$ with Dirichlet boundary condition $u_{n+1}=0$, $\Sigma(H_{n,x})$ the set of eigenvalues of $H_{n,x}$, and $\mathrm{card}$ the cardinality of a set. It is well known that the convergence is independent of Lebesgue almost every $x\in\R/\Z$. Moreover, the Lyapunov exponent $L$ and the integrated density of states $N$ are related via the following famous Thouless' Formula
\beq\label{thouless}
L(E)=\int\log|E-E'|dN(E'),
\eeq
which basically says that $L$ is the Hilbert transform of $N$ and vice versa. It is well-known that Hilbert transform preserves H\"older or some weak H\"older continuity (e.g. the continuity results we obtained in Theorem~\ref{t.continuity} in Section 1.1), see \cite{goldstein} for some detailed description. In particular, H\"older and weak H\"older continuity pass from $L$ to $N$ and vice versa.

\subsection{Statement of Main Results}
In this paper, from now on, we assume $v\in C^2(\mathbb R/\mathbb Z,\mathbb R)$ satisfy the following conditions. Assume
$\frac{dv}{dx}=0$ at exactly two points, one is minimal and the other maximal, which are denoted by $z_1$ and $z_2$. Assume that these two extremals are non-degenerate. In other words, $\frac{d^2v}{dx^2}(z_j)\neq0$ for $j=1,2$.

Fix two positive constants $\tau, \gamma$. We say $\a$ satisfying a \emph{Diophantine} condition $DC_{\tau,\gamma}$ if
$$
|\a-\frac{p}{q}|\ge \frac{\gamma}{|q|^{\tau}} \mbox{ for all } p, q\in \Z \mbox{ with } q\neq 0.
$$
It is a standard result that for any $\tau>2$,
$$
DC_\tau:=\bigcup_{\gamma>0}DC_{\tau,\gamma}
$$
is of full Lebesgue measure. Let us fix an arbitrary $\tau>2$ and consider $\a\in DC_\tau$. Then, we would like to show the following results.
\begin{theorem}\label{t.main}
  Let $\a$ and $v$ be as above. Consider the Schr\"odinger cocycle with potential $v$ and coupling constant $\l$. Let $L(E,\l)$ be the associated Lyapunov exponents. Then for all $\e>0$, there exist a $\l_0=\l_0(\a,v,\e)>0$ such that
  \beq\label{lower-bound-LE}
  L(E,\l)>(1-\e)\log\l
  \eeq
  for all $(E,\l)\in\R\times[\l_0,\infty)$.
\end{theorem}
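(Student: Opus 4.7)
The plan is a scale-by-scale inductive construction that establishes quantitative uniform hyperbolicity of the cocycle $(\alpha, A^{(E-\lambda v)})$, by tracking the most expanded and most contracted directions and using the nondegeneracy of $v$ at its two extrema to control the resonant phases where the trace of the one-step matrix is small. First I would split off the trivial range of energies: when $E/\lambda \notin [\inf v - 2\lambda^{-1}, \sup v + 2\lambda^{-1}]$ the uniform bound $|E - \lambda v(x)| \ge 2 + \lambda^{\varepsilon}$ gives uniform hyperbolicity at rate $\ge (1-\varepsilon/2)\log\lambda$ directly, so it suffices to work for $E/\lambda$ in this compact range. There I define the critical (resonant) set $K_E = \{x : |E - \lambda v(x)| \le \lambda^{1-\varepsilon/4}\}$; because $v$ has exactly two critical points, both nondegenerate, $K_E$ is a union of at most four intervals of total length $O(\lambda^{-\varepsilon/8})$. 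Off $K_E$ the matrix $A^{(E-\lambda v)}(x)$ has trace at least $\lambda^{1-\varepsilon/4}$ in absolute value and hence a well-defined unit expanding direction $O(\lambda^{-(1-\varepsilon/4)})$-close to $(1,0)^T$ with expansion factor $\ge \lambda^{1-\varepsilon/2}$.

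Next I would form ``hyperbolic blocks'' by iterating between successive visits of the orbit $\{x + k\alpha\}$ to $K_E$. The Diophantine condition $\alpha \in DC_\tau$ together with a three-distance / gap estimate for return times of an irrational rotation to an interval implies that the return time $T(x)$ is at least of order $|K_E|^{-1/\tau}$, a polynomially large quantity in $\lambda$, and that different visits are quantitatively separated. On each such block the concatenation of transfer matrices expands along the iterated expanding direction by at least $\lambda^{(1-\varepsilon/2)T}$, yielding a lower bound of the same order on the block norm by the standard singular-value analysis.

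The principal obstacle, and the place where the $C^2$ smoothness class bites, is concatenating consecutive blocks across a resonant passage: the near-parabolic one-step matrices arising for $x \in K_E$ can in principle rotate the unstable direction emerging from one block onto the stable direction of the next block, leading to near-cancellation that would destroy the naive lower bound. To address this I would maintain inductively, at each scale $N_k$ coming from the iterated return to $K_E$, measurable asymptotic unstable and stable direction fields $\xi^{u}_k(x), \xi^{s}_k(x)$ on $\mathbb{R}/\mathbb{Z}$, and estimate at each step the Lebesgue measure of the ``bad set'' where the two fields nearly coincide. The nondegeneracy $v''(z_j)\neq 0$ is exactly what supplies a quantitative non-tangency between successive returns to $K_E$, and combining it with the Diophantine separation forces the bad set to shrink at a summable geometric rate as $k \to \infty$. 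Passing to the limit and then integrating $\frac{1}{n}\log\|A^{(E-\lambda v)}_n(x)\|$ over $x$ yields the uniform lower bound $L(E,\lambda) \ge (1-\varepsilon)\log\lambda$ claimed in \eqref{lower-bound-LE}, which is exactly the ``detailed analysis of asymptotic stable and unstable directions'' announced in the abstract.
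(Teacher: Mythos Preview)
Your architecture is essentially the paper's: reduce to $t=E/\lambda$ in a compact range, track the angle between the $n$-step most contracted direction $s_n$ and the $n$-step most expanded direction $u_n$ on a shrinking family of critical intervals, run a scale-by-scale induction along the continued fraction denominators $q_{N+i}$, and at the end produce a positive-measure set $D_\infty$ on which $\tfrac1n\log\|A_n(x)\|\ge(1-\varepsilon)\log\lambda$. The identification of the principal obstacle --- a resonant passage that can rotate the outgoing unstable direction onto the incoming stable direction --- is also exactly right.

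The gap is in how you propose to resolve the resonance. You write that ``the nondegeneracy $v''(z_j)\neq 0$ is exactly what supplies a quantitative non-tangency between successive returns to $K_E$,'' and that the bad set shrinks at a summable rate. This is not what happens, and it is precisely the place where the $C^2$ hypothesis earns its keep. In the resonant case the two angle-graphs $s_{r^+}$ and $u_{r^-}$ do \emph{not} stay transversal: the difference $g=s_{r^+}-u_{r^-}$ undergoes a genuine bifurcation. As the resonance parameter $d$ (the offset between the two critical intervals under the short return) decreases, $g$ passes from having two transversal zeros, to a single tangential zero, to having no zero at all (which corresponds to uniform hyperbolicity). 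The paper handles this by classifying $g$ into three explicit shapes --- type $\mathrm{I}$ (transversal), type $\mathrm{II}$ (quadratic tangency), and type $\mathrm{III}$ (the resonant form $\tan^{-1}(l^2\tan f_1)-\tfrac\pi2+f_2$) --- and proving a $C^2$ lemma (Lemma~\ref{l.s-deri-resonance}) that locates the critical points of the type~$\mathrm{III}$ function, bounds $|f|$ away from them by $c r'^3$, and controls the second derivative at points where the first derivative is small. The nondegeneracy $v''(z_j)\neq 0$ is used not to prevent tangency but to ensure that when tangency occurs the second derivative of $g$ is bounded below, so that the set $\{|g|<\delta\}$ is still small enough (of size $\delta^{1/2}$ rather than unbounded). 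Without this bifurcation analysis and the accompanying $C^2$ estimates of Lemmas~\ref{l.ess-change}--\ref{l.s-deri-resonance}, your induction will break the first time a strong resonance occurs, because the angle between blocks can legitimately be $O(\lambda^{-k})$ on an interval whose size you have not controlled.

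A second, smaller issue: your return-time lower bound ``at least of order $|K_E|^{-1/\tau}$'' is the first return to the full critical set $K_E$, but the resonant scenario is exactly when the orbit jumps between the two components of $K_E$ in a short time $k<q_N$. The paper's fix is to redefine the return time as the first return after time $q_{N+i-1}$, and to show that once resonance occurs this redefined return time jumps to at least $q_{N+i-1}^2$; you will need the same device.
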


\begin{theorem}\label{t.continuity}
Let $\a$ and $v$ be in Theorem~\ref{t.main}. Consider the Schr\"odinger cocycle with potential $v$ and coupling constant $\l$. Then there exist a $\l_1=\l_1(\a,v)>0$ such that for any fixed $\l>\l_1$, if we let $L(E)$ be the Lyapunov exponents and $N(E)$ integrated density of states (IDS), then for all $E, E'\in [\l\inf v-2,\l\sup v+2]$, it holds that
\beq\label{log-holder}
|L(E)-L(E')|+|N(E)-N(E')|<Ce^{-c(\log|E-E'|^{-1})^\sigma},
\eeq
where $c, C>0$ depends on $\a,v,\l$, and $0<\sigma<1$ on $\a$.
\end{theorem}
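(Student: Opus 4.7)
The plan is to combine a finite-scale Large Deviation Theorem (LDT) with the Avalanche Principle (AP) in the purely dynamical framework afforded by the stable/unstable direction analysis behind Theorem~\ref{t.main}, then balance the resulting approximation error against the (exponential) Lipschitz-in-$E$ estimate for $L_N(E)$.

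First I would prove, at every scale $N$, an LDT of the form
\begin{equation*}
\Bigl|\Bigl\{x\in\R/\Z:\bigl|\tfrac1N\log\|A_N^{(E-\lambda v)}(x)\|-L_N(E)\bigr|>N^{-\sigma_0}\Bigr\}\Bigr|<e^{-cN^{\sigma_0}}
\end{equation*}
uniformly in $E\in[\lambda\inf v-2,\lambda\sup v+2]$ and $\lambda\ge\lambda_1$, with $\sigma_0\in(0,1)$ depending on the Diophantine exponent $\tau$. The argument tracks orbits of $x\mapsto x+\alpha$ relative to the two non-degenerate critical points $z_1,z_2$ of $v$: away from fixed neighborhoods of $\{z_1,z_2\}$ the cocycle expands along its asymptotic unstable direction at a rate proportional to $\log\lambda$, while the measure of phases whose orbit returns to a critical neighborhood anomalously often before time $N$ is controlled by $DC_\tau$. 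Since by Theorem~\ref{t.main} the hyperbolicity gap is at least $(1-\varepsilon)\log\lambda$, the AP hypotheses are met off the exceptional set, and the standard AP identity at dyadic scales $N,2N,4N,\dots$ telescopes into
\begin{equation*}
|L(E)-L_N(E)|\le e^{-cN^{\sigma_1}},\qquad \sigma_1\le\sigma_0.
\end{equation*}

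Finally, because $A_N^{(E-\lambda v)}(x)$ is polynomial of degree $N$ in $E$, the finite-scale exponent $L_N(E)$ is Lipschitz in $E$ with constant at most $e^{CN}$ for some $C=C(\lambda,v)$. Given $E,E'$ in the spectral interval, set $\delta=|E-E'|$ and choose $N\asymp\log\delta^{-1}$ so that $e^{CN}\delta\le\delta^{1/2}$. Then
\begin{equation*}
|L(E)-L(E')|\le 2e^{-cN^{\sigma_1}}+e^{CN}\delta\le C' e^{-c'(\log\delta^{-1})^{\sigma}},
\end{equation*}
using that for any $\sigma<1$ the polynomial term $\delta^{1/2}=e^{-\tfrac12\log\delta^{-1}}$ is absorbed into the sub-exponential one. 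This is the claimed modulus for $L$, with $\sigma=\sigma_1\in(0,1)$ inherited from the Diophantine parameters. The bound for $N(E)$ then follows by passing through the Thouless formula (\ref{thouless}), since the Hilbert transform preserves weak H\"older continuity of this form, as recorded in \cite{goldstein}.

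The principal obstacle is the LDT. In the Bourgain-Goldstein-Schlag analytic setting, subharmonic function theory on complex extensions of the cocycle yields the measure estimate essentially for free, and those tools are unavailable in $C^2$. Consequently the LDT must be derived entirely from the geometry of asymptotic stable and unstable directions, and one must simultaneously manage two competing small parameters: the radius of the neighborhoods of $z_1,z_2$ on which hyperbolicity degenerates, and the arithmetic scale at which $DC_\tau$ begins to forbid short resonant returns. The interplay between these two parameters is what forces $\sigma<1$ and pins down its dependence on $\alpha$.
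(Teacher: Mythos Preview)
Your strategy is exactly the paper's: derive a finite-scale LDT from the stable/unstable-direction induction, feed it into the Goldstein--Schlag Avalanche Principle to get $|L(E)+L_n(E)-2L_{2n}(E)|<Ce^{-cn^\sigma}$, balance against the elementary bound $|L_n(E)-L_n(E')|<C^n|E-E'|$ by taking $n\asymp\log|E-E'|^{-1}$, and finish via Thouless for $N(E)$. Two small corrections to your LDT sketch: the paper uses a \emph{fixed} deviation threshold $\varepsilon\log\lambda$ (which is all the induction delivers and all the AP needs), and the critical set one must avoid is not the original $\{z_1,z_2\}$ but the evolving $C_{n+1}=\{c_{n+1,1},c_{n+1,2}\}$ produced by Theorem~\ref{t.iteration} --- the bad set at scale $i\in[R_n^2,R_n^{2\tau}]$ is simply a bounded union of rotates of the exponentially small ball $B(C_{n+1},e^{-\delta q_{N+n-1}})$, so the LDT is a direct corollary of the same multiscale induction that proves Theorem~\ref{t.main} rather than a separate return-frequency argument.
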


By the discussion following (\ref{interval-contains-spectrum}), $\R\setminus [\l\inf v-2,\l\sup v+2]$ is a subset of the resolvent set, in which $N(E)$ clearly stays constant. Due to a theorem of Johnson \cite{johnson}, for irrational frequency, $(\a,A^{(E-\l v)})$ is uniform hyperbolic ($\CU\CH$ for short) if and only if $E$ is in the resolvent set. See again \cite{zhang2} for a simple proof. Then it is standard result that $L(E)$ is smooth in the $\CU\CH$ region, see e.g. \cite[Section 2.1]{avila}. Thus, in particular, for these $\a,v,\l$ as in Theorem~\ref{t.continuity}, $L(E)$ and $N(E)$ are weak H\"older continous functions of $E\in\R$.

\subsection{Remarks on Positivity of Lyapunov exponents}
Positivity of LE for Schr\"odinger cocycle is closely related to the spectral properties of the corresponding Schr\"odinger operators. For instance, by Ishii-Pastur-Kotani \cite{ishii, pastur,kotani2}, for general bounded ergodic potential, positivity of LE for almost every energy is equivalent to the absence of absolutely continuous spectrum for almost every phase.

Moreover, positivity of LE for all energies is closely related to the Anderson Localization phenomenon. In fact, for the type of potentials considered in Theorem~\ref{t.main}, Anderson Localization has been established by Sinai and  Fr\"ohlich-Spencer-Wittwer \cite{sinai,frohlich}. Note in \cite{frohlich}, the authors also assumed that the potentials are even functions. These authors developed some inductive multi-scale procedures to get exponentially decaying eigenstates. One could extract a similar result as Theorem~\ref{t.main}, that is, $L(E,\l)>\frac12\log\l$ for all $E\in\R$,  from the proofs in \cite{sinai,frohlich}. Very recently, Bjerkl\"ov also obtain among other things a similar result, $L(E,\l)>\frac23\log\l$ for all $E\in\R$, via his approach, see \cite{bjerklov1}.

Clearly, the estimate (\ref{lower-bound-LE}) obtained in this paper is stronger. Combined with some additional arguments, it actually leads to a version of Large deviation theorem (LDT for short) that is crucial for the proof of Theorem~\ref{t.continuity}, which is the first result of this kind. See Section 1.3 for the further remarks.

On the other hand, positivity of LE for Schr\"odinger cocycles, or more generally, $\mathrm{SL}(2,\mathbb R)$ cocycles, is one of the central topics in dynamical systems. Thus, it has been extensively studied by both dynamicists and mathematical physicists. For different base dynamics, both the mechanisms and phenomena are very different. Let us list some of the related results.

For the i.i.d. potentials, Furstenberg \cite{furstenberg} showed that, among other things, LE is uniformly positive for all energies. For ergodic potentials Kotani \cite{kotani} showed that LE is positive for almost every energy if the potential is non-deterministic. Moreover, Kotani \cite{kotani} showed that ergodic potential taking finitely many values is non-deterministic if it is aperiodic, hence the corresponding LE is positive for a.e. $E$. Based on this result of Kotani, together with some new interesting ingredients, Avila-Damanik \cite{aviladamanik} showed that for generic continuous potentials defined on compact metric spaces, if the ergodic measure of the base dynamics is non-atomic, then LE is positive for almost every energy. For doubling map on the unit circle or Anosov diffeomorphism on two dimensional torus, see Chulaevsky-Spencer and Bourgain-Schlag \cite{chulaevskyspencer, bourgainschlag}. For skew shifts, see Bourgain-Goldstein-Schalg, Bourgain, and Kr\"uger \cite{bourgaingoldsteinschlag, bourgain2, bourgain3, kruger1, kruger2}. For limit periodic potentials, of which the base dynamics is minimal translations on Cantor group, see Avila \cite{avila2}. Let us remark also that Avila \cite{avila} showed that positivity of LE is a dense phenomenon on any suitable base dynamics and in any usual regularity classes.

The most intensively studied cases are quasiperiodic potentials. For real analytic potentials, the first breakthrough is due to Herman \cite{herman}. By subharmonicity, among other things, the author showed that LE is uniformly positive for trigonometric polynomials. These techniques have been further developed by Sorets-Spencer \cite{sorets} for arbitrary one-frequency nonconstant real analytic potentials and for large disorders. Same results for \emph{Diophantine} multi-frequency were established by Bourgain-Schlag \cite{bourgaingoldstein} and Goldstein-Schalg \cite{goldstein}. Bourgain \cite{bourgain3} obtained the same results for any rational independent multi-frequency. Based on new results in \cite{avila3}, Zhang \cite{zhang} gave a different proof of the \cite{sorets} results. He also applied it to a certain class of analytic Szeg\H o cocycle and obtained the uniform positivity of the associated LE.
%For almost Mathieu operators, see Bourgain and Jitomirskaya \cite{bourgainjitomirskaya}.

All results mentioned in the above paragraph do not require the \emph{Diophantine} type of conditions for frequency since one
has subharmonicity. For a class of Gevrey potentials and strong \emph{Diophantine} frequencies, see Klein \cite{klein}. Eliasson \cite{eliasson} also gets some related results for a certain class of Gevrey potentials and for some strong \emph{Diophantine} frequencies. For smooth potentials, it seems a complicated induction and some \emph{Diophantine} type of conditions are necessary to take care of the small divisor type of problems. Other than works in \cite{frohlich, sinai}, some recent works can be found in \cite{bjerklov, chan} for more general smooth potentials. In \cite{bjerklov}, the author used techniques that are close in spirit to \cite{benedickscarleson} and a positive measure of frequencies and energies are excluded. In \cite{chan}, the author used multi-scale analysis, and uniform positivity of LE for some $C^3$ potentials is obtained by excluding a positive measure of frequencies and by varying the potentials in some typical way.

The method used in this paper is of purely dynamical systems, which is from Young \cite{young} and close also in spirit to Benedicks-Carleson \cite{benedickscarleson}. The techniques in \cite{young} have been applied to Schr\"odinger cocycles by Zhang \cite{zhang}, and some positivity results for general smooth potentials and for fixed \emph{Brjuno} frequencies have been obtained. Roughly speaking, these techniques based on some detailed analysis of asymptotic stable and unstable directions. The key idea is to classify the ways that they intersect with each other. Then, one need to develop some induction schemes to show that these ways are all the possibilities of intersection between them.

In those cases considered in \cite{young, zhang}, one again needs to exclude a positive measure of energies to get the nonresonance condition. Then it is showed that under the nonresonance condition, the $n$-step stable and $n$-step unstable directions always intersect in a transversal way, which makes the induction easier. And in this case, a $C^1$ type of estimates of the asymptotic stable and unstable direction is sufficient. The nonresonance condition also makes sure that for the survived parameters, the dynamical systems are \emph{nonuniformly hyperbolic} ($\CN\CU\CH$ for short). This is due to the fact that the intersection between asymptotic stable and unstable directions persist in larger and larger time scale, which eventually implies the intersection of stable and unstable directions, hence, $\CN\CU\CH$. Back to the model in Theorem~\ref{t.main}, while the statement of Theorem~\ref{t.main} does not necessarily distinguish energies between the spectrum and the resolvent set, we actually have the following Corollary of \cite[Theorem B$'$]{zhang}.

\begin{corollary}\label{c.large-spectrum}
Let $\a$ and $v$ be as in Theorem~\ref{t.main}. Then for each $\l>\l_0$, there exists a $\Omega_{\a,\l v}\subset\Sigma_{\a,\l v}$ such that
$$
\lim_{\l\rightarrow\infty}\frac{\mathrm{Leb}(\Omega_{\a,\l v})}{\l(\sup v-\inf v)}=1,
$$
and for each $E\in\Omega_{\a,\l v}$, there exists some $x\in\R/\Z$ such that the eigenvalue equation $H_{\a,\l v,x}u=Eu$ admits some exponentially decaying eigenvectors.
\end{corollary}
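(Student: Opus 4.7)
The proof derives the corollary from \cite[Theorem~B$'$]{zhang} combined with the uniform positivity supplied by Theorem~\ref{t.main}. Recast in the present notation, \cite[Theorem~B$'$]{zhang} asserts that, given a Brjuno rotation $\a$ and an $\mathrm{SL}(2,\R)$ cocycle $(\a,A^{(E-\l v)})$ with uniformly positive LE, there is a ``nonresonant'' set of energies $\CN_{\a,\l v}\subset\R$ whose complement has Lebesgue measure $O(\l^{-s})$ for some $s=s(\a,v)>0$, such that for each $E\in\CN_{\a,\l v}$ the asymptotic stable and unstable directions $e^{s}(x),e^{u}(x)$ of the cocycle coincide at some phase $x_0=x_0(E)\in\R/\Z$. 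The common direction at $x_0$ then carries a non-zero vector lying in both the forward- and backward-stable manifolds; together with positivity of LE, this yields a bi-infinite exponentially decaying solution of $H_{\a,\l v,x_0}u=Eu$.

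The first step of the proof is to apply Theorem~\ref{t.main} (with, say, $\e=1/2$) to secure $L(E,\l)>0$ uniformly in $E\in\R$ for all $\l>\l_0$, feeding the positivity hypothesis of Theorem~B$'$. Setting $\Omega_{\a,\l v}:=\CN_{\a,\l v}\cap\Sigma_{\a,\l v}$ then delivers the second assertion of the corollary immediately.

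The remaining and principal obstacle is the measure asymptotic. The upper bound $\mathrm{Leb}(\Omega_{\a,\l v})\le\l(\sup v-\inf v)+4$ is immediate from (\ref{interval-contains-spectrum}). For the lower bound, since the resonant set $\R\setminus\CN_{\a,\l v}$ contributes only $O(\l^{-s})$, it suffices to establish the spectrum estimate $\mathrm{Leb}(\Sigma_{\a,\l v})\ge\l(\sup v-\inf v)-o(\l)$ as $\l\to\infty$. I would prove this via Thouless' formula (\ref{thouless}) combined with the large-coupling asymptotic
\begin{equation*}
L(E,\l)=\log\l+\int_{\R/\Z}\log|E/\l-v(y)|\,dy+o(1),
\end{equation*}
uniform on $[\l\inf v-2,\l\sup v+2]$ by a standard perturbative analysis of the transfer matrices. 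This identifies the logarithmic potential of the pushforward of the IDS under $E\mapsto E/\l$ with that of $v_{*}\mathrm{Leb}$ up to $o(1)$, hence the two measures are close in the weak topology. Since $v$ has exactly two non-degenerate extrema, $v_{*}\mathrm{Leb}$ is absolutely continuous with strictly positive density on the interior $(\inf v,\sup v)$, so the support of $N$ must cover $[\l\inf v,\l\sup v]$ up to a set of Lebesgue measure $o(\l)$, which is the required lower bound.
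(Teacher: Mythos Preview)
Your route is more elaborate than needed, and the extra step contains a real gap.

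The unnecessary part: once \cite[Theorem~B$'$]{zhang} hands you the nonresonant set $\CN_{\a,\l v}$ on which the stable and unstable directions coincide at some phase, you already have $\CN_{\a,\l v}\subset\Sigma_{\a,\l v}$. Coincidence of $e^s(x_0)$ and $e^u(x_0)$ means the cocycle is not uniformly hyperbolic, and Johnson's theorem (invoked twice in the paper) identifies the $\CU\CH$ region with the resolvent set. Hence $\Omega_{\a,\l v}=\CN_{\a,\l v}$, and the measure asymptotic follows directly from the bound on the resonant set that Theorem~B$'$ already supplies. There is no need to bound $\mathrm{Leb}(\Sigma_{\a,\l v})$ from below by a separate argument. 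Indeed, the paper presents the corollary as a consequence of \cite[Theorem~B$'$]{zhang} alone---Theorem~\ref{t.main} is not invoked---and the remark following it notes that the Diophantine condition can even be relaxed to Brjuno, which is precisely the setting of \cite{zhang} with its own built-in positivity on the nonresonant set.

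The gap: your Thouless argument for $\mathrm{Leb}(\Sigma_{\a,\l v})\ge\l(\sup v-\inf v)-o(\l)$ does not close. Matching logarithmic potentials up to $o(1)$ yields only weak convergence of the rescaled IDS measure to $v_*\mathrm{Leb}$, and weak convergence of probability measures does not control the Lebesgue measure of their supports: a sequence of purely atomic measures can converge weakly to Lebesgue measure on an interval while every term has support of measure zero. To convert closeness of potentials into a lower bound on $\mathrm{Leb}(\mathrm{supp}\,dN)$ you would need quantitative regularity of $dN$ itself (a density lower bound), which is essentially the statement you are trying to prove. The dynamical route via $\CN_{\a,\l v}\subset\Sigma_{\a,\l v}$ sidesteps this entirely.
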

\begin{remark}
The last statement of Corollary~\ref{c.large-spectrum} implies that $|u_n|<Ce^{-L|n|}$ for all $n\in\Z$ for some constant $C,L>0$, which is kind of Anderson Localization phenomenon. Also, in Corollary~\ref{c.large-spectrum}, we can actually relax the \emph{Diophantine} condition to the \emph{Brjuno} condition, see \cite[Theorem B$'$]{zhang}.
\end{remark}
In this paper, we will not exclude any parameter. Thus, the main difficulty of the cases considered in this paper is the occurrence of `resonance'. This leads to bifurcation of the way $n$-step stable direction intersecting with $n$-step unstable direction: our analysis shows that `resonance' leads to some tangential way of intersection or even separation of $n$-step stable and unstable directions, which leads to \emph{uniformly hyperbolic} ($\CU\CH$) systems, see figure (d)--(f). In fact, to start with, one encounters with tangential intersections of first step stable and unstable directions. Thus one needs some nondegenerate conditions to get start. And a new induction scheme that includes both `nonresonance' and `resonance' cases needs to be introduced. Moreover, to deal with the tangential type of intersection, a $C^2$ type of estimate of the asymptotic stable and unstable directions is required.

\subsection{Remarks on Regularity of Lyapunov exponents}
Much work has been devoted to the regularity properties of Lyapunov exponents (LE) and integrated density of states (IDS). By the discussion following (\ref{thouless}), we focus on the regularity of LE here.

On the regularity of LE for real analytic quasi-periodic potentials, a series of positive results have been obtained in the 2000s. It starts with the work of Goldstein-Schlag \cite{goldstein} where they obtained some sharp version of large deviation theorems (LDT) for real analytic potentials with strong \emph{Diophantine} frequency, developed a powerful tool, the Avalanche Principle, and proved H\"older or weak H\"older continuity of $L(E)$ in the regime of positive LE. Notice that LDT for real analytic potentials with \emph{Diophantine} frequency was first established in \cite{bourgaingoldstein} in order to get Anderson Localization.  This also illustrates the power and importance of LDT.

Avalanche Principle involves only long finite products of matrices, see Section~\ref{sec-continuity}.2. Thus, the key to apply the method in \cite{goldstein} to other cases is to establish LDT. For other type of base dynamics, Bourgain-Goldstein-Schalg \cite{bourgaingoldsteinschlag} obtained the results for skewshift base dynamics, Bourgain-Schlag \cite{bourgainschlag} for doubling map and Anosov diffeomorphism. For lower regularity case, \cite{klein} got the results for a class of Gevrey potentials. These results concern continuity with respect to energies. For wider class of cocycle maps, Jitomirskaya-Koslover-Schulteis \cite{JKS1} get the continuity of LE for a class of analytic quasiperiodic $\mathrm{M}(2, \C)$ cocycles which is applicable to general quasi-periodic Jacobi matrices or orthogonal polynomials on the unit circle in various parameters. Jitomirskaya-Marx \cite{JM1} later extended it to all (including singular) $\mathrm{M}(2,\C)$ cocycles. H\"older continuity for $\mathrm{GL}(d,\C)$ cocycles, $d\ge 2$,  was recently obtained in Schlag \cite{schlag} and Duarte-Klein \cite{duarteklein}. All the results stated above, except strongly mixing cases, require \emph{Diophantine} condition.

An arithmetic version of large deviations and an arithmetic inductive scheme were developed in \cite{bourgainjitomirskaya} allowing to obtain joint continuity of LE for $\mathrm{SL}(2,\C)$ cocycles, in frequency and cocycle map, at any irrational frequencies. This result has been crucial in later proofs of the Ten Martini problem \cite{AJ}, Avila's global theory of one-frequency cocycles \cite{avila3, avila3.2}, and other important developments. It was extended to multi-frequency case by Bourgain \cite{bourgain3} and to general $\mathrm{M}(2,\C)$ case in \cite{JM2}. More recently, a completely different proof, not using LDT or Avalanche principle, and extending to the general $\mathrm{M}(d,\C)$, $d\ge 2$, case was developed in Avila-Jitomirskaya-Sadel \cite{avilajitosadel}. All these results however rely heavily on analyticity of the cocycle map.

Thus, Theorem~\ref{t.continuity} in this paper is striking in the sense that it provides the first positive result on the continuity of LE and weak H\"older continuity of IDS on $E$ (note log-H\"older continuity of IDS on $E$ holds for general ergodic bounded potentials, see \cite{craigsimon}) for $C^r$, $r\le\infty$, quasi-periodic potentials. More concretely, surprisingly, it turns out that some version of LDT follows naturally from our induction scheme, see Section~\ref{sec-continuity} for details. Thus, combined with the Avalanche Principle, Theorem \ref{t.continuity} follows essentially from the same argument as in \cite{goldstein}.

We remark that Theorem \ref{t.continuity} has an analog as in \cite{JKS1}, that is, we can prove continuity of LE with respect to the $C^2$ $cos$-type of potentials.

For other related results, Avila-Krikorian \cite{avilakrikorian} recently studied so-called monotonic cocycles which are a class of smooth or analytic cocycles non-homotopic to constant. They proved that the LE is smooth or even analytic, respectively. In comparison, the regularity of LE cannot be better (as far as the modulus of continuity is concerned) than $1/2$-H\"older continuous for cocycles homotopic to constant which automatically includes the category of Schr\"odinger cocycles. However, Avila \cite{avila3} recently showed that if one stratifies the energies or some real analytic family of real analytic potentials in some natural way, then the LE is in fact real analytic.

There are many negative results on the positivity and continuity of LE for non-analytic cases. It is well known that in $C^0$-topology, discontinuity of LE holds true at every non-uniformly hyperbolic cocycle, see \cite{Furman,Knill,Th}. Moreover, motivated by Ma$\tilde{n}\acute{e}$ \cite{Mane1,Mane2}, Bochi \cite{Bochi1,Bochi2} proved that with an ergodic base system, any non-uniformly hyperbolic $\mathrm{SL}(2,\R)$-cocycle can be approximated by cocycles with zero LE in the $C^0$ topology.

Based also on the the method of Young\cite{young}, Wang-You \cite{youwang1} constructed examples to show that LE can be discontinuous even in the space of $C^{\infty}$ Schr\"odinger cocycles. Recently, Wang-You \cite{youwang2} has improved the result in \cite{youwang1} by showing that in $C^r$ topology, $1\le r\le\infty$, there exists Schr\"odinger cocycles with positive LE that can be approximated by ones with zero LE. The example in \cite{youwang2} also showed that the nondegenerate condition for the potential in Theorem \ref{t.main} and \ref{t.continuity} is necessary for positivity and continuity of LE. Jitomirskaya-Marx \cite{JM2} constructed examples showing that LE of $\mathrm{M}(2,\C)$ cocycles is discontinuous in $C^\infty$ topology.

Finally, let us remark that the continuity of LE for Schr\"odinger cocycles is also expected to play important roles in  studying Cantor spectrum, typical localization length, phase transition, etc, for quasi-periodic Schr\"odinger operators.

\subsection{Generalization and Further Developments}
Though Theorem~\ref{t.main} is our primary interest, our method is not restricted to Schr\"odinger cocycles. What we proved is actually a more general version concerning smooth quasiperiodic $\mathrm{SL}(2,\R)$ cocycles, see Corollary~\ref{c.general} of Appendix Section~\ref{application}. In particular, we obtain the following corollary of Corollary~\ref{c.general} and \cite[Theorem B$'$]{zhang}. We say $v\in C^2(\R/\Z,\R)$ has a unique maximal point if the set $\{x: v(x)=\max_{y\in\R/\Z} v(y)\}$ consists of a single point (for simplicity, we state only the maximal point case. The minimal point case can be stated similarly). Then we have the following corollary.

\begin{corollary}\label{c.partial}
Let $\a$ be as in Theorem~\ref{t.main}. Assume $v\in C^2(\R/\Z,\R)$ has a unique nondegenerate maximal point which is denoted by $x_0$. Then there exists a $r>0$ such that for each $\e>0$, there exists a $\l_0=\l_0(\a,v,\e,r)$ such that for all $(E,\l)\in \l [v(x_0)-r,v(x_0)+r]\times (\l_0,\infty)$,
$$
L(E,\l)>(1-\e)\log\l.
$$
Moreover, for any fixed $\l>\l_0$ and for all $E,E'\in \l [v(x_0)-r,v(x_0)+r]$, it holds that
$$
|L(E)-L(E')|+|N(E)-N(E')|<Ce^{-c(\log|E-E'|^{-1})^\sigma},
$$
where $c,C>0$ depends on $v,\a,\e,r,\l$, and $0<\sigma<1$ on $\a$. Finally, we have
$$
\lim_{\l\rightarrow\infty}\frac{1}{\lambda r}\mathrm{Leb}\left\{\Sigma_{\alpha,\lambda v}\cap \l [v(x_0)-r,v(x_0)]\right\}=1.
$$
\end{corollary}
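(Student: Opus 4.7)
The plan is to reduce Corollary~\ref{c.partial} to Corollary~\ref{c.general} of the Appendix together with \cite[Theorem B$'$]{zhang}, by localizing the analysis of the Schr\"odinger cocycle to a small arc around the unique maximum $x_0$.

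First, I would choose $r>0$ small enough that the sublevel arc $U=\{x\in\R/\Z: v(x)>v(x_0)-3r\}$ is a single open interval around $x_0$ on which $v''$ does not vanish; this is possible because $x_0$ is the unique nondegenerate maximum of the $C^2$ function $v$. For every $E\in\lambda[v(x_0)-r,v(x_0)+r]$ the zeros of $E-\lambda v(\cdot)$ lie inside $U$, where $v$ has the local structure of a nondegenerate quadratic maximum. Outside $U$, $|E-\lambda v(x)|$ is of order $\lambda$, so each matrix $A^{(E-\lambda v)}(x)$ is strongly hyperbolic as a matrix, with eigenvalue ratio of order $\lambda^{2}$. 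This is exactly the local/global structure that Corollary~\ref{c.general} in the Appendix is designed to exploit: local nondegeneracy of the zero set of the ``trace function'' combined with strong hyperbolic expansion elsewhere, both of which are verified here. Applying Corollary~\ref{c.general} then yields the uniform bound $L(E,\lambda)>(1-\varepsilon)\log\lambda$ together with the weak H\"older estimate for $L$ on the prescribed window of energies.

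The corresponding weak H\"older estimate for $N(E)$ follows immediately from Thouless's formula (\ref{thouless}) together with the fact, noted after (\ref{thouless}), that the Hilbert transform preserves the log-type weak H\"older continuity appearing in (\ref{log-holder}). For the final density statement on the spectrum, I would invoke \cite[Theorem B$'$]{zhang}, using the uniform positivity of LE established above as input. That theorem produces, for a set of energies of asymptotically full density inside the interval, some phase $x$ for which the eigenvalue equation admits an exponentially decaying solution, which forces those energies into $\Sigma_{\alpha,\lambda v}$. Combining this with the observation that $\lambda v(x_0)$ is the upper spectral edge and that the IDS assigns mass approximately equal to $r$ to the interval $[\lambda(v(x_0)-r),\lambda v(x_0)]$ (since $v$ is $C^{2}$ with a nondegenerate maximum at $x_0$), one concludes $\mathrm{Leb}(\Sigma_{\alpha,\lambda v}\cap \lambda[v(x_0)-r,v(x_0)])/(\lambda r)\to 1$ as $\lambda\to\infty$.

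The main obstacle is verifying the hypotheses of Corollary~\ref{c.general}. Although the zeros of $E-\lambda v$ are all contained in $U$, the base rotation $x\mapsto x+\alpha$ visits the entire circle, so one must rule out any new ``resonance'' arising from the interplay between the Diophantine returns to $U$ and the already-hyperbolic behavior outside $U$. In other words, a genuinely global argument on asymptotic stable and unstable directions is required, not merely a patchwise one. This is the technical content of Corollary~\ref{c.general}, whose proof relies on the $C^{2}$ analysis of stable and unstable directions developed in the body of the paper; that analysis depends only on nondegeneracy at the single active turning point $x_0$, so it carries over from Theorem~\ref{t.main} to the present local setting without requiring a second (minimum) critical point anywhere on $\R/\Z$.
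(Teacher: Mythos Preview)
Your approach is essentially the paper's own: the paper states that Corollary~\ref{c.partial} ``is a direct consequence of Lemma~\ref{l.reduce-form} and Corollary~\ref{c.general}'', with the final density statement coming from \cite[Theorem~B$'$]{zhang}. You have correctly identified Corollary~\ref{c.general} and \cite[Theorem~B$'$]{zhang} as the two inputs and given the right reason why the hypotheses of Corollary~\ref{c.general} are met (unique nondegenerate maximum forces at most two solutions of $v(x)=t$ for $t$ near $v(x_0)$, with the required type~$\I$/$\II$ local structure).

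Two small points. First, Corollary~\ref{c.general} is stated for cocycles already in the polar form $\Lambda(x)R_{\psi(x,t)}$, so you should invoke Lemma~\ref{l.reduce-form} explicitly to pass from $A^{(E-\lambda v)}$ to that form; your description of the hypotheses as ``strong hyperbolic expansion elsewhere'' is slightly off, since after polar decomposition one has $\lambda(x)>\lambda$ \emph{everywhere}, and the critical set enters only through $\psi(x,t)=\tfrac{\pi}{2}$, i.e., $v(x)=t$. Second, the remark about the IDS assigning mass approximately $r$ to the window is unnecessary for the density conclusion: \cite[Theorem~B$'$]{zhang} already yields a subset of $\Sigma_{\alpha,\lambda v}$ of relative measure tending to $1$ inside $\lambda[v(x_0)-r,v(x_0)]$, which is exactly the asserted limit.
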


In other words, the LE is positive and continuous for all energies around the unique non-degenerate extremals of potentials for large disorders. This corollary says that, in some sense, the positivity of LE is a local property with respect to the initial `critical interval' of the potential. Corollary~\ref{c.general} can also be applied to a certain class of quasiperiodic Szeg\H o cocycles, see Corollary~\ref{c.szego} of Section~\ref{application}. For details and other applications, see Section~\ref{application}.

To sum up, we believe that our method may have the following further development. Firstly, although the computation will be much more complicated, it is possible that our techniques can be used to analyze more general smooth potentials. For instance, instead of $C^2$ estimate of  Lemma~\ref{l.nstep-deri-estimate}, we may need $C^r$ for $r<\infty$. Moreover, we may need to deal with the new types of resonance, e.g. resonance between the type $\I$ and type $\II$ functions of Definition~\ref{d.type-of-functions}.

Secondly, since our method is based on a detailed analysis of asymptotic stable and unstable directions, it has the advantage in showing the occurrence of $\CU\CH$, see, for example, Remark~\ref{r.uh-stopping}. Hence, it is possible to show Cantor spectrum for the type of potentials in Theorem~\ref{t.main}, or even for more possible potentials. We will come back to this topic elsewhere.

Thirdly, it is possible to relax \emph{Diophantine} condition to \emph{Brjuno} or even weak \emph{Liouville} conditions in Theorem~\ref{t.main} and \ref{t.continuity}. Moreover, it is also possible to improve the index $\sigma$ in (\ref{log-holder}) to $1$ which is nothing other than the H\"older continuity. We do not pursue these goals here in order to keep this paper to a reasonable length.

Finally, the idea of analyzing the asymptotic stable and unstable directions is probably not restricted to one-frequency quasiperiodic case. These techniques are also considered to be promising in \cite{avila3, avilakrikorian}.

\subsection{Structure of the Paper and Acknowledgements}
The structure of the remaining part of this papers is as follows. In Section~\ref{preliminaries}, we state a series of technical lemmas. We first reduce the Schr\"odinger cocycles to its polar decomposition form so that we can get started with our induction. Then we state the series of Lemmas that will be used to control the derivatives of asymptotic stable and unstable directions and the norms the iteration of cocycles, and concatenation of sequence of matrix-maps. Then, we classify the types of functions that will be used to describe all possible ways the $n$-step stable direction intersecting with $n$-step unstable direction. Finally, we state and prove a easy corollary which actually builds the bridge of concatenation of sequence of matrix-maps and our classification of the intersection between asymptotic stable and unstable directions . The proof of our induction and Theorem~\ref{t.main} and \ref{t.continuity} are just some repeated applications of these lemmas.

In Section~\ref{getting-started}, we will get started with our induction. We will start with step $1$ and move one step forward to step $2$. So we get to know all possible cases that will occur in our induction. In Section~\ref{induction}, we state and prove our induction. In Section~\ref{sec-positivity}, we prove theorem \ref{t.main} by induction. In Section~\ref{sec-continuity}, we first show a version of LDT. Then, we prove Theorem~\ref{t.continuity}. In appendix Section~\ref{a}, we prove Lemma~\ref{l.reduce-form}--\ref{l.s-deri-resonance} that are given in Section~\ref{preliminaries}. In Section~\ref{application}, we state a more general version of Theorem~\ref{t.main} and \ref{t.continuity}, and give some applications.

\vskip0.4cm
\noindent{\bf\textit{Acknowledgments.}} Z.Z. would like to thank his advisors Artur Avila and Amie Wilkinson for suggesting the project of uniformly positive Lyapunov exponents for smooth potentials, for some helpful discussions, and for their encouragement and continuous support. He also would like to thank Vadim Kaloshin for suggesting this project, for some helpful discussion, and for showing a note joint with Anton Gorodetski that gives some helpful hints. We are grateful to Michael Goldstein for some helpful discussion and for showing us a manuscript of him which gives us some positive hints. It is our pleasure to thank Jiangong You for some helpful discussions. We also owe our thanks to Svetlana Jitomirskaya for detailed comments and suggestions.

\section{{\bf Preliminaries}}\label{preliminaries}

From now on, if not stated otherwise, let $C,\ c$ be some universal positive constants depending only on $v$ and $\a$, where $C$ is large and $c$ small. Let $\frac{p_s}{q_s}$ be the $s$th continued fraction approximants of frequency $\alpha$. Then it is a standard result that $\a\in DC_\tau$ if and only if there is some $c>0$ such that $q_{s+1}<cq_{s}^{\tau-1}$ for all $s\geq1$. We will sometimes use this equivalent condition. Finally, for two positive real number $a,b>0$, by $a \gg b $ or $b\ll a$, we mean that $a$ is sufficiently larger than $b$.

For $\theta\in \R/(2\pi \Z)$, let
$$
R_\t=\begin{pmatrix}\cos\t&-\sin\t\\ \sin\t&\cos\t\end{pmatrix}\in\mathrm{SO}(2,\mathbb R).
$$
 Then, instead of proving Theorem~\ref{t.main} directly, we will use the following equivalent  form of the cocycle map (\ref{schrodinger-cocycle-map}).
\begin{lemma}\label{l.reduce-form}
 Let $\CI\subset\R$ be any compact interval. For $x\in\mathbb R/\mathbb Z$ and $t\in\CI$, define  the following cocycles map
\beq\label{polar-decom}
A(x)=\Lambda(x)\cdot R_{\phi(x,t)}:=\begin{pmatrix}\l(x)&0\\0&\l^{-1}(x)\end{pmatrix}\cdot\begin{pmatrix}\frac{t-v(x)}{\sqrt{(t-v(x))^2+1}}&\frac{-1}{\sqrt{(t-v(x))^2+1}}\\
\frac{1}{\sqrt{(t-v(x))^2+1}}&\frac{t-v(x)}{\sqrt{(t-v(x))^2+1}}\end{pmatrix},
\eeq
where $\cot\phi(x,t)=t-v(x)$. Assume
\beq\label{norm1-deri-control}
\l(x)>\l,\ \left|\frac{d^m\l(x)}{dx^m}\right|<C\l,\ m=1,2.
\eeq
Then to prove Theorem~\ref{t.main}, it is enough to prove the corresponding results for (\ref{polar-decom}).
\end{lemma}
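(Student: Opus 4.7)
The plan is to use the $C^0$-gauge invariance of the Lyapunov exponent: for any continuous $S:\R/\Z\to\mathrm{SL}(2,\R)$, the conjugated cocycle $B(x):=S(x+\a)\,A^{(E-\l v)}(x)\,S(x)^{-1}$ satisfies $L(E,\l)=L(B)$. I will therefore construct a $C^2$ gauge $S$ putting the Schr\"odinger cocycle into the polar shape $\Lambda(x)R_{\phi(x,t)}$ of (\ref{polar-decom}), with $t:=E/\l$ and with $\l(x)$ obeying (\ref{norm1-deri-control}); this reduces Theorem~\ref{t.main} to the same conclusion for cocycles of the form (\ref{polar-decom}).

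First I would compactify the parameter range: by (\ref{interval-contains-spectrum}) together with the UH characterization of the resolvent set recalled after Theorem~\ref{t.continuity}, the cocycle is already $\CU\CH$ (with the lower bound (\ref{lower-bound-LE}) then being immediate) whenever $E\notin[\l\inf v-2,\l\sup v+2]$, so it suffices to treat $t=E/\l$ in the compact interval $\CI:=[\inf v-2/\l,\sup v+2/\l]$. Second, I would construct $S$ from the explicit geometry of $A^{(\l(t-v))}(x)$. The first column $(t-v,1)^T/\sqrt{(t-v)^2+1}$ of $R_{\phi(x,t)}$ is, up to the factor $\l$, precisely the direction of $A^{(\l(t-v))}(x)\cdot e_1=(\l(t-v),1)^T$, so the matrix $D(x):=A^{(\l(t-v))}(x)R_{\phi(x,t)}^{-1}$ has its first column pointing along the positive $e_1$-axis after a short computation, modulo an overall normalization. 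A diagonal/shear gauge $S$ on top, designed to cancel the remaining off-diagonal entries of $D(x)$ consistently across successive times $x$ and $x+\a$, then conjugates the cocycle into the form $\Lambda(x)R_{\phi(x,t)}$ with $\l(x)$ equal, up to a harmless $O(1)$ factor, to $\l\sqrt{(t-v(x))^2+1}$. The inequality $\l(x)\geq\l$ follows immediately from $\sqrt{(t-v)^2+1}\geq 1$, and the strict version $\l(x)>\l$ can be enforced by a small overall constant rescaling of $S$ that is absorbed into the $\e$ in (\ref{lower-bound-LE}).

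The main obstacle is verifying the derivative bounds in (\ref{norm1-deri-control}). Differentiating $\l(x)\sim\l\sqrt{(t-v(x))^2+1}$ once yields $\l'(x)=-\l(t-v(x))v'(x)/\sqrt{(t-v(x))^2+1}$, which is uniformly bounded by $\l\|v'\|_{C^0}\leq C\l$ thanks to $|t-v|/\sqrt{(t-v)^2+1}\leq 1$; an analogous computation using $v\in C^2$ gives $|\l''(x)|<C\l$. The delicate point is that the shear part of the gauge $S$ must be chosen so that its first two derivatives in $x$ do not introduce any inverse power $((t-v)^2+1)^{-k/2}$ with $k\geq 1$, which would blow up at resonances $t=v(x)$ and destroy the bound; the explicit form of the shear—prescribed by the entries of $D(x)$ above—is designed precisely to achieve the needed cancellations, and the required $C^2$ estimates on $\l(x)$ then follow by elementary calculus. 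With the gauge in place and the bounds established, the lemma follows from gauge invariance of $L$.
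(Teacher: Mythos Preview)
Your proposal contains a genuine gap. The explicit claim about $D(x):=A^{(\l(t-v))}(x)R_{\phi(x,t)}^{-1}$ is incorrect: a direct computation gives
\[
D(x)=\frac{1}{\sqrt{(t-v)^2+1}}\begin{pmatrix}\l(t-v)^2+1 & (\l-1)(t-v)\\ t-v & 1\end{pmatrix},
\]
whose first column is not along $e_1$ and which is far from diagonal. More seriously, your plan to ``cancel the remaining off-diagonal entries of $D(x)$ consistently across successive times $x$ and $x+\a$'' by a diagonal/shear $S$ is a cohomological equation $S(x+\a)D(x)S(x)^{-1}=\Lambda(x)$, not an algebraic identity, and there is no reason it admits a $C^2$ (or even continuous) solution. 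In fact no conjugation produces the \emph{exact} form (\ref{polar-decom}).

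The paper's argument is structurally different and this is the point you are missing. One first conjugates by the constant $T=\mathrm{diag}(\l^{-1/2},\l^{1/2})$; this is essential because the original Schr\"odinger matrix hits $\mathrm{SO}(2,\R)$ at $t=v(x)$, where polar decomposition becomes discontinuous, whereas $TAT^{-1}$ has norm $\ge c\l$ uniformly. One then conjugates by the rotations $U_1U_2$ from the polar decomposition of $TAT^{-1}$, obtaining a cocycle of the form $\Lambda(x)R_{\theta(x,t,\l,\a)}$ with $\l(x)=\|TAT^{-1}\|$ (which immediately gives (\ref{norm1-deri-control}) by explicit computation). The angle $\theta$ is \emph{not} equal to $\phi$; the key step is an explicit calculation showing that its cosine converges in $C^2(\R/\Z\times\CI)$ to $\frac{t-v(x-\a)}{\sqrt{(t-v(x-\a))^2+1}}$ as $\l\to\infty$. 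The reduction then holds not because one lands on (\ref{polar-decom}) exactly, but because the entire induction (Lemma~\ref{l.starting}, Theorem~\ref{t.iteration}) uses only the $C^2$ shape of the rotation part, so any cocycle $\Lambda R_\theta$ with $\theta$ sufficiently $C^2$-close to $\phi$ yields the same conclusion. Your proposal never addresses this approximation-and-stability step, and without it the reduction cannot be completed.
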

The proof of Lemma~\ref{l.reduce-form} will be given in Section~\ref{a.1}. From now on, $A$ will denote the cocycle map in (\ref{polar-decom}).

The following notations will be used throughout this paper. Let $B(x,r)\subset\R/\Z$ be the ball centered around $x\in\R/\Z$ with radius $r$. For a connected interval $I\subset\R/\Z$ and constant $0<a\le1$, let $aI$ be the subinterval of $I$ with the same center and whose length is $a|I|$. Define the map
$$
s:\mathrm{SL}(2,\R)\rightarrow\R\PP^1=\R/(\pi\Z)
$$
so that $s(A)$ is the most contraction direction of $A\in\mathrm{SL}(2,\R)$. Let $\hat s(A)\in s(A)$ be an unit vector. Thus, $\|A\cdot\hat s(A)\|=\|A\|^{-1}$. Abusing the notation a little, let
$$
u:\mathrm{SL}(2,\R)\rightarrow\R\PP^1=\R/(\pi\Z)
$$
be that $u(A)=s(A^{-1})$. Then for $A\in\mathrm{SL}(2,\R)$, it is clear that
\beq\label{polar-form}
A=R_u\cdot\begin{pmatrix}\|A\| &0\\ 0 &\|A\|^{-1}\end{pmatrix}\cdot R_{\frac\pi2-s},
\eeq
where $s,u\in [0,2\pi)$ are some suitable choices of angles correspond to the directions $s(A),u(A)\in \R/(\pi\Z)$. It can also be deduced from the polar decomposition procedure of $A$, see Section~\ref{a.1}.

The following series of Lemmas will be quite involved in the our induction scheme. Basically, under suitable conditions, they deals with the concatenation of sequence of $\mathrm{SL}(2,\R)$ matrices maps that are defined on small intervals of $\mathbb R/\mathbb Z$. To get exponential growth of norm of the products for larger and larger time scale, on one hand we need to control the geometrical properties of the forward and backward most contraction directions. On the other hand, we also need to control the derivatives of the norms with respect to the phase. And we need to deal with both resonance and nonresonance cases. Roughly speaking, if $\|E_i\|\gg 1$ for $i=1, 2$ and we want to concatenate $E_2\cdot E_1$, then in nonresonance case we have
$$
|s(E_2)-u(E_1)|^{-1}\ll \min\{\|E_1\|,\ \|E_2\|\}.
$$
Otherwise, we are in resonance case. Proofs of Lemmas \ref{l.ess-change}--\ref{l.s-deri-resonance} can be found in Section~\ref{b} and \ref{c}.

Let us start with a lemma that reduce the estimate of the most contraction directions in case of concatenation of two matrices to the estimate of some simple functions.
\begin{lemma}\label{l.ess-change}
Consider the function $s(x)=s[E(x)], u(x)=u[E(x)]:I\rightarrow\mathbb R\mathbb P^1$, where $I\subset\mathbb R\mathbb P^1$ is a connected interval and
$$
E(x):=\begin{pmatrix}e_2(x)&0\\0&e_2^{-1}(x)\end{pmatrix}R_{\t(x)}\begin{pmatrix}e_1(x)&0\\0&e_1^{-1}(x)\end{pmatrix}.
$$

Let $f_1(x)=\frac{1}{2}(e_1^2\cot\t+e_1^2e_2^{-4}\tan\t)$ and $f_2(x)=\frac{1}{2}(e_2^2\cot\t+e_2^2e_1^{-4}\tan\t)$. Then for each $m=0,1,2$ and each $x\in I$, we have the following.
\begin{itemize}
\item If $e_2(x)>e_1(x)\gg1$, then we have
\beq\label{s-e2-great-e1}
c<\left|\frac{d^m s}{dx^m}/\frac{d^m \tan^{-1}(e_1^2\cot\t)}{dx^m}\right|,\ \left|\frac{d^m u}{dx^m}/\frac{d^m\cot^{-1}(\sqrt{f_2^2+1}+f_2)}{dx^m}\right|<C.
\eeq
\item If $e_1(x)>e_2(x)\gg1$, then we have
\beq\label{s-e1-great-e2}
c<\left|\frac{d^m s}{dx^m}/\frac{d^m\tan^{-1}(\sqrt{f_1^2+1}+f_1)}{dx^m}\right|,\ \left|\frac{d^m u}{dx^m}/\frac{d^m \cot^{-1}(e_2^2\cot\t)}{dx^m}\right|<C.
\eeq
\item If $e_1(x)=e_2(x)\gg1$, then we have
\beq\label{s-e1-equal-e2}
c<\left|\frac{d^m s}{dx^m}/\frac{d^m\tan^{-1}(\cot\t\sqrt{e_1^4+\tan^2\t})}{dx^m}\right|<C.
\eeq
\end{itemize}
Moreover in the above we can replace $(s, \tan^{-1})$ by $(u, \cot^{-1})$ to get the estimates for $u$.
\end{lemma}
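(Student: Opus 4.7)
My plan is to derive exact closed-form expressions for $s(x)=s[E(x)]$ and $u(x)=u[E(x)]$ via the spectral theory of $E^TE$ and $EE^T$, and then verify the stated asymptotic ratios by a regime-by-regime analysis. First I would compute the symmetric matrix $E^TE = \begin{pmatrix} A & B \\ B & C \end{pmatrix}$ with $A=e_1^2(e_2^2\cos^2\theta+e_2^{-2}\sin^2\theta)$, $C=e_1^{-2}(e_2^2\sin^2\theta+e_2^{-2}\cos^2\theta)$, $B=(e_2^{-2}-e_2^2)\sin\theta\cos\theta$. Since $E\in\mathrm{SL}(2,\R)$, $\det(E^TE)=1$, so its eigenvalues are $\|E\|^{\pm 2}$, and the standard $2\times 2$ spectral formula gives the exact identity
\[
\tan s(E) = -\frac{p+\sqrt{p^2+q^2}}{q},\qquad p=\tfrac{1}{2}(A-C),\ q=B.
\]
The parallel computation for $EE^T$ produces $\cot u(E)=(p'+\sqrt{(p')^2+(q')^2})/q'$, with $p',q'$ built from the corresponding entries of $EE^T$.

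Next, in each of the three regimes I would extract the leading contributions and simplify. When $e_2>e_1\gg 1$, the common factor $e_2^2$ cancels in $p/q$, leaving $-p/q$ comparable to $\tfrac{1}{2}(e_1^2\cot\theta-e_1^{-2}\tan\theta)$, which in turn is comparable (up to a bounded factor) to $e_1^2\cot\theta$. A short subcase analysis on the size of $|e_1^2\cot\theta|$ then shows that the exact $\tan s$ remains comparable to $e_1^2\cot\theta$ uniformly, giving the $m=0$ bound. The analogous calculation with $EE^T$ yields $p'/q'\sim f_2$, hence $\cot u$ comparable to $f_2+\sqrt{f_2^2+1}$. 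The case $e_1>e_2\gg 1$ reduces to the previous one via the conjugation $E\mapsto R_{-\pi/2}E^{-1}R_{\pi/2}$, which swaps $e_1\leftrightarrow e_2$ and interchanges the roles of $s$ and $u$; the symmetric case $e_1=e_2\gg 1$ is a direct specialization, and the replacement $(s,\tan^{-1})\leftrightarrow (u,\cot^{-1})$ claimed at the end of the lemma follows automatically from these identifications.

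For the derivative bounds with $m=1,2$, I would differentiate both the exact formula and its leading approximation via the chain rule $\frac{d}{dx}\tan^{-1}(y)=y'/(1+y^2)$, and verify that the ratio of derivatives stays in $[c,C]$. The key observation is that the correction terms separating the exact $\tan s$ from its leading approximation (for example, the $e_1^{-2}\tan\theta$ remainder in the case $e_2>e_1\gg 1$) are not only small in magnitude but also have derivatives dominated by the derivative of the leading term, so the comparison passes to higher orders.

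The hard part will be the $m=2$ estimate. Differentiating $\sqrt{p^2+q^2}$ twice introduces prefactors of $1/(p^2+q^2)^{3/2}$ and cross-terms $pp''$, $p(p')^2$ that can conflict with the zeros of the second derivative of the leading approximation. The most delicate situation is the transitional regime where $|p|\sim|q|$ (e.g.\ when $|\tan\theta|$ is comparable to $e_1^2$), in which neither summand in $p$ dominates and the clean leading order is lost; here I would invoke the determinantal identity $AC-B^2=1$ to extract the algebraic cancellations that preserve the lower bound $c>0$ on the ratio of derivatives. The structural fact that both $s$ and its approximation inherit critical points from the same trigonometric factors $\sin\theta,\cos\theta$ should ensure that this matching is consistent, but executing it uniformly across the three regimes and across the transitional subcase is the principal technical burden.
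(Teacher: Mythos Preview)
Your approach is essentially the same as the paper's: compute $E^TE$, extract the exact formula $\tan s=(\sqrt{U^2+w^2}+U)/w$ with $U=\tfrac12(A-C)$ and $w=B$, and then do a regime-by-regime simplification by dropping lower-order terms. The paper writes $U$ in the variables $a=e_1e_2$, $b=e_1/e_2$ and splits the case $e_2>e_1$ into the subcases $U\ge 0$ and $U<0$ (equivalently $|\cot\theta|\gtrless e_1^{-2}$), which is exactly your ``short subcase analysis on the size of $|e_1^2\cot\theta|$''; in both subcases it recovers $\tan s\approx e_1^2\cot\theta$. For $e_1>e_2$ and for $e_1=e_2$ the paper just redoes the computation rather than invoking your conjugation $E\mapsto R_{-\pi/2}E^{-1}R_{\pi/2}$; your symmetry reduction is a clean shortcut, but not a different method. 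One cautionary note: the paper's own argument is deliberately brief on the $m=1,2$ ratio bounds---it derives the $C^0$ approximation and takes the derivative comparability as following from the fact that the discarded terms remain subordinate after differentiation. Your worry about the transitional regime $|p|\sim|q|$ is therefore well placed, but in practice the lemma is only invoked (in Lemmas~\ref{l.1step-deri-estimate}, \ref{l.norm-deri}, \ref{l.s-deri-resonance}) to obtain one-sided upper bounds on $|d^ms/dx^m|$ via the approximating function, so the full two-sided $c<\cdot<C$ control at $m=2$ is not actually exercised downstream.
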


Lemma \ref{l.ess-change} contains information for both resonance and nonresonance case. Let us first consider the nonresonance case. We again start with the concatenation of two matrices.

\begin{lemma}\label{l.1step-deri-estimate}
Let $E(x)$, $e_0=\min\{e_1,e_2\}$ be as in the Lemma~\ref{l.ess-change} and $e_3(x)=\|E(x)\|$. Assume $0<\eta\ll 1$. Suppose that for all $x\in I$, $j,m=1,2$, we have
$$
\left|\frac{d^me_j}{dx^m}(x)\right|< Ce_j^{1+m\eta};\ \left|\frac{d^m\t}{dx^m}\right|<Ce_0^{\eta},\ |\t-\frac\pi2|^{-1}>ce_0^{-\eta}.
$$
Then we have
\beq\label{1step-deri-estimate4}
\left\|s-\frac\pi2\right\|_{C^2}<Ce_1^{-(2-5\eta)},\ \left\|u\right\|_{C^2}<Ce_2^{-(2-5\eta)};
\eeq
\beq\label{1step-deri-estimate5}
\left|\frac{d^me_3}{dx^m}(x)\right|<Ce_3^{1+m\eta}\mbox{ for all } x\in I \mbox{ and }m=1,2.
\eeq
\end{lemma}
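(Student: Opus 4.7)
The proof splits naturally into two largely independent parts: bounding the asymptotic directions $s,u$ to obtain \eqref{1step-deri-estimate4}, and bounding the norm $e_3=\|E\|$ to obtain \eqref{1step-deri-estimate5}. The common ingredient is that the nonresonance hypothesis on $\theta$ controls the small divisor $|\theta-\pi/2|^{-1}$ by $Ce_0^\eta$, so that quantities like $|\tan\theta|$, $|\cot\theta|^{-1}$, $|\cos\theta|^{-1}$ are all bounded by $Ce_0^\eta$. Each such factor that appears in the estimates therefore costs only a small power of $e_0^\eta$, which the slack in $5\eta$ (or $m\eta$) will absorb.

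For \eqref{1step-deri-estimate4} I would feed $s$ and $u$ through Lemma~\ref{l.ess-change}, replacing each (up to a $C^2$-comparable ratio) by an explicit inverse-trig function of $e_1,e_2,\theta,f_1,f_2$. In the representative subcase $e_2\geq e_1\gg 1$, the argument of the $\tan^{-1}$ attached to $s$, namely $e_1^2\cot\theta$, has modulus at least $ce_1^{2-\eta}$ by nonresonance; likewise $f_2=\frac{1}{2}(e_2^2\cot\theta+e_2^2 e_1^{-4}\tan\theta)$ is dominated by its first summand and hence $\sqrt{f_2^2+1}+f_2$ has modulus of order $e_2^2|\cot\theta|\geq ce_2^{2-\eta}$. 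The elementary asymptotics $\tan^{-1}(y)=\pi/2-1/y+O(1/y^3)$ and $\cot^{-1}(y)=1/y+O(1/y^3)$ for $|y|\gg 1$ then deliver the $C^0$ bounds directly, and repeated chain/product-rule differentiation together with the hypotheses $|d^me_j|\leq Ce_j^{1+m\eta}$ and $|d^m\theta|\leq Ce_0^\eta$ yields the first and second derivative bounds; each differentiation introduces at most one or two additional factors of $e_0^\eta$, so after two differentiations the total loss is well inside the $5\eta$ margin. The cases $e_1\geq e_2$ and $e_1=e_2$ are symmetric, handled respectively by the $f_1$-formula and the third formula in Lemma~\ref{l.ess-change}.

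For \eqref{1step-deri-estimate5} I would start from the explicit expansion
$\|E\|^2+\|E\|^{-2}=\mathrm{tr}(E^{T}E)=e_1^2e_2^2\cos^2\theta+(e_1^{-2}e_2^2+e_1^2e_2^{-2})\sin^2\theta+e_1^{-2}e_2^{-2}\cos^2\theta$.
Nonresonance forces $|\cos\theta|\geq ce_0^{-\eta}$, so the ratio of each subdominant summand to the leading one $e_1^2e_2^2\cos^2\theta$ is bounded by $Ce_0^{2\eta-4}\ll 1$; consequently $e_3\approx e_1e_2|\cos\theta|$, and in particular the crucial lower bound $e_3\gtrsim e_0^{2-\eta}$ holds. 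This lower bound is what converts the natural $e_0^{O(\eta)}$-losses into $e_3^{O(\eta)}$-losses of the correct shape. Taking the logarithmic derivative gives $e_3'/e_3\approx e_1'/e_1+e_2'/e_2-\tan\theta\cdot\theta'$, and each summand is bounded by $Ce_0^{2\eta}\leq Ce_3^\eta$ (using the hypotheses and the small divisor control); this yields $|e_3'|\leq Ce_3^{1+\eta}$. Differentiating once more and applying the same bookkeeping with the hypotheses at $m=2$ yields the $m=2$ bound.

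The only genuine difficulty is the bookkeeping — tracking how many small-divisor factors each differentiation accumulates and verifying that the total power of $e_0^\eta$ stays within the $5\eta$ (for directions) and $m\eta$ (for the norm) margins. No new ideas beyond Lemma~\ref{l.ess-change} and elementary calculus are needed; one just needs to be patient enough near the borderline $e_1=e_2$ to confirm the three case-formulas match up with compatible constants.
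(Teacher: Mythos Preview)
Your proposal is correct and follows essentially the same route as the paper. Both reduce $s,u$ via Lemma~\ref{l.ess-change} to $\tan^{-1}$ of an argument dominated by $e_1^2\cot\theta$ (respectively $e_2^2\cot\theta$), then differentiate term by term using the hypotheses on $e_j$ and $\theta$; and both obtain $e_3\approx e_1e_2|\cos\theta|$ and differentiate. Two small remarks: the paper observes once and for all that in \emph{all three} cases of Lemma~\ref{l.ess-change} the $C^2$ estimate for $s$ is dominated by that of $\tan^{-1}(e_1^2\cot\theta)$, which saves you from treating the $f_1$- and $e_1=e_2$-formulas separately; and your intermediate claim that $e_j'/e_j\le Ce_0^{2\eta}$ is not quite right (you only get $Ce_j^\eta$), though the conclusion $\le Ce_3^\eta$ you actually need still follows since $e_j^\eta\le (e_1e_2)^\eta\le Ce_3^{\eta}$.
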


Then we move Lemma~\ref{l.1step-deri-estimate} forward to the concatenation of $n$ matrices in nonresonance case for some big, which is as follows.

Consider a sequence of map
$$
E^{(\ell)}\in C^2(I,\mathrm{SL}(2,\mathbb R)),\ 0\le \ell\le n-1.
$$
Let $s^{(\ell)}=s(E^{(\ell)})$, $u^{(\ell)}=u(E^{(\ell)})$, $\l_\ell=\|E^{(\ell)}\|$, and $\Lambda^{(\ell)}=\begin{pmatrix}\l_\ell&0\\0&\l_\ell^{-1}\end{pmatrix}$. By (\ref{polar-form}), we clearly have
$$
E^{(\ell)}=R_{u^{(\ell)}}\Lambda^{(\ell)}R_{\frac\pi2-s^{(\ell)}}.
$$
Set $E_k(x)=E^{(k-1)}(x)\cdots E^{(0)}(x)$, $1\le k\le n$. Let
$$
s_{k}=s(E_{k}),\ u_{k}=u(E_{k}),\ l_{k}=\|E_{k}\| \mbox{ and } L_{k}=\begin{pmatrix}l_{k}&0\\0&\l_{k}^{-1}\end{pmatrix}.
$$
Again from (\ref{polar-form}), we have
$$
E_{k}=R_{u_{k}}L_{k}R_{\frac\pi2-s_{k}}.
$$

Then the following lemma will be play the key role in dealing with the nonresonance case.

\begin{lemma}\label{l.nstep-deri-estimate}
Let $E^{(\ell)}$ and $E_k$ be as above. Let $0<\eta\ll 1\ll \l':=\min_{0\le\ell\le n-1}\{\l_\ell\}$. We further assume that $n<C\l'^{\frac12}$, and for any $x\in I$, $m=1,2$ and $0\le\ell\le n-1,$
$$
\left|\frac{d^m\l_\ell}{dx^m}(x)\right|< C\l_\ell^{1+m\eta};\ \left|\frac{d^ms^{(\ell)}}{dx^m}\right|,\ \left|\frac{d^mu^{(\ell)}}{dx^m}\right|<C\l'^{\eta},\ |s^{(\ell)}-u^{(\ell-1)}|>c\l'^{-\eta}.
$$
Then we have that

\beq\label{nstep-deri-estimate1}
\left\|u^{(n-1)}-u_{n}\right\|_{C^2}<C\l_{n-1}^{-(2-5\eta)},\ \left\|s^{(0)}-s_n\right\|_{C^2}<C\l_0^{-(2-5\eta)};
\eeq
\beq\label{nstep-deri-estimate2}
\left|\frac{d^ml_n}{dx^m}(x)\right|< Cl_n^{1+m\eta},\ m=1,2;
\eeq
\beq\label{nstep-deri-estimate3}
l_n>\left(\prod^{n-1}_{\ell=0}\l_\ell\right)^{1-\eta}.
\eeq
\end{lemma}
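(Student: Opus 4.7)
The plan is to induct on $k=1,\dots,n$, showing the three conclusions (\ref{nstep-deri-estimate1})--(\ref{nstep-deri-estimate3}) hold with $n$ replaced by $k$. The base case $k=1$ is trivial since $E_1 = E^{(0)}$, so $s_1=s^{(0)}$, $u_1=u^{(0)}$, $l_1=\lambda_0$. For the inductive step, I would decompose $E_{k+1} = E^{(k)} E_k$ via the polar form (\ref{polar-form}) and absorb the two innermost rotations into a single one:
\[
E_{k+1} \;=\; R_{u^{(k)}}\,\widetilde{E}_{k+1}\,R_{\pi/2-s_k}, \qquad \widetilde{E}_{k+1} \;:=\; \Lambda^{(k)}\,R_{\theta_k}\,L_k, \qquad \theta_k \;:=\; \tfrac{\pi}{2}-s^{(k)}+u_k.
\]
This yields the algebraic updates $s_{k+1}=s_k+s(\widetilde{E}_{k+1})-\pi/2$, $u_{k+1}=u^{(k)}+u(\widetilde{E}_{k+1})$, $l_{k+1}=\|\widetilde{E}_{k+1}\|$, and reduces the entire step to a single application of Lemma~\ref{l.1step-deri-estimate} to $\widetilde{E}_{k+1}$ with $e_1=l_k$, $e_2=\lambda_k$, $\theta=\theta_k$.

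To verify the hypotheses of Lemma~\ref{l.1step-deri-estimate}, the derivative bounds on $\lambda_k$ and $l_k$ come respectively from the standing assumption and from the inductive (\ref{nstep-deri-estimate2}); the $C^2$ bound on $\theta_k$ reduces to one on $u_k$, obtained via $\|u_k\|_{C^2}\leq\|u^{(k-1)}\|_{C^2}+\|u^{(k-1)}-u_k\|_{C^2}<C(\lambda')^\eta+C\lambda_{k-1}^{-(2-5\eta)}<C(\lambda')^\eta$, using the hypothesis on $u^{(k-1)}$ together with (\ref{nstep-deri-estimate1}) at level $k$. The nondegeneracy $|\theta_k-\pi/2|=|s^{(k)}-u_k|>\tfrac{c}{2}(\lambda')^{-\eta}$ follows from the nonresonance assumption and the $C^0$ closeness of $u_k$ to $u^{(k-1)}$, since $2-5\eta\gg\eta$ for $\eta\ll1$ and $\lambda'\gg1$. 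Lemma~\ref{l.1step-deri-estimate} then delivers $\|s(\widetilde{E}_{k+1})-\pi/2\|_{C^2}<Cl_k^{-(2-5\eta)}$, $\|u(\widetilde{E}_{k+1})\|_{C^2}<C\lambda_k^{-(2-5\eta)}$, and, via (\ref{1step-deri-estimate5}), the derivative bound for $l_{k+1}$. Substituting into the updates, (\ref{nstep-deri-estimate1}) at level $k+1$ for $u^{(k)}-u_{k+1}$ is immediate, while that for $s^{(0)}-s_{k+1}$ follows by telescoping $s^{(0)}-s_{k+1}=(s^{(0)}-s_k)+(s_k-s_{k+1})$ and summing a geometric series in $l_j$ (which grows at least geometrically), keeping the universal constant bounded uniformly in $k$; (\ref{nstep-deri-estimate2}) at level $k+1$ is exactly what Lemma~\ref{l.1step-deri-estimate} provides.

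For the norm lower bound (\ref{nstep-deri-estimate3}), direct inspection of the $(1,1)$-entry of $\widetilde{E}_{k+1}$ gives $l_{k+1}\geq \lambda_k l_k |\sin(s^{(k)}-u_k)|\geq c'(\lambda')^{-\eta}\lambda_k l_k$, iterating to $l_n\geq (c')^{n-1}(\lambda')^{-(n-1)\eta}\prod_{\ell=0}^{n-1}\lambda_\ell$. The main obstacle is converting this into the stated exponent $1-\eta$: the multiplicative loss $(c'(\lambda')^{-\eta})^{n-1}$ accumulated from near-resonances at each step must be absorbed into the slack $(\prod\lambda_\ell)^{-\eta}\leq(\lambda')^{-n\eta}$, and this is precisely where the restriction $n<C(\lambda')^{1/2}$ becomes decisive; combined with $\lambda'\gg1$ and $\lambda_\ell\geq\lambda'$ for every $\ell$, it guarantees that the logarithmic loss from the $(c')^{n-1}$ factor is dominated by the total gain $\log\prod\lambda_\ell\geq n\log\lambda'$, once $\eta$ is handled with a little care. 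A secondary but nontrivial point is that the $C^2$ control of $u_k$ must not degrade across iterations, which is why the strong exponent $2-5\eta$ in (\ref{nstep-deri-estimate1}) is essential rather than, say, $1+\eta$: it makes the telescoping sum defining $\|s^{(0)}-s_k\|_{C^2}$ converge geometrically in $k$, and it ensures the perturbation of $\theta_k$ caused by replacing $u^{(k-1)}$ with $u_k$ stays negligible at every step, closing the induction cleanly.
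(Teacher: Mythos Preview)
Your proposal is correct and follows essentially the same approach as the paper's own proof: both argue by induction on $k$, reduce $E_{k+1}=E^{(k)}E_k$ via the polar decomposition to a single application of Lemma~\ref{l.1step-deri-estimate}, propagate the $C^2$ control of $u_k$ to verify the angle hypothesis at the next step, telescope the increments $s_k-s_{k+1}$ to get the $s^{(0)}-s_n$ bound, and iterate the one-step norm inequality $l_{k+1}\gtrsim (\lambda')^{-\eta}\lambda_k l_k$ for (\ref{nstep-deri-estimate3}). The only cosmetic difference is in the bookkeeping for (\ref{nstep-deri-estimate3}): the paper absorbs the multiplicative constant $c$ into $(\lambda')^{-\eta}$ at each step using only that $\lambda'$ is large (and, in the intended applications, that the $\lambda_\ell$ grow super-exponentially), rather than appealing to $n<C(\lambda')^{1/2}$ as you do.
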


\begin{remark}\label{C0-or-C1}
By the proof of Lemma~\ref{l.1step-deri-estimate} and \ref{l.nstep-deri-estimate} in Section~\ref{b}, it is not difficult to see that in order to get (\ref{nstep-deri-estimate3}) and a $C^0$ version of (\ref{nstep-deri-estimate1}), one only needs to assume that the norm of the sequence of matrices are large and $|s^{(\ell)}-u^{(\ell-1)}|^{-1}$ is not large with respect to norms. If in addition, one needs $C^1$ version of (\ref{nstep-deri-estimate1}), then one just needs to add the corresponding $C^1$ control of the norm maps, $s$ and $u$. In particular, the $C^1$ version of Lemma~\ref{l.nstep-deri-estimate} is essentially the same with \cite[Lemma 3]{young}.
\end{remark}

By Lemma~\ref{l.nstep-deri-estimate}, we will see that we can reduce the model to the concatenation of two matrices to deal with the resonance case. In other words, we only need to consider $E_2\cdot E_1$. However, $s(E_2)-u(E_{1})$ may pass through $0$. We will show that in the resonance case, with the help of Lemma~\ref{l.ess-change}, some good estimate still holds true if $\|E_2\|\gg \|E_{1}\|$ or $\|E_1\|\gg \|E_{2}\|$. We first estimate the derivatives of the norm functions, and give the upper-bound of the most contraction direction.
\begin{lemma}\label{l.norm-deri}
Let $E(x)=E_2(x)E_1(x)$. Define $e_3(x)=\|E(x)\|$ and $e_0=\min\{e_1,e_2\}$. Assume $0<\eta\ll 1\ll e_0$ and $0<\b\ll1$.  Suppose $e_1\le e_2^\b$ or $e_2\le e_1^\b$,  and for $\t(x)=s[E_2(x)]-u[E_1(x)]$ and each $x\in I$, $j,m=1,2$, it holds that
$$
\left|\frac{d^me_j}{dx^m}(x)\right|< Ce_j^{1+m\eta};\ \left|\frac{d^m\t}{dx^m}\right|<Ce_0^{\eta}.
$$
Then we have for $m=1,2$,
\beq\label{norm-deri2}
\left|\frac{d^ms[E(x)]}{dx^m}\right|<Ce_1^{4+2\eta},\ \left|\frac{d^mu[E(x)]}{dx^m}\right|<Ce_3^{-\frac32} \mbox{ if } e_1\le e_2^\b;
\eeq
\beq\label{norm-deri3}
\left|\frac{d^mu[E(x)]}{dx^m}\right|<Ce_2^{4+2\eta},\ \left|\frac{d^ms[E(x)]}{dx^m}\right|<Ce_3^{-\frac32} \mbox{ if } e_2\le e_1^\b;
\eeq
\beq\label{norm-deri1}
\left|\frac{d^me_3}{dx^m}(x)\right|<Ce_3^{1+m\eta+2m\eta\b}.
\eeq
\end{lemma}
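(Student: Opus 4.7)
The plan is to reduce $E = E_2 E_1$ to the canonical form $\tilde E := \Lambda^{(2)} R_{\pi/2-\theta} \Lambda^{(1)}$, where $\Lambda^{(j)} = \mathrm{diag}(e_j, e_j^{-1})$, and then invoke Lemma~\ref{l.ess-change} to replace $s[\tilde E]$ and $u[\tilde E]$ by elementary model functions of $e_1, e_2, \theta$ whose $C^2$-norms can be computed directly from the hypotheses.

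Using the polar decompositions $E_j = R_{u^{(j)}}\Lambda^{(j)}R_{\pi/2-s^{(j)}}$ and the identity $R_{\pi/2 - s^{(2)}}R_{u^{(1)}} = R_{\pi/2 - \theta}$, I factor $E = R_{u^{(2)}}\,\tilde E\,R_{\pi/2-s^{(1)}}$. Rotations preserve the operator norm ($e_3 = \|\tilde E\|$) and shift the contracting/expanding directions by additive angles whose derivatives are of lower order in the inductive context where this lemma is invoked. Hence it suffices to bound $d^m s[\tilde E]/dx^m$, $d^m u[\tilde E]/dx^m$, and $d^m e_3/dx^m$.

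Apply Lemma~\ref{l.ess-change} to $\tilde E$ with middle angle $\pi/2-\theta$, whereupon $\cot$ and $\tan$ interchange in the model functions. In the regime $e_1\le e_2^\beta$ (so $e_2\gg e_1$), estimate (\ref{s-e2-great-e1}) tells us that $s[\tilde E]$ and $u[\tilde E]$ are $C^2$-comparable respectively to $h_1 := \tan^{-1}(e_1^2\tan\theta)$ and $h_2 := \cot^{-1}(\sqrt{f_2^2+1}+f_2)$, where $f_2 := \tfrac12(e_2^2\tan\theta + e_2^2 e_1^{-4}\cot\theta)$. Setting $g := e_1^2\tan\theta$, the hypotheses yield $|g^{(m)}|<Ce_1^{2+m\eta}$ for $m=1,2$; the identity $h_1'' = g''/(1+g^2) - 2g(g')^2/(1+g^2)^2$ achieves its maximum in the marginal regime $|g|\asymp 1$, where the second summand dominates and gives $|h_1''|<C(g')^2<Ce_1^{4+2\eta}$, which is (\ref{norm-deri2}) for $s$. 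For $u$, I exploit that $f_2 \gtrsim \max\{e_2^2 e_1^{-4}, e_2^2|\tan\theta|\}$, so $h_2 \approx 1/(2f_2)$ and its $m$-th derivative is $O(|f_2'|^m/f_2^{m+1})$; combining with the bound on $e_3$ derived below, this yields $|u[\tilde E]^{(m)}|<Ce_3^{-3/2}$. The case $e_2\le e_1^\beta$ is symmetric via (\ref{s-e1-great-e2}).

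For $e_3$, a direct computation gives
$$\mathrm{tr}(\tilde E^T\tilde E) = (e_1^2e_2^2+e_1^{-2}e_2^{-2})\sin^2\theta + (e_1^{-2}e_2^2+e_1^2e_2^{-2})\cos^2\theta,$$
so that $e_3^2 = \tfrac12(\mathrm{tr}+\sqrt{\mathrm{tr}^2-4})$. Minimizing over $\theta$ shows that in the first case one has $e_3\gtrsim e_2/e_1\ge e_2^{1-\beta}$. Applying the product rule, each factor $de_j/dx$ or $d\theta/dx$ contributes at most $e_j^\eta$ or $e_0^\eta$, hence at most $e_3^{\eta/(1-\beta)}\le e_3^{\eta+2\eta\beta}$ for small $\beta$; this produces (\ref{norm-deri1}). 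The main obstacle is the bookkeeping across the three sub-regimes $|g|\ll 1$, $|g|\asymp 1$, $|g|\gg 1$ of resonance depth: the dominant chain-rule term differs in each, and the $e_1^{4+2\eta}$ bound for $s$ is genuinely saturated only in the marginal regime $|g|\asymp 1$, which reflects the tangential intersection of stable and unstable directions that motivates this lemma.
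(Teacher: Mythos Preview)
Your approach is correct and essentially identical to the paper's: both reduce via polar decomposition to the canonical form of Lemma~\ref{l.ess-change}, replace $s$ and $u$ by the explicit model functions there, and bound the chain-rule expressions directly---the paper likewise identifies the worst case for the $s$-bound as the marginal regime $e_1^2\cot\theta\approx 1$ (your $|g|\asymp 1$), and for $e_3$ isolates the extreme configuration $\cos\theta=0$, where your exponent-counting via $e_3\gtrsim e_2^{1-\beta}$ becomes an explicit computation. One cosmetic difference is that you carry the $\tfrac\pi2$-shift through so that $\tan$ and $\cot$ swap relative to the paper's formulas, but the calculations match line by line.
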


However, in the resonance case, we also need a $C^2$ lower bound near $C^1$ degenerate points. Instead of estimating the derivatives of the most contraction directions directly, let us consider the following three types of functions, which basically classify all the possible ways that the $n$-step stable directions intersecting with unstable directions. In particular, the type $\III$ functions are going to describe the resonance case, from which we also have a bifurcation procedure.

Let $I\subset\mathbb R/\mathbb Z$ be a connected interval. Without loss of generality, let $I=B(0,r)$ and $l$ satisfy $l\gg r^{-1}\gg 1$. For the given $I$ and $l$, we define the following types of functions.
\begin{defi}\label{d.type-of-functions}
Let $I$ and $l$ be as above. Let $f\in C^2(I,\mathbb R\mathbb P^1)$. Then
\vskip0.2cm
\noindent {\bf $f$ is of type $\mathrm{I}$} if we have the following. $\|f\|_{C^2}<C$ and $f(x)=0$ has only one solution, say $x_0$, which is contained in $\frac{I}{3}$; $\frac{df}{dx}=0$ has at most one solution on $I$; $|\frac{df}{dx}|>r^2$ for all $x\in B(x_0,\frac r2)$; Let $J\subset I$ be the subinterval such that $\frac{df}{dx}(J)\cdot\frac{df}{dx}(x_0)\le 0$, then $|f(x)|>cr^3$ for all $x\in J$. Let $\mathrm{I}_{+}$ denotes the case $\frac{df}{dx}(x_0)> 0$ and $\mathrm{I}_{-}$ for $\frac{df}{dx}(x_0)<0$. See figure $(a)$.

\vskip0.2cm
\noindent {\bf $f$ is of type $\mathrm{II}$} if we have the following. $\|f\|_{C^2}<C$ and $f(x)=0$ has at most two solutions; $\frac{df}{dx}(x)=0$ has one solution; All of these solutions are contained in $\frac{I}{2}$; $f(x)=0$ has one solution if and only if it is the $x$ such that $\frac{df}{dx}(x)=0$; Finally, $\left|\frac{d^2f}{dx^2}\right|>c$ whenever $|\frac{df}{dx}|<r^2$. See figure $(b)$.

\vskip0.2cm
\noindent {\bf $f$ is of type $\mathrm{III}$} if
\beq\label{type3}
f=\tan^{-1}(l^2[\tan f_1(x)])-\frac\pi2+f_2,
\eeq
where either $f_1$ is of type $\mathrm{I}_+$ and $f_2$ of type $\mathrm{I}_-$, or $f_1$ is of type $\mathrm{I}_-$ and $f_2$ of type $\mathrm{I}_+$. See figure $(c)$.

\end{defi}
A simple case of type $\mathrm{I}$ function is that $\left|\frac{df}{dx}(x)\right|>r^2$ for all $x\in I$. The form of type $\III$ function in (\ref{type3}) actually follows from the first estimate of (\ref{s-e2-great-e1}) and the second estimate of (\ref{s-e1-great-e2}).

\begin{center}
\begin{tikzpicture}[yscale=1.5]
\draw [->] (-1.5,0) -- (1.5,0);
\draw [->] (0,-0.5) -- (0,1.3);
\draw [thick,domain=-1.4:1.4] plot (\x, {0.1*\x*\x+0.3*\x});
\node [above] at (0,1.3) {$f(x)$};
\node [right] at (1.5,0) {$x$};
\node[align=left, below] at (0,-0.5)%
{(a) type $\I$};
\end{tikzpicture}\ \ \ \
\begin{tikzpicture}[yscale=1.5]
\draw [->] (-1.5,0) -- (1.5,0);
\draw [->] (0,-0.5) -- (0,1.3);
\draw [thick,domain=-1.4:1.4] plot (\x, {0.25*\x*\x-0.1});
\node [above] at (0,1.3) {$f(x)$};
\node [right] at (1.5,0) {$x$};
\node[align=left, below] at (0,-0.5)%
{(b) type $\II$};
\end{tikzpicture}\ \ \ \
\begin{tikzpicture}[yscale=0.7]
\draw [->] (-1.5,0.6) -- (1.5,0.6);
\draw [->] (0,0) -- (0,3.75);
\draw [thick,domain=-1:1] plot (\x, {1/180*pi*atan(20*tan(\x r))+0.5*pi-0.5*(\x-0.9)});
\draw [semithick,domain=-1.2:1.2] plot (\x, {0*\x+pi});
\node [above] at (0,3.75) {$f(x)$};
\node [right] at (1.5,0.6) {$x$};
\node [left] at (-1.2,3.14) {$\pi$};
\node[align=left, below] at (0,-0.1)%
{(c) type $\III$};
\end{tikzpicture}
\end{center}

The following lemma for $f$ of type $\mathrm{III}$ actually plays the key role for the lower-bound estimate of the geometric properties of most contraction directions in resonance case. Without loss of generality, let $f$ be as in (\ref{type3}) with $f_1$ be type $\mathrm{I}_+$ and $f_2$ type $\mathrm{I}_-$ throughout this section. We may further assume that $f_1(0)=0$ and $f_2(d)=0$ with $0\le d\le \frac 23r$. Let
$$
X=\{x\in I: \mathbb R\mathbb P^1\ni |f(x)|=\min_{y\in I}|f(y)|\}.
$$
Then it is easy to see that $X$ contains at most two points, say $X=\{x_1,x_2\}$ with $x_1\le x_2$. Then we have the following lemma.
\begin{lemma}\label{l.s-deri-resonance}
Let $f$ be of type $\mathrm{III}$. Let $\eta_j$ be constants satisfying $r^{2}\le \eta_j\le r^{-2}$, $0\le j\le 4$. Then
\beq\label{type3-zero}
|x_1|<Cl^{-\frac 34},\ |x_2-d|<Cl^{-\frac 34}.
\eeq
In particular, if $f(x_1)=f(x_2)=0$, then
\beq\label{type3-zero-order}
0<x_1\le x_2<d;
\eeq
if $f(x_1)=f(x_2)\neq 0$, then
\beq\label{type3-zero-1}
x_1=x_2.
\eeq
Moreover there exist two distinct points $x_3,\ x_4\in B(x_1,\eta_0l^{-1})$ such that $\frac{df}{dx}(x_j)=0$ for $j=3,4$, and $x_3$ is
 a local minimum with \beq\label{type3-nonzero-minimum}
f(x_3)>\eta_1l^{-1}-\pi.
\eeq
 Moreover, we have the following.

\vskip0.2cm
If $d\ge \frac r3$, then we have
\beq\label{type3-1}
|f(x)|>cr^3,\ x\notin B(x_1,Cl^{-\frac14})\cup B(x_2,\frac r4);\ \|f-f_2\|_{C^1}<Cl^{-\frac32},\ x\in B(x_2,\frac r4).
\eeq

If $d<\frac r3$, then we have
\beq\label{type3-nondegeneracy}
\left|\frac{d^2f}{dx^2}(x)\right|>c \mbox{ whenever } \left|\frac{df}{dx}(x)\right|\le r^2,\ \forall x\in B(X,\frac r6)
\eeq
and $|f(x)|>cr^3$ for all $x\notin B(X,\frac r6)$.\\

Finally, we have the following bifurcation as $d$ varies. There is a $d_0=\eta_2 l^{-1}$ such that:
\begin{itemize}
\item if $d>d_0$, then $f(x)=0$ has two solutions. See figure $(d)$;
\item if $d=d_0$, then $f(x)=0$ has exactly one tangential solution. In other words, $x_1=x_2=x_4$ and $f(x_4)=0$. See figure $(e)$;
\item if $0\le d<d_0$, then $f(x)\neq 0$ for all $x\in I$. See figure $(f)$. Moreover, we have
      $$
      \min_{x\in I}|f(x)|=-\eta_3l^{-1}+\eta_4d.
      $$
\end{itemize}
\end{lemma}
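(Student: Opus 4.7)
The plan is to decompose $f = g + f_2$ where $g(x) := \tan^{-1}(l^2\tan f_1(x)) - \pi/2$ carries the sharp transition produced by the multiplier $l^2$, and $f_2$ is the slowly varying type $\I_-$ correction. Since $f_1$ is type $\I_+$ with $f_1(0) = 0$ and $f_1'(0) \geq r^2$, the function $g$ is monotone increasing near $0$ and jumps from $\approx -\pi$ (for $x$ slightly negative) to $\approx 0$ (for $x$ slightly positive), with transition scale $\sim 1/(l^2 f_1'(0))$ and tail decay $\sim 1/(l^2|f_1(x)|)$ further away; meanwhile $f_2(0) = |f_2'(0)|d + O(d^2) > 0$ for $d > 0$ and $\|f_2\|_{C^2}$ is uniformly bounded on $I$.

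First I locate the critical points of $f$. The equation $g'(x) + f_2'(x) = 0$ becomes $l^2 f_1'(x)\sec^2 f_1(x) / [1 + l^4\tan^2 f_1(x)] = |f_2'(x)|$; the left-hand side is a positive bump of peak $\sim l^2 f_1'(0)$ and width $\sim 1/(l^2 f_1'(0))$ around $0$, while the right-hand side is bounded and nearly constant, so there are exactly two solutions $x_3 < 0 < x_4$ with $|x_3|, x_4 \sim 1/(l\sqrt{f_1'(0)|f_2'(0)|})$, placing them in $B(0,\eta_0 l^{-1})$ for a suitable $\eta_0 \in (r^2, r^{-2})$. Writing $u = l^2\tan f_1$ so that $u(x_3)\sim -l$ and $u(x_4)\sim +l$, the identity
\[
g''(x) = \frac{u''(x)}{1+u(x)^2} - \frac{2 u(x)\,u'(x)^2}{(1+u(x)^2)^2}
\]
yields $g''(x_3) \sim +2l$ and $g''(x_4) \sim -2l$, so $x_3$ is a local min and $x_4$ a local max of $f$. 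Using the expansion $\tan^{-1}(\mp 1/\sqrt{|a|}) \approx \mp\pi/2 \pm \sqrt{|a|}$ with $|a| := |f_2'(0)|/(l^2 f_1'(0))$, I then obtain
\[
f(x_3) + \pi \approx |f_2'(0)|d + \tfrac{2}{l}\sqrt{|f_2'(0)|/f_1'(0)}, \qquad f(x_4) \approx |f_2'(0)|d - \tfrac{2}{l}\sqrt{|f_2'(0)|/f_1'(0)},
\]
which proves (\ref{type3-nonzero-minimum}) with $\eta_1 = 2\sqrt{|f_2'(0)|/f_1'(0)}$, and identifies $d_0 = 2/[l\sqrt{f_1'(0)|f_2'(0)|}] = \eta_2 l^{-1}$ as the value where $f(x_4) = 0$.

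The three bifurcation regimes now follow from monotonicity and interval arithmetic. Since $f$ is decreasing on $(-r,x_3)$, increasing on $(x_3,x_4)$, decreasing on $(x_4,r)$, and $f(\pm r) \in (-\pi, 0)$ strictly (using $|f_2'(0)|r \ll \pi$), the only candidate zeros mod $\pi$ lie in $(x_3,x_4) \cup (x_4,r)$. For $d > d_0$: $f(x_3) < 0 < f(x_4)$ yields, via IVT, one zero $x_1 \in (0,x_4)$ (since $f(0) = -\pi/2 + |f_2'(0)|d < 0$), and $f(x_4) > 0 > f(d) = g(d) \approx -1/(l^2 f_1(d)) < 0$ yields a second zero $x_2 \in (x_4,d)$, proving (\ref{type3-zero-order}). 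For $d < d_0$: $f(x_4) < 0$ forces $f < 0$ throughout $I$, so $f$ has no zero mod $\pi$; and since $|f|_{\R\PP^1}(x_3) - |f|_{\R\PP^1}(x_4) = 2|f_2'(0)|d > 0$, the unique argmin of $|f|_{\R\PP^1}$ is $x_4$, giving $x_1 = x_2 = x_4$ and (\ref{type3-zero-1}). The case $d = d_0$ is the tangential zero $x_1 = x_2 = x_4$ with $f(x_4) = 0$. The bounds (\ref{type3-zero}) then follow from $x_1 \in B(0, Cl^{-1})$ directly, and, by IVT applied to $f$ near $d$ with $|g(d)| \lesssim 1/(l^2 f_1(d)) \leq Cl^{-1}$, from $|x_2 - d| \leq |g(d)|/|f_2'(d)| \leq Cl^{-1}$, both well within $Cl^{-3/4}$.

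The remaining estimates split by the two cases. For $d \geq r/3$, outside $B(x_1, Cl^{-1/4}) \cup B(x_2, r/4)$ one has $|x| > Cl^{-1/4}$ (so $|g|$ is negligible compared to $r^3$) and $|x - d| > r/4$ (so by the type $\I$ property $|f_2| > cr^3$), hence $|f| > cr^3/2$; on $B(x_2, r/4)$, where $|f_1| \gtrsim r$, the same tail analysis yields $\|g\|_{C^1} \leq Cl^{-3/2}$, establishing the $C^1$ statement of (\ref{type3-1}). For $d < r/3$, the nondegeneracy (\ref{type3-nondegeneracy}) holds because wherever $|f'| \leq r^2$ one is within $Cr^2/l$ of a critical point $x_3$ or $x_4$, at which $|f''| \approx |g''| \sim l \gg 1$ by the formula above; and the lower bound $|f| > cr^3$ off $B(X, r/6)$ follows by combining the smallness of $|g|$ outside the transition with the type $\I$ lower bound for $f_2$. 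The main technical obstacle is the careful bookkeeping of the asymptotics of $\tan^{-1}$ and its first two derivatives in the transition region, uniformly in all regimes of $d$ from $0$ up to $2r/3$, and matching these expansions precisely with the slowly-varying $f_2$ to extract both the bifurcation value $d_0$ and the nondegeneracy lower bounds at every stage without losing the sharp constants $\eta_0,\dots,\eta_4$ demanded by the statement.
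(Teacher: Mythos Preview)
Your approach is essentially the same as the paper's: decompose $f = g + f_2$, locate the two critical points $x_3, x_4$ at scale $l^{-1}$ from the balance $g' = |f_2'|$, read off $f(x_3), f(x_4)$ via the $\tan^{-1}$ asymptotics, and extract the bifurcation threshold $d_0$ from $f(x_4) = 0$. The paper performs the same computation, writing $f'$ and $f''$ explicitly (your formula for $g''$ in the variable $u = l^2\tan f_1$ is equivalent to the paper's expression in $\tan f_1$); for the zeros it passes to the approximation $f(x) \approx -c/(l^2 r_8 x) + cr_9(d-x)$ and solves the resulting quadratic $x^2 - dx + c/(r_8 r_9 l^2) = 0$ directly, which is just a more explicit version of your IVT argument and gives the same $x_1 + x_2 = d$, $x_1 x_2 \sim l^{-2}$ structure. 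For the nondegeneracy bound (\ref{type3-nondegeneracy}), the paper isolates the window $l^{-5/2} < \tan^2 f_1 < l^{-3/2}$ where $|f'| \le r^2$ can occur and bounds the dominant third term of $f''$ from below by $l^{1/4}(f_1')^2 > l^{1/4}r^4$, whereas you phrase it as $|f''| \sim l$ near $x_3, x_4$; these are the same estimate on the same term.
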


\begin{center}
\begin{tikzpicture}[yscale=0.7]
\draw [->] (-1.5,0.3) -- (1.5,0.3);
\draw [->] (0,0) -- (0,4);
\draw [thick,domain=-1:1] plot (\x, {1/180*pi*atan(20*tan(\x r))+0.5*pi-0.5*(\x-0.9)});
\draw [semithick,domain=-1.2:1.2] plot (\x, {0*\x+pi});
\draw [dotted, thick, red] (-0.31,0.3) -- (-0.31,0.8);
\draw [dotted, thick, red] (0.32,0.3) -- (0.32,3.3);
\draw[fill=red] (-0.31,0.3) circle [radius=0.055];
\node [below] at (-0.31,0.3) {$x_3$};
\draw[fill=blue] (0.14,pi) circle [radius=0.055];
\node [above] at (0.2,pi) {$x_1$};
\draw[fill=red] (0.32,0.3) circle [radius=0.055];
\node [below] at (0.32,0.3) {$x_4$};
\draw[fill=blue] (0.75,pi) circle [radius=0.055];
\node [above] at (0.75,pi)  {$x_2$};
\node [above] at (0,4) {$f(x)$};
\node [right] at (1.5,0.3) {$x$};
\node [left] at (-1.2,3.14) {$\pi$};
\node[align=left, below] at (0,-0.3)%
{(d)\ $d>d_0$};
\end{tikzpicture}\ \ \ \
\begin{tikzpicture}[yscale=0.7]
\draw [->] (-1.5,0.3) -- (1.5,0.3);
\draw [->] (0,0) -- (0,4);
\draw [thick,domain=-1:1] plot (\x, {1/180*pi*atan(20*tan(\x r))+0.5*pi-0.5*(\x-0.57)});
\draw [semithick,domain=-1.2:1.2] plot (\x, {0*\x+pi});
\node [above] at (0,4) {$f(x)$};
\node [right] at (1.5,0.3) {$x$};
\node [left] at (-1.2,3.14) {$\pi$};
\node[align=left, below] at (0,-0.3)%
{(e)\ $d=d_0$};
\end{tikzpicture}\ \ \ \
\begin{tikzpicture}[yscale=0.7]
\draw [->] (-1.5,0.3) -- (1.5,0.3);
\draw [->] (0,0) -- (0,4);
\draw [thick,domain=-1:1] plot (\x, {1/180*pi*atan(20*tan(\x r))+0.5*pi-0.5*(\x-0.4)});
\draw [semithick,domain=-1.2:1.2] plot (\x, {0*\x+pi});
\node [above] at (0,4) {$f(x)$};
\node [right] at (1.5,0.3) {$x$};
\node [left] at (-1.2,3.14) {$\pi$};
\node[align=left, below] at (0,-0.3)%
{(f)\ $0\le d<d_0$};
\end{tikzpicture}
\end{center}

\noindent The proof will be given in the Appendix~\ref{c}.
\begin{remark}\label{r.type-of-functions}
Let $I'=B(x_0,r')$ with $r'<\frac r2$, then the restriction of type $\I$ $f$ on $I'$ is still of type $\I$ for $I'$. Let $I''=B(0,r'')$ with $r''\le r$, then the restriction of type $\II$ $f$ on $I''$ is still of type $\II$ for $I''$ if all solutions to $f(x)=0$ and $\frac{df}{dx}=0$ are contained in $\frac{I''}{2}$. Let $I'''\subset I$ be any connected interval containing $B(x_1, Cl^{-\frac14})$. We also call the restriction of the type $\mathrm{III}$ $f$ on $I'''$ is again of type $\mathrm{III}$ for $I'''$. Note that if $d$ is sufficiently close to $d_0$ and $r'''$ is sufficiently small, then the restriction of type $\III$ function $f$ to $B(x_1,r''')$ may become type $\II$. However, for the sake of simplicity, we still call this restriction is of type $\III$. In any case, we have the following corollary which is important for the  application of Lemma~\ref{l.nstep-deri-estimate}--\ref{l.s-deri-resonance}.
\end{remark}

\begin{corollary}\label{c.distance-away-from-zero}
Let $f:I\rightarrow\R\PP^1$ be of type $\I$, $\II$ or $\III$. Define
$$
X=\{x\in I: |f(x)|=\min_{y\in I}|f(y)|\}=\begin{cases}\{x_0\}, &\mbox{if f is of type }\I\\\{x_1,x_2\}, &\mbox{if f is of type }\II\mbox{ or }\III.\end{cases}
$$
In case $f$ is of type $\III$, we further assume $d:=|x_1-x_2|<\frac r3$. Then for any $0<r'<r$, we have that
\beq\label{distance-away-from-zero}
|f(x)|>cr'^3,\mbox{ for all } x\notin B(X,r').
\eeq
For the case that $f$ is of type $\III$, we have the same estimate (\ref{distance-away-from-zero}) for $Cl^{-\frac14}<r'<r$ if $d\ge\frac r3$.
\end{corollary}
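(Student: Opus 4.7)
The plan is to verify the estimate separately for each of the three types, using the structural information in Definition~\ref{d.type-of-functions} and Lemma~\ref{l.s-deri-resonance}. In all three cases the cubic exponent in $r'^3$ is dictated by the type $\mathrm{I}$ regime, where the best available gradient bound is $|f'|>r^2$; in the other types one gains a power of $r'$ and the estimate is comfortably satisfied.

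For type $\mathrm{I}$, I would split $I$ into $B(x_0,r/2)$ and its complement. On $B(x_0,r/2)$ the bound $|f'|>r^2$ together with the mean value theorem gives $|f(x)|\ge r^2 r'\ge cr'^{3}$ for $x\notin B(x_0,r')$, using $r'\le r$. On the complement, a point $x\notin B(x_0,r/2)$ either lies in the subinterval $J$, where $|f(x)|>cr^3\ge cr'^{3}$ is built into the definition, or in the monotonic region $I\setminus J$, where monotonicity combined with the boundary estimate $|f(x_0\pm r/2)|\ge r^3/2$ propagates to $|f(x)|\ge r^3/2\ge cr'^{3}$.

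For type $\mathrm{II}$, the unique critical point $\tilde x$ lies in $I/2$ and $|f''|>c$ on the region where $|f'|<r^2$. A second-order Taylor expansion around the minimizer(s) in $X$ yields $|f(x)|\ge c\,\mathrm{dist}(x,X)^{2}$ in a quadratic neighborhood of $X$, while outside that neighborhood the linear bound $|f'|\ge r^2$ takes over. Combining these gives $|f(x)|\ge cr'^{2}\ge cr'^{3}$ for $x\notin B(X,r')$, where I use $r'<r<1$.

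For type $\mathrm{III}$, I would use Lemma~\ref{l.s-deri-resonance} and split on the size of $d=|x_1-x_2|$. When $d<r/3$, (\ref{type3-nondegeneracy}) supplies $|f''|>c$ whenever $|f'|\le r^2$ on $B(X,r/6)$, together with $|f|>cr^3$ off $B(X,r/6)$; this reduces the analysis on $B(X,r/6)$ to the type $\mathrm{II}$ argument and produces the bound for all $0<r'<r$. When $d\ge r/3$, (\ref{type3-1}) provides $|f|>cr^3$ outside $B(x_1,Cl^{-1/4})\cup B(x_2,r/4)$, while $\|f-f_2\|_{C^1}<Cl^{-3/2}$ on $B(x_2,r/4)$ with $f_2$ of type $\mathrm{I}_-$. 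The hypothesis $r'>Cl^{-1/4}$ ensures $B(x_1,Cl^{-1/4})\subset B(x_1,r')$, so the non-resolvable neighborhood of $x_1$ is absorbed into $B(X,r')$; near $x_2$, I would transfer the type $\mathrm{I}$ estimate for $f_2$ (already proved in the first step) to $f$ via the $C^1$ bound. The main technical obstacle I anticipate is precisely this transfer: one must check that the perturbation $Cl^{-3/2}$ is negligible against the principal term $cr'^{3}$, which follows from $r'>Cl^{-1/4}$ since then $r'^{3}\ge C^{3}l^{-3/4}\gg l^{-3/2}$, but the effective radius when applying the type $\mathrm{I}$ estimate on the subinterval $B(x_2,r/4)$ instead of all of $I$ has to be tracked carefully.
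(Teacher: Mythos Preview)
Your arguments for types $\I$ and $\II$, and for type $\III$ with $d\ge r/3$, follow the paper's proof closely and are correct. The gap is in the type $\III$ case with $d<r/3$: the claimed reduction to ``the type $\II$ argument'' does not go through as stated, because $f$ is $\R\PP^1$-valued and wraps around through $-\pi\equiv 0$ near $x_1$. Lemma~\ref{l.s-deri-resonance} produces two extra critical points $x_3,x_4\in B(x_1,\eta_0 l^{-1})$ of $f$, with $x_3$ a local minimum satisfying $f(x_3)>\eta_1 l^{-1}-\pi$. In $\R\PP^1$ this means $|f|$ has an additional local minimum at $x_3$, of size roughly $\eta_1 l^{-1}$, which is \emph{not} a point of $X$. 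For $r'<|x_1-x_3|$ this point lies outside $B(X,r')$, and your Taylor/type-$\II$ argument --- which relies on monotone growth of $|f|$ away from $X$ --- does not see it. The hypothesis \eqref{type3-nondegeneracy} controls $f''$ but says nothing about the wrap-around.

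The paper fills exactly this gap by partitioning a one-sided neighbourhood of $x_1$ into $[x_1-r',x_3]\cup[x_3,x_1]\cup J$ and invoking \eqref{type3-nonzero-minimum} on the middle piece: since $|x_3-x_1|=\eta_0 l^{-1}$ and $|f(x_3)|\ge c\eta_1 l^{-1}>c|x_3-x_1|^3$, the cubic lower bound survives at the extra local minimum, and this propagates across $[x_3,x_1]$. On the outer pieces $[x_1-r',x_3]$ and $J$ the nondegeneracy \eqref{type3-nondegeneracy} does reduce to the type-$\II$ mechanism. You should insert this step; once you do, your argument matches the paper's.
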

\begin{proof}
Let us consider the case that $f$ is of type $\I$ first. If $r>\frac r2$, then (\ref{distance-away-from-zero}) is obtained by definition. If $r'\le \frac r2$, then for all $x\notin B(X,r')$, it holds that
$$
|f(x)|>r^2|x-x_0|>r^2r'>cr'^3.
$$

If $f$ is of type $\II$, then clearly for some $d',\ d''\ge0$ satisfying $d'+d''=|x-x_{j}|$, we have that for all $x\notin B(X,r')$, it holds that
$$
|f(x)|>cd'^2+r^2|d''|>c|x-x_j|^3>cr'^3.
$$

If $f$ is of type $\III$ and $d<\frac r3$. Then (\ref{distance-away-from-zero}) follows from Lemma~\ref{l.s-deri-resonance} directly for $r'>\frac r6$. If $|x_1-x_3|\le r'\le \frac r6$, we partition $B(X,r')$ as
 $$
 B(X,r')=[x_1-r',x_3]\cup [x_3,x_1]\cup J.
 $$

Then for the part $J$, the corresponding growth of $f$ follows from (\ref{type3-nondegeneracy}) of Lemma~\ref{l.s-deri-resonance} and the same argument for type $\II$ functions. For the part $[x_3,x_1]$, the issue is that $f(x)$ may increase too fast from near $-\pi$ to near $0$. However, by (\ref{type3-nonzero-minimum}) of Lemma~\ref{l.s-deri-resonance}, we have $|x_3-x_1|=\eta_0l^{-1}$ and
$$
|f(x_3)|>\eta_1l^{-1}-\pi>c|x_3-x_1|^3-\pi,
$$
which is also the local minimal. Hence, we have the corresponding growth of $f(x)$.  For the part $[x_1-r',x_3]$, again by (\ref{type3-nondegeneracy}) of Lemma~\ref{l.s-deri-resonance}, we have corresponding growth as those for type $\II$ function .

If $0<r'<|x_1-x_3|$, we partition $B(X,r')$ as $B(X,r')=[x_1-r', x_1]\cup J$. Then it can be treated similarly as the case $|x_1-x_3|\le r'\le \frac r6$.

If $d\ge \frac r3$ and $Cl^{-\frac14}<r'<r$, then it follows from Lemma~\ref{l.s-deri-resonance} and the same argument as the one for type $\I$ functions.
\end{proof}

\section{{\bf Getting started}}\label{getting-started}
Consider the sequence $\{\l_{n}\}_{n\ge N}^{\infty}$ by $\log\l_{n}=\log\l_{n-1}-C\frac{\log q_{n}}{q_{n-1}}\log\l_{n-1}$ with $\l_N=\l$. It is easy to see that for all $\e$, there exists a $\lambda$ such that $\l_n$ decreases to some $\l_{\infty}$ with $\l_{\infty}>\l^{1-\e}$.
For two finite sets $C_j\subset\R/\Z$, $j=1,2$, we define $|C_1-C_2|=\min_{c_1\in C_1, c_2\in C_2}|c_1-c_2|$.

For $n\ge 1$, let $s_n(x)=s[A_n(x)]$ and $u_n(x)=s[A_{-n}(x)]$. Note they may depend on the parameter $t$. These two functions will play the role of $n$-step stable and unstable directions. We call them $n$-step stable and unstable directions, respectively, since it is not very difficult to see that they converge to the stable and unstable directions in case one has a positive Lyapunov exponent, see, for example, the proof of \cite[Theorem 1]{zhang2}. Obviously, we have that $u_1(x)=0$ and
$$
s^t_1(x)=\frac\pi2-\phi(x,t)=\frac\pi2-\cot^{-1}[t-v(x)]=\tan^{-1}[t-v(x)] .
$$
Let us define the following function, $g^t_1$, which is the difference between the first step stable and unstable direction:
\beq\label{g1}
g^t_1(x):=s_1(x)-u_1(x)=\tan^{-1}[t-v(x)].
\eeq

It is not difficult to see that we only need to consider
$$
t\in \CI:=[\inf v-\frac 2\l_0, \sup v+\frac 2\l_0] \mbox{ for all }\l>\l_0,
$$
see, for example, Lemma 11 of \cite{zhang}. From now on, let us restrict $t$ to this interval, and the dependence of $g_1^t$ on $t\in\CI$ will be left implicit.

By (\ref{g1}), it is a straightforward computation to see that for all $t\in\CI$,
\beq\label{g1-bound}
\|g_1\|_{C^2}\le C,\mbox{ and }c\le\left|\frac{d^mg_1}{dx^m}/\frac{d^m(t-v)}{dx^m}\right|\le C
\eeq
for $m=0,1$ and all $x\in\R/\Z$. Thus, for all $t\in\CI$, $\frac{dg_1}{dx}=0$ have the same solution with and $\frac{dv}{dx}=0$, which are $z_1$ and $z_2$. Moreover, it is a straightforward calculation to see that $|\frac{d^2g_1}{dx^2}(z_j)|>c$ for all $t\in\CI$. Clearly, there exists a $r>0$ such that on $B(x_j,r)$, we have for all $t\in\CI$,
\beq\label{deri2-1}
g_1(x)=g_1(z_j)+\frac{d^2g_1}{dx^2}(z_j)(x-z_j)^2+o(x^2)
\eeq
We also assume that, for the above $r$ and for all $t\in\CI$,
\beq\label{deri1-1}
\left|\frac{dg_1}{dx}(x)\right|>cr,\mbox{ for all } x\notin B(z_j,r).
\eeq
By choosing $N$ sufficiently large, we may assume that $q_N^{-2\tau}\ll r$. Let
$$
C_1=\{y: |g_1(y)|=\min_{x\in\R/\Z}|g_1(x)|\} \mbox{ and }I_1=B(C_1,\frac1{2q_N^{2\tau}}).
$$
Clearly, $C_1$ contains at most two points. So we may let
$$
C_1=\{c_{1,1}, c_{1,2}\}\mbox{ and }I_{1,j}=B(c_{1,j},\frac1{2q_N^{2\tau}}),\ j=1,2.
$$
Note it is possible that $c_{1,1}=c_{1,2}$.

\subsection{Step 1}
For each $t\in\CI$, we have one of the following cases.
\vskip 0.2cm
\noindent $(1)_\I$\  {\bf(Type $\mathrm{I}$)}\
$I_1$ consists of two disjoint connected intervals. In other words, $I_{1,1}\cap I_{1,2}=\varnothing$. Then, it is easy to see that $g_1(c_{1,j})=0$ and $|c_{1,1}- c_{1,2}|\geq\frac{1}{q_N^{2\tau}}$. Then by (\ref{deri2-1}) and (\ref{deri1-1}), it is straightforward that $g_1$ is of type $\mathrm{I}$ on $I_{1,1}$ and $I_{1,2}$. Furthermore, if $g_1$ is of type $\mathrm{I_+}$ on $I_{1,1}$, then it is of type $\mathrm{I_-}$ on $I_{1,2}$, vice versa. Let $(1)_\I$ denotes this case.
\vskip 0.2cm
\noindent $(1)_\II$\ {\bf( Type $\mathrm{II}$)} $I_1$ consists of one connected interval. Hence $0\le |c_{1,1}- c_{1,2}|< \frac{1}{q_N^{2\tau}}$. Clearly, $g_1$ is of type $\mathrm{II}$ on $I_1$ in this case. Let $(1)_\II$ denote this case.

\vskip 0.2cm
Thus, by Corollary~\ref{c.distance-away-from-zero}, we have that for each $t$ and each $x\notin I_1$, $|g_1(x)|>cq_N^{-6\tau}$. Let $\eta'_N=\frac{C\log q_N}{\log\l_N}\ll \frac{C\log q_{N+1}}{q_N}\ll1$. Fix a connected interval $I\subset\mathbb R/\mathbb Z$ and $\ell<\l_N^\frac12$. Assume that $x+j\a\notin I_1$ for all $x\in I$ and for all $1\le j\le \ell-1$. Then by Lemma~\ref{l.reduce-form} and Lemma~\ref{l.nstep-deri-estimate}, we have that for $x\in I$, it holds that
\beq\label{s1-c2closeto-sn}
\|s_\ell(x)-s_1(x)\|_{C^2},\ \|u_\ell(x+\ell\a)-u_1(x+\ell\a)\|_{C^2}\le C\l_N^{-\frac32},
\eeq
\beq\label{norm1-growth-deri}
\|A_\ell(x)\|\ge \l_N^{(1-\eta'_N)\ell},\ \left|\frac{d^m\|A_\ell(x)\|}{dx^m}\right|<\|A_\ell(x)\|^{1+m\eta'_N},\ m=1,2.
\eeq

\subsection{From step 1 to step 2}
Define $q_N-1<r^{\pm}_1:I_1\rightarrow\Z^+$ to be the smallest positive number $j$ such that $j>q_N-1$ and $T^{\pm j}x\in I_1$  for $x\in I_1$, respectively. Thus, there exist three possible cases:

\begin{itemize}
\item $I_1$ is in case $(1)_\II$. Thus $r^{\pm}_1(x)$ is the actual first return time by the \emph{Diophantine} condition and we call this the non-resonance case;
\item $I_1$ is in case $(1)_\I$ and $r^{\pm}_1(x)$ is the actual first return time for all $x\in I_1$. We also call this the non-resonance case and denote it by $(1)_{\I,NR}$;
\item $I_1$ is in case $(1)_\I$ and $r^{\pm}_1(x)$ is the second return time for some $x\in I_1$. We call this the resonance case and denote it by $(1)_{\I,R}$.
\end{itemize}
Let $r^{\pm}_1=\min_{x\in I_1}r^{\pm}_1(x)$ and $r_1=\min\{r^{+}_1,r^-_1\}$.

\subsubsection{{\bf Nonresonance case}}
By the \emph{Diophantine} condition, it is easy to see that in cases $(1)_\II$, it holds that $r_1\ge q_N^2$. First note that in this case $I_1$ is a connected interval of length at most $\frac{2}{q_N^{2\tau}}$. Thus, assume $x+n\a\in I_{1}$ for some $n$ and $x\in I_{1}$, then by the \emph{Diophantine} condition $\gamma n^{-\tau+1}<\|n\a\|_{\R/\Z}<2q_N^{-2\tau}$, where $\|\cdot\|_{\R/\Z}$ denotes the distance to the nearest integers. Hence $n>cq_N^{\frac{2\tau}{\tau-1}}>q_N^2$. In case $(1)_{\I,NR}$, by definition, it holds that $r_1\ge q_N$.

On the other hand, we may choose $\l_N$ sufficiently large so that $r^\pm_1(x)\ll\l_N^{\frac12}$. Indeed, we only need to set $\l_N>q_s^2$ for some $q_N^{\tau^2}>q_s>q_N^{2\tau}$. The reason is that  for this $q_s$, we have $\|q_s\a\|_{\R/\Z}<\gamma q_s^{-\tau+1}<q_N^{-2\tau}$ which implies $r^\pm_1(x)\le q_s$. By the \emph{Diophantine} condition, there exists such a $q_s$. Thus in this case, (\ref{s1-c2closeto-sn}) and (\ref{norm1-growth-deri}) can be applied directly to this case with $I=I_1$ and $\ell=r^\pm_1$. Then we get

\beq\label{nonresonance-s1-c2closeto-sn}
\|s_{r^+_1}(x)-s_1(x)\|_{C^2},\ \|u_{r^-_1}(x)-u_1(x)\|_{C^2}\le C\l_N^{-\frac32}
\eeq
\beq\label{nonresonance-norm1-growth-deri}
\|A_{\pm r^\pm_1}(x)\|\ge \l_N^{(1-\eta'_N)r^\pm_1},\ \left|\frac{d^m\|A_{\pm r^\pm_1}(x)\|}{dx^m}\right|<\|A_{\pm r^\pm_1}(x)\|^{1+m\eta'_N},\ m=1,2.
\eeq

Now we consider the function $s_{r^+_1}$, $u_{r^-_1}:I_1\rightarrow\R\PP^1$ and define $g_2=s_{r^+_1}-u_{r^-_1}:I_1\rightarrow\R\PP^1$. Thus we get that as a functions on $I_1$,
\beq\label{Cr-close}
\left\|g_2-g_1\right\|_{C^2}<C\l_{N+1}^{-\frac32}.
\eeq
Combined with our assumption, it is clear that in cases $(1)_{\I,NR}$ and $(1)_\II$, we must be in the following cases.
\vskip 0.2cm
\noindent $(2)_\I$\  {\bf(Type $\mathrm{I}$)}\
$I_1$ consists of two disjoint connected intervals. In other words, $I_{1,1}\cap I_{1,2}=\varnothing$. Furthermore $g_2$ is of type $\mathrm{I}$ on $I_{1,1}$ and $I_{1,2}$. In addition, if $g_2$ is of type $\mathrm{I_+}$ on $I_{1,1}$, then it is of type $\mathrm{I_-}$ on $I_{1,2}$, vice versa. Let $(2)_\I$ denote this case.
\vskip 0.2cm

\noindent $(2)_\II$\ {\bf( Type $\mathrm{II}$)} $I_1$ consists of one connected interval.  $g_2$ is of type $\mathrm{II}$. Let $(2)_\II$ denote this case.
\vskip 0.2cm
In both cases, if we let $C_2=\{y: |g_2(y)|=\min_{x\in\R/\Z}|g_2(x)|\}=\{c_{2,1}, c_{2,2}\}$, then we clearly have $|c_{1,j}-c_{2,j}|<C\l_N^{-1}$.

\subsubsection{{\bf Resonance case}}

Now we consider the case $(1)_{\I,R}$, let $0<k<q_N$ be the actual first return time for some $x$. Note in this case, we must have that $(I_{1,1}\pm k\a)\cap I_{1,2}\neq\varnothing$. Without loss of generality, assume $(I_{1,1}+k\a)\cap I_{1,2}\neq\varnothing$. Also, by the \emph{Diophantine} condition and by the same argument as in nonresonance case, it is not difficult to see that once we have resonance, $(I_{1,1}+j\a)\cap I_{1,2}=\varnothing$ for all $j$ such that $|j|<q_N^2$ and $|j|\neq k$, and $r^\pm_1(x)\ll\l^{\frac12}$ for all $x\in I_1$. Hence, in this case, no matter whether $r^\pm_1(x)$ is the first return or the second return time for $x\in\I_1$, we have $q_N^2<r^{\pm}_1(x)<\l_N^{\frac12}$ for all $x\in I_1$.

Now, for $x\in I_{1,1}$, let us consider
$$
A_{r^+_1}(x)=A_{r^+_1-k}(x+k\a)A_k(x)\mbox{ and } A_{-r^-_1}(x).
$$
Clearly, for $A_{r^+_1-k}(x+k\a)$, $A_k(x)$ and $A_{-r^-_1}(x)$, (\ref{s1-c2closeto-sn}) and (\ref{norm1-growth-deri}) can be applied. Then we get the following facts. First we have
$$
\|u_1(x)-u_{r^-_1}(x)\|_{C^2},\ \|s_k(x)-s_1(x)\|_{C^2},\ \|u_k(x+k\a)-u_1(x+k\a)\|_{C^2} \mbox{ and }
$$
$$
\|s_{r^+_1-k}(x+k\a)-s_1(x+k\a)\|_{C^2}<C\l_{N}^{-\frac32};
$$

Secondly, let $\nu=r^+_1-k$, $-r^-_1$ or $k$. Then
$$
\left|\frac{d^m\|A_{\nu}(x)\|}{dx^m}\right|< \|A_{\nu}(x)\|^{1+m\eta'_N}\mbox{ for }m=1,2.
$$

Finally, again for $\nu=r^+_1-k$, $-r^-_1$ or $k$,
$$
\|A_\nu(x)\|>\l_{N+1}^{|\nu|}.
$$

Now, let us focus on
$$
A_{r^+_1}(x)=A_{r^+_1-k}(x+k\a)A_k(x).
$$
Let $l_k=\|A_k(x)\|$ and $l'=\|A_{r^+_1-k}(x+k\a)\|$. Then by definition, we have
$$
A_{r^+_1}(x)=R_{u_{r^+_1-k}(x+r^+_1\a)}\begin{pmatrix}l'&0\\0&l'^{-1}\end{pmatrix}R_{\frac\pi2-s_{r^+_1-k}(x+k\a)+u_k(x+k\a)}
\begin{pmatrix}l_k&0\\0&l_k^{-1}\end{pmatrix}R_{\frac\pi2-s_k(x)}.
$$
Clearly, we have
$$
\l_{N}^{(1-\eta'_N)k}\le l_k<\l_N^k<\l_N^{q_N}\ll\l_{N}^{(1-\eta'_N)(q_N^2-k)}\le\l_{N}^{(1-\eta'_N)(r^+_1-k)}<l',
$$
which implies that $\|A_{r^{+}_1}(x)\|>\l_{N+1}^{r^{+}_1}\ $.
Moreover, by Lemma~\ref{l.norm-deri}, it is easy to see that for sufficiently large $\l$, we have for all $x\in I_1$ and $m=1,2$,
\beq\label{resonance-norm1-growth-deri}
\left|\frac{d^m\|A_{r^{+}_1}(x)\|}{dx^m}\right|<C\|A_{r^{+}_1}(x)\|^{1+m\eta'_N+Cm\frac{\eta'_N}{q_N}},
\eeq
where $\log\l_{N+1}>(1-\frac{C}{q_N})\log\l_N$.

Now, let us consider the function $g_2$. It is enough to consider $g_2:I_{1,1}\rightarrow\mathbb R\mathbb P^1$. Clearly, $s_{r^+_1}(x)=s[B(x)]:I_1\rightarrow\R\PP^1$ for the following $B$.
$$
B(x)=\begin{pmatrix}l'&0\\0&l'^{-1}\end{pmatrix}R_{\frac\pi2-s_{r^+_1-k}(x+k\a)+u_k(x+k\a)}\begin{pmatrix}l_k&0\\0&l_k^{-1}\end{pmatrix}R_{\frac\pi2-s_k(x)}.
$$
Let $g'_{1,1}=s_k-u_{r^-_1}:I_{1,1}\rightarrow\R\PP^1$ and $g'_{1,2}=s_{r^+_1-k}-u_k:I_{1,2}\rightarrow\R\PP^1$. Thus we have
\beq\label{g'1-close-to-g1}
\left\|g'_{1,j}-g_1\right\|_{C^2,I_{1,j}}<C\l_{N}^{-\frac32},\ j=1,2.
\eeq
Thus $g'_{1,j}$ is of the same type as $g_1$ on $I_{1,j}$. Let $\bar c_{1,j}\in I_{1,j}$ be the zero of $g'_{1,j}$.
Note as in the nonresonance case, we have
\beq\label{1st-intermediate-critical}
|\bar c_{1,j}-c_{1,j}|<C\l_{N}^{-\frac32}\mbox{ for } j=1,2.
\eeq

By the first estimate of (\ref{s-e2-great-e1}) of Lemma~\ref{l.ess-change}, to do the $C^2$ estimate of $g_2$, it suffices to take
\beq\label{g2-on-I11}
g_2(x)=\tan^{-1}\left(l_k^2\tan[g'_{1,2}(x+k\a)]\right)-\frac\pi2+g'_{1,1}(x),\ x\in I_{1,1}.
\eeq
Similarly, for $x\in I_{1,2}$, by considering
$$
A_{r^+_1}(x)\mbox{ and } A_{-r^-_1}(x)=A_{-r^-_1+k}(x-k\a)A_{-k}(x),
$$
we may take
\beq\label{g2-on-I12}
g_2(x)=\tan^{-1}\left(l_k^2\tan[g'_{1,1}(x-k\a)]\right)-\frac\pi2+g'_{1,2}(x),\ x\in I_{1,2}.
\eeq
Thus, by Lemma~\ref{l.s-deri-resonance}, $g_2$ are of type $\mathrm{III}$ on $I_{1,1}\cup (I_{1,2}-k\a)$ and $I_{1,2}\cup (I_{1,1}+k\a)$. Let $d:=\bar c_{1,1}+k\a-\bar c_{1,2}$ and assume without loss of generality $d\ge 0$. Then, depending on the size of $d$, we get the following for step 2.

\vskip0.2cm
\noindent ({\bf Nonresonance}) There is a $d_0$ close to $\frac{q_N^{-2\tau}}{2}$ such that for $d>d_0$, we have
$$
\|g_2-g_1\|_{C^2}<\|g_2-g'_{1,j}\|_{C^2}+\|g'_{1,j}-g_1\|_{C^2}<Cl_k^{-\frac32}+C\l_N^{-\frac32}<C\l_N^{-\frac32}.
$$
by (\ref{type3-1}) and (\ref{g'1-close-to-g1}). Thus, this basically goes to the case $(2)_\I$. In other words, $g_2$ is of the same type as $g_1$ on $I_{1,j}$, $j=1,2$.

\vskip0.2cm
\noindent ({\bf Weak resonance}) If $d\le d_1$, then the drastic change part of graph of $\tan^{-1}(l_k^2\tan[g'_{1,j_1}(x\pm k\a)])$  gradually enters $I_{1,j_2}$, where $j_1\neq j_2\in\{1,2\}$. Hence $g_2$ is of type $\III$ by Remark~\ref{r.type-of-functions}. By Lemma~\ref{l.s-deri-resonance}, there is a $d_0=\eta_1l_k^{-1}\le d_1$ with $q_N^{-4\tau}<\eta_1<q_N^{-4\tau}$ such that for $d>d_0$, $g_2(x)=0$ has two solutions. We say this comes from the case $(1)_{\I,WR}$.

\vskip0.2cm
\noindent ({\bf Strong resonance}) If $0\le d\le d_0$, then $g_2(x)=0$ has only one or even no solution. Here are the essential differences between the resonance case and the nonresonance case. The reason is that we have tangential intersections or even separation of $r_1$-step stable and unstable directions. The separation may also lead to $\mathcal U\mathcal H$ (see Remark \ref{r.uh-stopping}). We say this comes from the case $(1)_{\I,SR}$.\\

We say the last two cases are in case $(2)_\III$ and come from case $(1)_{\I,R}$. Now let us focus on the case $(2)_\III$ and consider without loss of generality $g_2$ on $I_{1,1}$, which is given by (\ref{g2-on-I11}). By the (\ref{type3-zero-order}) and (\ref{type3-zero-1}) of Lemma~\ref{l.s-deri-resonance}, we may let $c_{2,1}\in I_{1,1}$ be the minimal point of $g_2$ that is closer to $\bar c_{1,1}$ than other ones. Thus, by (\ref{type3-zero}), we have $|\bar c_{1,1}-c_{2,1}|<Cl_k^{-\frac34}<C\l_N^{-\frac34}$. This together with (\ref{1st-intermediate-critical}), clearly implies that
$$
|c_{1,1}-c_{2,1}|<C\l_N^{-\frac34}.
$$
Similarly, by considering (\ref{g2-on-I12}), we find a minimal point, $c_{2,2}\in I_{1,2}$, of $g_2$ such that
$$
|c_{1,2}-c_{2,2}|<C\l_N^{-\frac34}.
$$
We say $c_{2,j}$ comes essentially from $c_{1,j}$ for $j=1,2$. It is possible that $g_2$ has one or two minimum points on $I_{1,j}$. Let $C'_2=\{c'_{2,1}, c'_{2,2}\}$ with $c'_{2,1}\in I_{1,2}$ and $c'_{2,2}\in I_{1,1}$ be the possible extra minimum points of $g_2$. By (\ref{g2-on-I11}), (\ref{g2-on-I12}), (\ref{type3-zero-order}) and (\ref{type3-zero-1}), we have that $c'_{2,2}$ is closer to $\bar c_{1,2}-k\a$ than $c_{2,1}$, and $c'_{2,1}$ closer to $\bar c_{1,1}+k\a$ than $c_{2,2}$. On the other hand, $c'_{2,j}$ is essentially on the $k$-orbit of $c_{2,j}$, $j=1,2$, which is illustrated by the following lemma.
\begin{lemma}\label{critical-points-orbit}
Assume we have either $|g_2(c_{2,1})|<C\lambda_{N+1}^{-\frac1{10}r_1}$ or $|g_2(c_{2,2})|<C\lambda_{N+1}^{-\frac1{10}r_1}$, then
$$
|c_{2,1}+k\a-c'_{2,1}|,\ |c_{2,2}-k\a-c'_{2,2}|<C\lambda_{N+1}^{-\frac1{30} r_1}.
$$
\end{lemma}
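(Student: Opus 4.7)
The plan is to exploit a symmetry between the defining formulas (\ref{g2-on-I11}) and (\ref{g2-on-I12}). Setting $g_2|_{I_{1,1}}(x)=0$ in (\ref{g2-on-I11}) and taking $\tan$ of both sides gives
\[
l_k^2\tan[g'_{1,1}(x)]\cdot\tan[g'_{1,2}(x+k\a)]=1,
\]
and the same manipulation applied to (\ref{g2-on-I12}) at $y=x+k\a$ yields exactly this relation. Thus the zero set of $g_2|_{I_{1,1}}$ and that of $g_2|_{I_{1,2}}$ are exchanged by the shift $x\mapsto x+k\a$, and by proximity the zero $c_{2,1}\in I_{1,1}$ (near $\bar c_{1,1}$) is paired with the zero $c'_{2,1}\in I_{1,2}$ (near $\bar c_{1,1}+k\a$). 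Hence in the two-zero regime $d>d_0$, where $g_2(c_{2,1})=0$ exactly, one obtains $c_{2,1}+k\a=c'_{2,1}$ with zero error, and likewise $c_{2,2}-k\a=c'_{2,2}$.

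The interesting case is when $g_2(c_{2,1})=\epsilon_1$ with $0<|\epsilon_1|\le\delta:=C\lambda_{N+1}^{-r_1/10}$. Set $a=g'_{1,1}(c_{2,1})$ and $b=g'_{1,2}(c_{2,1}+k\a)$; both are small because $c_{2,1}\approx\bar c_{1,1}$ (zero of $g'_{1,1}$) and the resonance places $c_{2,1}+k\a\approx\bar c_{1,2}$ (zero of $g'_{1,2}$). Using the asymptotic $\tan^{-1}(l_k^2\tan\xi)-\pi/2=-1/(l_k^2\tan\xi)+O(l_k^{-6})$ valid for $|l_k^2\xi|\gg 1$, the equation $g_2|_{I_{1,1}}(c_{2,1})=\epsilon_1$ reduces to $l_k^2\tan a\cdot\tan b=1+O(\epsilon_1/a)$. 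Solving for $b$ and substituting into $g_2|_{I_{1,2}}(c_{2,1}+k\a)=\tan^{-1}(l_k^2\tan a)-\pi/2+b$, the principal terms cancel by the symmetry and one is left with
\[
\bigl|g_2|_{I_{1,2}}(c_{2,1}+k\a)\bigr|\le C\epsilon_1/(l_k^2 a^2)+(\text{higher order terms}).
\]
The crucial input is the lower bound $|a|\ge c\,l_k^{-1}q_N^{-C\tau}$, which comes from Lemma~\ref{l.s-deri-resonance}: in the regime $d\approx d_0$ the minimum $c_{2,1}$ sits at distance $\gtrsim 1/(l_k r_1 r_2)$ from $\bar c_{1,1}$, and the linear growth $|g'_{1,1}(x)|\ge c r^2|x-\bar c_{1,1}|$ (type $\I_\pm$ property of $g'_{1,1}$) then yields the bound. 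Consequently $|g_2|_{I_{1,2}}(c_{2,1}+k\a)|\le C\delta\cdot q_N^{C\tau}$.

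To conclude, I apply the non-degeneracy (\ref{type3-nondegeneracy}): $|g_2''|>c$ in a neighborhood of $c'_{2,1}$. Since in the regime where $c'_{2,1}$ is defined it is a zero of $g_2|_{I_{1,2}}$, the Taylor expansion around $c'_{2,1}$ gives $|g_2(x)|\ge(c/2)|x-c'_{2,1}|^2$ for $x$ nearby, so
\[
|c_{2,1}+k\a-c'_{2,1}|^2\le C|g_2(c_{2,1}+k\a)|\le C\delta\cdot q_N^{C\tau},
\]
i.e.\ $|c_{2,1}+k\a-c'_{2,1}|\le C\lambda_{N+1}^{-r_1/20}q_N^{C\tau/2}$. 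Because $r_1\ge q_N^2$ and $\lambda_{N+1}$ is sufficiently large, the polynomial factor in $q_N$ is absorbed into $\lambda_{N+1}^{r_1/60}$, and we obtain the desired bound $|c_{2,1}+k\a-c'_{2,1}|<C\lambda_{N+1}^{-r_1/30}$. The symmetric estimate $|c_{2,2}-k\a-c'_{2,2}|<C\lambda_{N+1}^{-r_1/30}$ follows by interchanging the roles of $I_{1,1}$ and $I_{1,2}$ throughout.

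The main obstacle is the quantitative lower bound on $|a|=|g'_{1,1}(c_{2,1})|$: \emph{a priori} $c_{2,1}$ could coincide with $\bar c_{1,1}$, giving $a=0$ and destroying the perturbative cancellation. Pinning down the location of $c_{2,1}$ within the bifurcation window of Lemma~\ref{l.s-deri-resonance}, so that $c_{2,1}$ is separated from $\bar c_{1,1}$ by a definite amount $\gtrsim 1/(l_k r_1 r_2)$, is the most delicate computational step of the argument.
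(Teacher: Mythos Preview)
Your approach is genuinely different from the paper's, and while the underlying idea is sound, it carries a real gap and an unnecessary complication that the paper's route bypasses.

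The paper does \emph{not} work with the model formulas (\ref{g2-on-I11})--(\ref{g2-on-I12}) at all. Instead it proves two short auxiliary facts (Lemma~\ref{l.almost-inv-s-direction} and Corollary~\ref{c.almost-inv-u-direction}) saying that for $\|E_2\|\gg\|E_1\|$, the contracting direction of $E_2E_1$ is, up to $O(\|E\|^{-2})$, the $E_1^{-1}$--image of $s(E_2)$. Applying this to $A_{r_1^+}(x+k\a)\cdot A_k(x)$ and to $A_{-r_1^-+k}(x-k\a)\cdot A_{-k}(x)$ yields, for the \emph{exact} function $g_2=s_{r_1^+}-u_{r_1^-}$, the relation
\[
\bigl|g_2(x)-M\,g_2(x+k\a)\bigr|<C\l_{N+1}^{-2r_1},\qquad l_k^{-2}\le M\le l_k^2,
\]
where $M$ comes from the derivative bound $\|A_k\hat\theta\|^{-2}$ for the projective action. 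From $|g_2(c_{2,1})|\le\l_{N+1}^{-r_1/10}$ one then gets $|g_2(c_{2,1}+k\a)|\le l_k^{2}\l_{N+1}^{-r_1/10}<\l_{N+1}^{-r_1/15}$ directly, and the quadratic nondegeneracy of $g_2$ near $c'_{2,1}$ finishes. No lower bound on $|a|=|g'_{1,1}(c_{2,1})|$ is ever needed.

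Your argument, by contrast, rests on two points that are not secured. First, the identities (\ref{g2-on-I11})--(\ref{g2-on-I12}) are not equalities for the true $g_2$: they are proxies obtained from Lemma~\ref{l.ess-change}, which only asserts \emph{comparability} of the $m$-th derivatives ($m=0,1,2$) with those of the model. Your algebraic cancellation $l_k^2\tan a\cdot\tan b=1$ uses these formulas as exact, so you must separately track the approximation error (it is in fact of order $l_k^{4}l'^{-4}\ll\l_{N+1}^{-r_1}$, but you do not say so). Second---and this is the step you yourself flag as the obstacle---your perturbative bound $|g_2|_{I_{1,2}}(c_{2,1}+k\a)|\lesssim\epsilon_1/(l_k^2a^2)$ is useless without $|a|\gtrsim l_k^{-1}$, and establishing that requires a case analysis on the bifurcation parameter $d$ (the bound fails when $d\gg d_0$, which is fine only because then $\epsilon_1=0$). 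The paper's multiplicative relation with $M\le l_k^2$ gives the weaker but perfectly adequate bound $l_k^2\epsilon_1$ uniformly, sidestepping this entirely. Your route can be made to work, but the paper's is both shorter and more robust.
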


To prove Lemma~\ref{critical-points-orbit}, we need the following lemma and corollary.
\begin{lemma}\label{l.almost-inv-s-direction}
Let $E=E_2E_1\in\mathrm{SL}(2,\mathbb R)$ such that $\|E_2\|\gg \|E_1\|\gg 1$. Then we have
\beq\label{almost-inv-s-direction}
|E_1^{-1}\cdot s(E_2)-s(E)|<C\|E\|^{-2},\ |s(E_2)-E_1\cdot s(E)|<C\|E_2\|^{-2}.
\eeq
\end{lemma}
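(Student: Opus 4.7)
The plan is to prove both inequalities via the variational characterization of the most contracting direction in $\mathrm{SL}(2,\R)$: for any $M$ and unit vector $v$ at angle $\theta$ from $\hat s(M)$, one has $\|Mv\|^2 = \|M\|^2\sin^2\theta + \|M\|^{-2}\cos^2\theta$, so in particular $|\sin\theta| \le \|Mv\|/\|M\|$. This converts smallness of $\|Mv\|$ directly into proximity of $v$ to $\hat s(M)$, and the whole proof reduces to choosing the right candidate $v$ for each inequality and controlling two key norm identities.

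For the first inequality take $v := E_1^{-1}\hat s(E_2)/\rho'$ with $\rho' := \|E_1^{-1}\hat s(E_2)\|$. By construction $Ev = E_2\hat s(E_2)/\rho'$, so $\|Ev\| = 1/(\|E_2\|\rho')$. The crucial identity is
\[
\|E\|^2 = \|E_2\|^2\rho'^2 + O(\|E_1\|^2/\|E_2\|^2),
\]
which I would obtain by writing $E_i$ in polar form (\ref{polar-form}) and reducing to $M := \Lambda_2 R_\psi \Lambda_1$, where $\psi = s(E_2)-u(E_1)$; a direct computation shows that $\mathrm{tr}(M^T M) = \|E_2\|^2(\|E_1\|^2\cos^2\psi + \sin^2\psi/\|E_1\|^2) + O(\|E_1\|^2/\|E_2\|^2)$, and the leading bracket is precisely $\rho'^2$ (as one verifies by computing $\|E_1^{-1}\hat s(E_2)\|^2$ in the same coordinates). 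Since $\det(M^TM)=1$, the largest eigenvalue of $M^TM$ differs from the trace by at most $O(1/\mathrm{tr})$, and the identity follows. The variational bound then yields $|\sin\theta|\le 1/(\|E_2\|\rho'\|E\|) \le C\|E\|^{-2}$, proving the first claim.

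For the second inequality apply the same characterization to $E_2$ with candidate $w := E_1\hat s(E)/\rho''$, $\rho'' := \|E_1\hat s(E)\|$. Then $E_2w = E\hat s(E)/\rho''$ has norm $1/(\|E\|\rho'')$. The companion identity I need is $\rho'\rho'' = 1 + o(1)$, which combined with the first identity gives $\|E\|\rho'' = (\|E\|/\rho')(\rho'\rho'') \approx \|E_2\|$. To establish $\rho'\rho''\approx 1$, apply the first inequality backwards: since $\hat s(E) = v + O(\|E\|^{-2})$ as unit vectors, applying $E_1$ yields $\rho'' = \|E_1 v\| + O(\|E_1\|\|E\|^{-2}) = 1/\rho' + O(\|E_1\|/\|E\|^2)$, and under $\|E_2\|\gg\|E_1\|\gg 1$ this error is dominated by $1/\rho'$. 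The variational bound then gives $|\sin\theta''|\le 1/(\|E_2\|\|E\|\rho'')\le C/\|E_2\|^2$, which is the second claim.

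The main obstacle will be establishing the trace identity uniformly across both the non-resonance regime ($\cos\psi$ bounded below, $\|E\|\sim\|E_1\|\|E_2\|$) and the resonance regime ($\cos\psi$ arbitrarily small, $\|E\|$ dropping as low as $\|E_2\|/\|E_1\|$). The saving feature is that the $\|E_2\|^2$-coefficient in $\mathrm{tr}(M^TM)$ is exactly $\rho'^2$ for every $\psi$, so the identity degenerates correctly even when $\cos\psi\to 0$; the residual error comes only from the subdominant $\|E_2\|^{-1}$ block of $\Lambda_2$, and it is here that the hypothesis $\|E_2\|\gg\|E_1\|$ is used quantitatively.
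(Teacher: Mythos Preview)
Your proof is correct and follows essentially the same approach as the paper's: both take the candidate direction $v=E_1^{-1}\hat s(E_2)/\rho'$, show $\|Ev\|\sim\|E\|^{-1}$ via the identity $\|E\|\approx\|E_2\|\rho'$ obtained from polar decomposition, and then derive the second inequality from the first by replacing $\hat s(E)$ with $v$ (your $\rho'\rho''\approx 1$ is exactly the paper's replacement of $\hat s(E)$ by $w''$). One minor slip to fix when you write it out: after polar decomposition the middle rotation is $R_{\frac\pi2-\psi}$, not $R_\psi$ (cf.\ (\ref{polar-form})), so the trace bracket should read $\|E_1\|^2\sin^2\psi+\|E_1\|^{-2}\cos^2\psi$; with this correction it is indeed $\rho'^2$ as you claim.
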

\begin{proof}
Let $\hat s\in s$ be an unit vector. By polar decomposition, it suffices to consider the case
$$
E_2=\begin{pmatrix}e_2&0\\0&e_2^{-1}\end{pmatrix}R_{\t},\ E_1=\begin{pmatrix}e_1&0\\0&e_1^{-1}\end{pmatrix}.
$$
Then $s(E_2)=\frac\pi2-\t$ and $\tan [E_1^{-1}\cdot s(E_2)]=e_1^2\cot\t$. Assume $\t\neq0$, otherwise it is trivial. Let $w\in E_1^{-1}\cdot s(E_2)$ be a unit vector. Then we have
\begin{align*}
\|Ew\|&=\frac1{\sqrt{1+e_1^4\cot^2\t}}\left\|E\binom{1}{e_1^2\cot\t}\right\|=\frac{e_1e_2^{-1}|\sin\t|^{-1}}{\sqrt{1+e_1^4\cot^2\t}}\\&
=\frac1{\sqrt{e_1^{-2}e_2^2\sin^2\t+e_1^2e_2^2\cot^2\t}}=C\|E\|^{-1},
\end{align*}
which clearly implies the first inequality of (\ref{almost-inv-s-direction}).

For the proof of the second inequality, let $w'\in E_1\cdot s(E)$ be a unit vector. Then we have
$$\|E_2w'\|=\frac{1}{\|E_1\hat s(E)\|}\|E_2E_1\hat s(E)\|=\frac{1}{\|E\|\cdot\|E_1\hat s(E)\|}.
$$
Since $\|E\|\gg \|E_1\|$, by the first inequality of (\ref{almost-inv-s-direction}), we could replace $\hat s(E)$ by a unit vector, $w''$, in $E_1^{-1}\cdot s(E_2)$ in the above estimate. Thus we get
\begin{align*}
\|E_2w'\|&=\frac{C}{\|E\|\cdot\|E_1w''\|}=\frac{C\sqrt{1+e_1^4\cot^2\t}}{\|E\|}\left\|E_1\binom{1}{e_1^2\cot\t}\right\|^{-1}\\&
=\frac{C\sqrt{e_1^{-2}\sin^2\t+e_1^2\cos^2\t}}{\sqrt{e_1^{-2}e_2^2\sin^2\t+e_1^2e_2^2\cot^2\t}}=Ce_2^{-1}\\&=C\|E_2\|^{-1},
\end{align*}
which implies the second inequality of (\ref{almost-inv-s-direction}).
\end{proof}

The following corollary is an immediate consequence of Lemma~\ref{l.almost-inv-s-direction}.
\begin{corollary}\label{c.almost-inv-u-direction}
Let $E_1$ and $E_1$ be as in Lemma~\ref{l.almost-inv-s-direction}. Let $E=E_1\cdot E_2$, then we have
\beq\label{almost-inv-u-direction}
|E_1\cdot u(E_2)-u(E)|<C\|E\|^{-2},\ |u(E_2)-E_1^{-1}\cdot u(E)|<C\|E_2\|^{-2}.
\eeq
\end{corollary}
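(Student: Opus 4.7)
\smallskip
\noindent\textbf{Proof proposal.} The plan is to reduce directly to Lemma~\ref{l.almost-inv-s-direction} by passing to inverses, using the defining relation $u(A)=s(A^{-1})$ and the fact that $\|A^{-1}\|=\|A\|$ for $A\in\mathrm{SL}(2,\R)$. No new estimate is needed; the content of the corollary is simply the lemma read through the $A\mapsto A^{-1}$ symmetry.

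Concretely, I would set $\tilde E_2:=E_2^{-1}$ and $\tilde E_1:=E_1^{-1}$, so that
\[
\tilde E := \tilde E_2\,\tilde E_1 \;=\; E_2^{-1}E_1^{-1} \;=\; (E_1E_2)^{-1} \;=\; E^{-1}.
\]
The hypothesis $\|E_2\|\gg\|E_1\|\gg 1$ of the corollary translates into $\|\tilde E_2\|\gg\|\tilde E_1\|\gg 1$, since norm is preserved under inversion in $\mathrm{SL}(2,\R)$. Hence Lemma~\ref{l.almost-inv-s-direction} applies to the pair $(\tilde E_1,\tilde E_2)$ and yields
\[
|\tilde E_1^{-1}\cdot s(\tilde E_2)-s(\tilde E)|<C\|\tilde E\|^{-2},\qquad |s(\tilde E_2)-\tilde E_1\cdot s(\tilde E)|<C\|\tilde E_2\|^{-2}.
\]

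Rewriting each quantity using $u(\cdot)=s((\cdot)^{-1})$, one has $s(\tilde E_2)=s(E_2^{-1})=u(E_2)$, $s(\tilde E)=s(E^{-1})=u(E)$, and $\tilde E_1^{-1}=E_1$. Substituting yields
\[
|E_1\cdot u(E_2)-u(E)|<C\|E\|^{-2},\qquad |u(E_2)-E_1^{-1}\cdot u(E)|<C\|E_2\|^{-2},
\]
which are the two claimed inequalities.

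Since the derivation is purely formal once the inversion substitution is made, I do not anticipate a genuine obstacle: the only point meriting a brief remark is that the action of $\mathrm{SL}(2,\R)$ on $\R\PP^1$ intertwines the involution $A\mapsto A^{-1}$ with the swap $s\leftrightarrow u$, so the quantities $E_1\cdot u(E_2)$, $E_1^{-1}\cdot u(E)$ appearing in the corollary correspond exactly to $\tilde E_1^{-1}\cdot s(\tilde E_2)$, $\tilde E_1\cdot s(\tilde E)$ in the lemma, and the norm bounds $\|E\|^{-2}$, $\|E_2\|^{-2}$ are unchanged. This makes the corollary an immediate formal consequence of Lemma~\ref{l.almost-inv-s-direction}.
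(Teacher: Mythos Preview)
Your proposal is correct and is essentially identical to the paper's own proof: the paper simply notes that $u(E)=s(E^{-1})$, $u(E_2)=s(E_2^{-1})$, and $E^{-1}=E_2^{-1}\cdot E_1^{-1}$, thereby reducing to Lemma~\ref{l.almost-inv-s-direction}. Your explicit substitution $\tilde E_j=E_j^{-1}$ just spells out this reduction in detail.
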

\begin{proof}
Note that $u(E)=s(E^{-1})$, $u(E_2)=s(E_2^{-1})$ and $E^{-1}=E_2^{-1}\cdot E_1^{-1}$. This reduces the proof to the case in Lemma~\ref{l.almost-inv-s-direction}.
\end{proof}

\begin{remark}
It is probably interesting to point out that Lemma~\ref{l.almost-inv-s-direction} and Corollary~\ref{c.almost-inv-u-direction} are one more precise version of the first estimate of (\ref{s-e2-great-e1}) and the second estimate of (\ref{s-e1-great-e2}) in case of $C^0$ estimate.
\end{remark}

Now we are ready to prove Lemma~\ref{critical-points-orbit}.
\begin{proof}({\bf Proof of Lemma~\ref{critical-points-orbit}}). For $x\in I_{1,1}$, let us consider the following
$$
A_{r^+_1}(x+k\a)\cdot A_k(x)\cdot A_{r^-_1}(x-r^-_1\a).
$$
By Lemma~\ref{l.almost-inv-s-direction} and Corollary~\ref{c.almost-inv-u-direction}, we obtain
$$
|s_{r^+_1}(x)-A_k(x)^{-1}\cdot s_{r^+_1}(x+k\a)|,\ |u_{r^-_1}(x)-A_k(x)^{-1}\cdot u_{r^-_1}(x+k\a)|<C\l_{N+1}^{-2r_1}.
$$
Hence, we have
\beq\label{almost-inv-gap-function1}
\left|g_2(x)-[A_k(x)^{-1}\cdot s_{r^+_1}(x+k\a)-A_k(x)^{-1}\cdot u_{r^-_1}(x+k\a)]\right|<C\l_{N+1}^{-2r_1}.
\eeq
For $A\in\mathrm{SL}(2,\mathbb R)$, consider the induced map $A:\mathbb R\mathbb P^1\rightarrow \mathbb R\mathbb P^1=\R/(\pi\Z)$. For $\t\in\mathbb R\mathbb P^1$, let $\hat\t\in \t$ be a unit vector. Then a direct computation shows that $\frac{dA}{d\t}(\t)=\|A\hat\t\|^{-2}$. Together with (\ref{almost-inv-gap-function1}) and fact that $l_k=\|A_k(x)\|<\l^{k}\ll \l_{N+1}^{r_1}$, this clearly implies that
\beq\label{almost-inv-gap-function2}
|g_2(x)-M\cdot g_2(x+k\a)|<C\l_{N+1}^{-2r_1},\ l_k^{-2}\le M\le l_k^{2}.
\eeq
Note here $g_2$ is a function on $I_{1,1}$. Similarly, we get that, as a function on $I_{1,2}$, $g_2$ satisfies
$$
\left|g_2(x)-[A_k(x)\cdot s_{r^+_1}(x-k\a)-A_k(x)\cdot u_{r^-_1}(x-k\a)]\right|<C\l_{N+1}^{-2r_1}.
$$
Hence,
\beq\label{almost-inv-gap-function3}
|g_2(x)-M\cdot g_2(x-k\a)|<C\l_{N+1}^{-2r_1},\ l_k^{-2}\le M\le l_k^{2}.
\eeq
Without loss of generality, assume that $|g_2(c_{2,1})|<\l_{N+1}^{-\frac1{10}r_1}$. Then, by (\ref{almost-inv-gap-function3}), we have
$$
|g_2(c_{2,1}+k\a)|<Cl_k^{2}\l_{N+1}^{-\frac1{10}r_1}<C\l_{N+1}^{-\frac1{15} r_1}.
$$
Clearly, $0\le |g_2(c'_{2,1})|=|g_2(c_{2,2})|\le |g_2(c_{2,1}+k\a)|<C\l_{N+1}^{-\frac1{15} r_1}$. Now $c'_{2,1}$ is always the minimal point of $g_2$ on $I_{1,2}$ that is closer to $\bar c_{1,1}+k\a$, hence $c_{2,1}+k\a$, than other ones. Thus, by Lemma~\ref{l.s-deri-resonance}, we get that
$$
|c_{2,1}+k\a-c'_{2,1}|<C\l_{N+1}^{-\frac1{30}r_1}.
$$
Note that if $g_2(c_{2,1})\neq0$, then $c_{2,1}=c_{2,2}'$. If $g_2(c_{2,1})=0$, then $g_2(c'_{2,2})=0$. In any case, we get a similar relation between $c'_{2,2}$ and $c_{2,2}$, concluding the proof.
\end{proof}
Finally, note that in case $(2)_\III$, we also have the following estimate by (\ref{norm-deri2}) and (\ref{norm-deri3}) of Lemma~\ref{l.norm-deri}
$$
\|g_2\|_{C^2}<Cl_k^{5}<C\l^{5q_N}.
$$

\subsection{The starting lemma}
To conclude, at step 2, we have the following lemma.
\begin{lemma}[The starting Lemma]\label{l.starting}\quad
Let $g_1=s_1-u_1=\tan^{-1}(t-v):\R/\Z\rightarrow\R\PP^1$. Define
$$
C_1=\{c_{1,1},c_{1,2}\}=\{y:|g_1(y)|=\min_{x\in\R/\Z}|g_1(x)|\},\ I_{1,j}=\{x:|x-c_{1,j}|\le \frac{1}{2q_N^{2\tau}}\}
$$
and $I_1=I_{1,1}\cup I_{1,2}$. Let $q_{N}-1<r^{\pm}_1(x):I_1\rightarrow\mathbb Z^+$ be the first return time after time $q_N-1$, where $r^+_1$ is the forward return and $r^-_1$ backward. Let $r^{\pm}_1=\min_{x\in\R/\Z}r^{\pm}_1(x)$, $r_1=\min\{r^{+}_1,r^-_1\}$ and $r_0=1$. Let $g_2=s_{r^+_1}-u_{r^-_1}:I_1\rightarrow\R\PP^1$ and define
 $\eta_N=\frac{\log q_{N+1}}{q_N}$ and $\log\l_{N+1}>(1-C\eta_N)\log\l_N$. Then there exists a set
$$
C_2=\{c_{2,1},c_{2,2}\}\subset\{y:|g_2(y)|=\min_{x\in I_1}|g_2(x)|\}\mbox{ such that }
$$
\beq\label{c1-close-c2}
|c_{1,j}-c_{2,j}|<C\l_{N}^{-\frac34},\ j=1,2;
\eeq
Moreover, for all $x\in I_1$ and $m=1,2$, it holds that
\beq\label{norm-1}
\|A_{\pm r^{\pm}_1}(x)\|>\l_{N+1}^{r^{\pm}_1},\ \frac{d^m\|A_{\pm r^{\pm}_1}(x)\|}{dx^m}<\|A_{\pm r^{\pm}_1}(x)\|^{1+m\eta_N};
\eeq
if $I_1$ consists of one connected interval, then $g_2$ is of type $\II$ on $I_1$; if $I_{1,1}\cap I_{1,2}=\varnothing$, then in nonresonance case, $g_2$ is either of type $\I$; in resonance case $g_1$ is of type $\III$ on each $I_{1,j}$. In other words, we have three different cases: $(2)_\I$, $(2)_\II$ and $(2)_\III$.
\vskip 0.2cm

In case $(2)_\I$ and $(2)_\II$, there is no other minimal point of $g_2$ than those in $C_2$, and we have that
\beq\label{g2-closeto-g1}
\|g_2-g_1\|_{C^2}\le C\l_{N}^{-\frac32} \mbox{ and } \|g_2\|_{C^2}<C.
\eeq
Moreover, in case $(2)_\I$, if $g_2$ is of type $\I_+$ on $I_{1,1}$, then it is of type $\I_-$ on $I_{1,2}$, vice versa.

In case $(2)_\III$, there are two more minimal points $c'_{2,1}, c'_{2,2}$ with $c'_{2,1}\in I_{1,2}$, $c'_{2,2}\in I_{1,1}$ such that $g_2(c'_{2,1})=g_2(c_{2,2})$ and $g_2(c'_{2,2})=g_2(c_{2,1})$. However, $c_{2,j}$ is always the one that comes essentially from $c_{1,j}$ for $j=1,2$ while it is possible that $c'_{2,1}=c_{2,2}$ and $c'_{2,2}=c_{2,1}$.

Moreover, if $|g_2(c_{2,j})|<C\l_{N+1}^{-\frac1{10}r_1}$ for $j=1$ or $j=2$, then we have
\beq\label{2-orbit-critical-points}
|c_{2,1}+k\a-c'_{2,1}|,\ |c_{2,2}-k\a-c'_{2,2}|<C\lambda_{N+1}^{-\frac1{30} r_1}
\eeq
with $1\le k< q_N$. In this case, we also have
\beq\label{g2-upbound}
r_1\ge q_N^2 \mbox{ and } \|g_2\|_{C^2}\le C\l^{5q_N}.
\eeq
\end{lemma}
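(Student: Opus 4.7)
The plan is to assemble the lemma by consolidating the three-stage analysis already sketched in this section (Step 1, nonresonance to Step 2, resonance to Step 2) into the packaged form of the statement. First I would record that $g_1 = \tan^{-1}(t-v)$ satisfies the $C^2$ bounds \eqref{g1-bound}, so by \eqref{deri2-1}--\eqref{deri1-1} the minimal set $C_1$ falls into exactly one of the two cases $(1)_\I$ or $(1)_\II$, and the first-return time dichotomy splits $(1)_\I$ further into nonresonance $(1)_{\I,NR}$ and resonance $(1)_{\I,R}$. A short Diophantine calculation shows $r_1 \ge q_N^2$ in both nonresonance cases and $r^\pm_1(x) \ll \l_N^{1/2}$ in every case, so Lemma~\ref{l.nstep-deri-estimate} is applicable on $I_1$ with $\ell = r^\pm_1$.

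In the nonresonance cases I would apply Lemma~\ref{l.reduce-form} and Lemma~\ref{l.nstep-deri-estimate} directly to the product $A_{\pm r^\pm_1}$, obtaining \eqref{nonresonance-s1-c2closeto-sn}--\eqref{nonresonance-norm1-growth-deri}; this yields \eqref{norm-1} and the perturbation bound $\|g_2 - g_1\|_{C^2} < C\l_N^{-3/2}$ in \eqref{g2-closeto-g1}. Since $g_1$ is of type $\I$ or $\II$ and these types are stable under $C^2$-small perturbations (Remark~\ref{r.type-of-functions}), $g_2$ inherits the same type and its unique minima $c_{2,j}$ lie within $C\l_N^{-1}$ of $c_{1,j}$, giving \eqref{c1-close-c2}. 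The sign alternation between $I_{1,1}$ and $I_{1,2}$ in case $(2)_\I$ is inherited from $g_1$ via the closeness.

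For the resonance case $(1)_{\I,R}$, I would factor $A_{r^+_1}(x) = A_{r^+_1-k}(x+k\a)\,A_k(x)$ with $1 \le k < q_N$; each block separately satisfies the conclusions of Lemma~\ref{l.nstep-deri-estimate}, so Lemma~\ref{l.norm-deri} applied to the concatenation furnishes \eqref{resonance-norm1-growth-deri}, which is exactly \eqref{norm-1} after absorbing the $1/q_N$ correction into $\eta_N$ and into the redefinition $\log\l_{N+1} > (1 - C\eta_N)\log\l_N$. Unfolding $s_{r^+_1}$ with the first estimate of \eqref{s-e2-great-e1} produces the representation \eqref{g2-on-I11}--\eqref{g2-on-I12}, exhibiting $g_2$ as a type $\III$ function in the sense of Definition~\ref{d.type-of-functions}; Lemma~\ref{l.s-deri-resonance} then gives $|c_{1,j} - c_{2,j}| < C\l_N^{-3/4}$ via \eqref{type3-zero} together with \eqref{g'1-close-to-g1}, and the $C^2$ bound in \eqref{g2-upbound} follows from \eqref{norm-deri2}--\eqref{norm-deri3}.

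The most delicate part is the orbit relation \eqref{2-orbit-critical-points} between the principal minima $c_{2,j}$ and the auxiliary minima $c'_{2,j}$, the content of Lemma~\ref{critical-points-orbit}. Here the main obstacle is establishing the approximate equivariance $|g_2(x) - M\cdot g_2(x\pm k\a)| < C\l_{N+1}^{-2r_1}$ with $l_k^{-2}\le M\le l_k^2$; I would prove it by inserting $A_k(x)$ into the chain defining $s_{r^+_1}$ and $u_{r^-_1}$ and applying Lemma~\ref{l.almost-inv-s-direction} and Corollary~\ref{c.almost-inv-u-direction} to transfer the stable and unstable directions from $x+k\a$ back to $x$, combined with the elementary identity $\frac{dA}{d\t} = \|A\hat\t\|^{-2}$ to bound the push-forward on $\R\PP^1$. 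The smallness hypothesis $|g_2(c_{2,j})| < C\l_{N+1}^{-r_1/10}$ then forces $|g_2(c_{2,j} \pm k\a)| < C\l_{N+1}^{-r_1/15}$, and the quantitative type $\III$ location statements \eqref{type3-zero}--\eqref{type3-zero-1} pin $c'_{2,j}$ to within $C\l_{N+1}^{-r_1/30}$ of $c_{2,j} \pm k\a$, completing the proof.
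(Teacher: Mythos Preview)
Your proposal is correct and follows essentially the same approach as the paper: the Starting Lemma is not proved separately but is a consolidation of the analysis in Sections~3.1--3.2 together with Lemma~\ref{critical-points-orbit}, and you have correctly identified all the ingredients (Lemma~\ref{l.nstep-deri-estimate} for nonresonance, the factorization $A_{r^+_1}=A_{r^+_1-k}A_k$ with Lemma~\ref{l.norm-deri} for resonance, Lemma~\ref{l.s-deri-resonance} for the type~$\III$ structure, and Lemma~\ref{l.almost-inv-s-direction}/Corollary~\ref{c.almost-inv-u-direction} for the orbit relation).

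One small inaccuracy: you write ``$r_1 \ge q_N^2$ in both nonresonance cases,'' but in case $(1)_{\I,NR}$ the paper only obtains $r_1 \ge q_N$ (the two components of $I_1$ can be far apart, so the first return to $I_1$ need not be a near-return in $\R/\Z$). This does not affect your argument, since the only place the lemma claims $r_1 \ge q_N^2$ is in the resonance conclusion \eqref{g2-upbound}, and for the nonresonance application of Lemma~\ref{l.nstep-deri-estimate} all you need is $r^\pm_1 \ll \l_N^{1/2}$.
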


\begin{remark}\label{r.small-error}
In Section~\ref{getting-started}, instead of $r^\pm_{1}(x)$, sometimes, we used $r^\pm_{1}$, $r_1$ and even $r^\pm_1\pm k$, $1\le k\le q_N-1$. The difference between the usage of $r^\pm_1(x)$, $r^\pm_1$, $r_1$ and $r_1\pm k$ are negligible. In fact, by the second estimate of (\ref{norm-deri2}) and (\ref{norm-deri3}) in Lemma~\ref{l.norm-deri}, it is easy to see that these differences produces errors of order at most $\l_{N+1}^{-\frac 32 r_1}$, which is clearly not important in all the necessary estimates. Similarly, in the following discussions, we will not distinguish the difference between $r^\pm_i(x)$, $r^\pm_i$, $r_i$ and $r_i\pm k$, where $i\ge1$ and $1\le k\le q_{N+i-1}$.
\end{remark}

\section{{\bf The Induction}}\label{induction}
Now we are ready to do the induction.
\subsection{Statement of the Induction Theorem}
We formulate our induction as the following theorem.
\begin{theorem}[Iteration Lemma]\label{t.iteration}
{\bf Step (i+1).} Let $g_i=s_{r^+_{i-1}}-u_{r^-_{i-1}}:I_{i-1}\rightarrow\R\PP^1$. Assume we have
$$
C_i=\{c_{i,1}, c_{i,2}\}\subset\{y:|g_i(y)|=\min_{x\in I_{i-1}}|g_i(x)|\} \mbox{ and } I_{i,j}=\{x:|x-c_{i,j}|\le \frac{1}{2^iq_{N+i-1}^{2\tau}}\}
$$
and $I_i=I_{i,1}\cup I_{i,2}$. Let $q_{N+i-1}<r^{\pm}_i(x):I_i\rightarrow\Z^+$ be the first return time after time $q_{N+i-1}$. Let $r_i=\min\{r^+_{i},r^-_{i}\}$ with $r^{\pm}_i=\min_{x\in I_i}r^{\pm}_i(x)$. Let $g_{i+1}=s_{r^+_i}-u_{r^-_i}:I_i\rightarrow\R\PP^1$ and assume we have
$$
C_{i+1}=\{c_{i+1,1}, c_{i+1,2}\}\subset\{y:|g_{i+1}(y)|=\min_{x\in I_i}|g_{i+1}(x)|\} \mbox{ such that}
$$
\beq\label{ci-close-ci+1}
|c_{i,j}-c_{i+1,j}|<C\l_{N+i}^{-\frac 34 r_{i-1}},\ j=1,2,
\eeq
where  $\log\l_{N+i}>(1-C\eta_{N+i-1})\log\l_{N+i-1}$ with $\eta_{N+i-1}=\frac{\log q_{N+i}}{q_{N+i-1}}$. Assume
 for all $x\in I_i$, $m=1,2$, it holds that
\beq\label{norm-i+1}
\|A_{\pm r^{\pm}_i}(x)\|>\l_{N+i}^{r^{\pm}_i},\ \frac{d^m\|A_{\pm r^{\pm}_i}(x)\|}{dx^m}<\|A_{\pm r^{\pm}_i}(x)\|^{1+mC\sum^{N+i-1}_{j=N}\eta_{j}},
\eeq
and $g_{i+1}:I_i\rightarrow\mathbb R\mathbb P^1$ is of type $\I$, $\II$ or $\III$, which are denoted as cases $(i+1)_\I$, $(i+1)_\II$ and $(i+1)_\III$.

Assume that in case $(i+1)_\I$ and $(i+1)_\II$, there is no other minimal point of $g_{i+1}$ than those in $C_{i+1}$, and we have
\beq\label{gi+1-closeto-gi}
\|g_{i+1}-g_i\|_{C^2}\le C\l_{N+i-1}^{-\frac32 r_{i-1}}\mbox { and } \|g_{i+1}\|_{C^2}\le C.
\eeq
Moreover, in case $(i+1)_\I$, assume that if $g_{i+1}$ is of type $\I_+$ on $I_{i,1}$, then it is of type $\I_-$ on $I_{i,2}$, vice versa.

Assume that in case $(i+1)_\III$, there are two more minimal points $c'_{i+1,1}, c'_{i+1,2}$ with $c'_{i+1,1}\in I_{1,2}$, $c'_{i+1,2}\in I_{1,1}$ such that $g_{i+1}(c'_{i+1,1})=g_{i+1}(c_{i+1,2})$ and $g_{i+1}(c'_{i+1,2})=g_{i+1}(c_{i+1,1})$. Assume $c_{i+1,j}$ is always the one comes essentially from $c_{i,j}$ while it is possible that $c'_{i+1,1}=c_{i+1,2}$ and $c'_{i+1,2}=c_{i+1,1}$.

If $|g_{i+1}(c_{i+1,j})|<C\l_{N+i}^{-\frac1{10}r_{i}}$ for $j=1$ or $j=2$, assume we have
\beq\label{i+1-orbit-critical-points}
|c_{i+1,1}+k\a-c'_{i+1,1}|,\ |c_{i+1,2}-k\a-c'_{i+1,2}|<C\l_{N+i}^{-\frac1{30} r_i}.
\eeq
with $1\le k< q_{N+i-1}$. In this case, we also assume that
\beq\label{gi+1-upbound}
r_i\ge q_{N+i-1}^2\mbox{ and }\|g_{i+1}\|_{C^2}\le C\l^{5q_{N+i-1}}.
\eeq

Then we have the following.
\vskip 0.4cm

\noindent {\bf Step (i+2).} Let $I_{i+1,j}=\{x:|x-C_{i+1,j}|\le \frac{1}{2^{i+1}q_{N+i}^{2\tau}}\}$ and $I_{i+1}=I_{i+1,1}\cup I_{i+2,j}$. Let $q_{N+i}<r^{\pm}_{i+1}(x):I_{i+1}\rightarrow\Z^+$ be the first return time after time $q_{N+i}$. Let $r^{\pm}_{i+1}=\min_{x\in I_{i+1}}r^{\pm}_{i+1}(x)$ and $r_i=\min\{r^+_i,r^-_i\}$. Let $g_{i+2}=s_{r^+_{i+1}}-u_{r^-_{i+1}}:I_{i+1}\rightarrow\R\PP^1$.
Define $\eta_{N+i}=\frac{\log q_{N+i+1}}{q_{N+i}}$ and $\log\l_{N+i+1}>(1-C\eta_{N+i})\log\l_{N+i}$. Then there exists a set
$$
C_{i+2}=\{c_{i+2,1},\ c_{i+2,2}\}\subset\{y:|g_{i+2}(y)|=\min_{x\in I_{i+1}}|g_{i+2}(x)|\}
$$ such that
\beq\label{ci-close-ci+2}
|c_{i+1,j}-c_{i+2,j}|<C\l_{N+i}^{-\frac34r_{i}},\ j=1,2;
\eeq
and for all $x\in I_{i+1}$ and $m=1,2$, it holds that
\beq\label{norm-i+2}
\|A_{\pm r^{\pm}_{i+1}}(x)\|>\l_{N+i+1}^{r^{\pm}_{i+1}},\ \frac{d^m\|A_{\pm r^{\pm}_{i+1}}(x)\|}{dx^m}<\|A_{\pm r^{\pm}_{i+1}}(x)\|^{1+mC\sum^{N+i}_{j=N}\eta_{j}}
\eeq
and $g_{i+2}:I_{i+1}\rightarrow\mathbb R\mathbb P^1$ is of types $\I$, $\II$ or $\III$, which are denoted as case $(i+2)_\I$, $(i+2)_\II$ and $(i+2)_\III$. In addition, in case $(i+2)_\I$ and $(i+2)_\II$, there is no other minimal point of $g_{i+2}$ than those in $C_{i+2}$, and we have
\beq\label{gi+2-closeto-gi+1}
\|g_{i+2}-g_{i+1}\|_{C^2}\le C\l_{N+i}^{-\frac32 r_{i}} \mbox{ and }\|g_{i+2}\|_{C^2}\le C.
\eeq
Moreover, in case $(i+2)_\I$, if $g_{i+2}$ is of type $\I_+$ on $I_{i+1,1}$, then it is of type $\I_-$ on $I_{i+1,2}$, vice versa.

In case $(i+2)_\III$, there are two more minimal points $c'_{i+2,1},c'_{i+2,2}$ such that $g_{i+2}(c'_{i+2,1})=g_{i+2}(c_{i+2,2})$ and $g_{i+2}(c'_{i+2,2})=g_{i+2}(c_{i+2,1})$. However, $c_{i+2,j}$ is always the one that comes essentially from $c_{i+1,j}$ while it is possible that $c'_{i+2,1}=c_{i+2,2}$ and $c'_{i+2,2}=c_{i+2,1}$. Moreover, it holds that
\begin{itemize}
\item if $|g_{i+1}(c_{i+1,j})|>C\l_{N+i}^{-\frac1{10}r_{i}}$, $j=1,2$, then so are $|g_{i+2}(c_{i+2,j})|$, $j=1,2$;
\item if $|g_{i+2}(c_{i+2,j})|<C\l_{N+i+1}^{-\frac1{10}r_{i+1}}$ for $j=1$ or $j=2$, then
      \beq\label{i+2-orbit-critical-points}
      |c_{i+2,1}+k\a-c'_{i+2,1}|,\ |c_{i+2,2}-k\a-c'_{i+2,2}|<C\l_{N+i+1}^{-\frac1{30} r_{i+1}}
      \eeq
      with $1\le k< q_{N+i}$.
\end{itemize}
In this case, we also have
\beq\label{gi+2-upbound}
r_{i+1}\ge q_{N+i}^2 \mbox{ and } \|g_{i+2}\|_{C^2}\le C\l^{5q_{N+i}}.
\eeq
\end{theorem}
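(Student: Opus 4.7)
The plan is to bootstrap from step $(i+1)$ to step $(i+2)$ by following the same template laid out in the pass from step $1$ to step $2$ of Section~\ref{getting-started}, now using as elementary blocks the return cocycles $A_{\pm r^\pm_i}$ whose good norm and derivative control is supplied by the inductive hypothesis (\ref{norm-i+1}), in place of the single map $A$. The Diophantine condition together with $|I_{i+1,j}| \le 2^{-(i+1)}q_{N+i}^{-2\tau}$ immediately gives two regimes: either for every $1 \le k < q_{N+i}$ one has $(I_{i+1,1}+k\alpha) \cap I_{i+1,2} = \emptyset$ (nonresonance), which forces any return to obey $r^\pm_{i+1}(x) \ge cq_{N+i}^{2\tau/(\tau-1)} > q_{N+i}^2$; or there is a unique such resonant $k$ (resonance), in which case the actual returns still satisfy $r^\pm_{i+1}(x) \ge q_{N+i}^2$ by the same Diophantine argument applied to the nonresonant neighbors. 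In both regimes $r^\pm_{i+1}(x) \ll \lambda_{N+i}^{1/2}$ upon taking $\lambda_{N+i}$ large, which is essential for applying the concatenation Lemma~\ref{l.nstep-deri-estimate}.

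In the nonresonance regime, for each $x \in I_{i+1}$ the matrix $A_{r^+_{i+1}}(x)$ factors as a concatenation of blocks $A_{\pm r^\pm_i}$ whose intermediate base points lie outside $I_{i+1}$; by Corollary~\ref{c.distance-away-from-zero} applied to $g_{i+1}$ these intermediate points satisfy a uniform nonresonance lower bound $|s^{(\ell)}-u^{(\ell-1)}| > cq_{N+i}^{-6\tau}$, much larger than any negative power of the norms occurring. Lemma~\ref{l.nstep-deri-estimate} then yields (\ref{norm-i+2}) directly and, through its $C^2$ closeness conclusions, $\|g_{i+2}-g_{i+1}\|_{C^2} < C\lambda_{N+i}^{-\frac32 r_i}$, which is (\ref{gi+2-closeto-gi+1}). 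Because the perturbation is tiny relative to the nondegeneracy built into types $\I$ and $\II$, $g_{i+2}$ inherits the same type on each $I_{i+1,j}$, giving (\ref{ci-close-ci+2}) and the $\I_{\pm}$ alternation, mirroring the passage $(1)_\II, (1)_{\I,NR} \to (2)_\II, (2)_\I$.

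In the resonance regime we decompose, for $x \in I_{i+1,1}$,
\[
A_{r^+_{i+1}}(x) = A_{r^+_{i+1}-k}(x+k\alpha) \cdot A_k(x),
\]
and likewise $A_{-r^-_{i+1}}(x)$ backwards; each factor is an $n$-step concatenation of $A_{\pm r^\pm_i}$ blocks in the nonresonant regime of Lemma~\ref{l.nstep-deri-estimate}, so its $s$ and $u$ directions are $C^2$-close to the step-$(i+1)$ ones. Feeding this factorization into the first estimate of (\ref{s-e2-great-e1}) of Lemma~\ref{l.ess-change} (since $\|A_{r^+_{i+1}-k}\| \gg \|A_k\|$) expresses $g_{i+2}$ in the form
\[
g_{i+2}(x) = \tan^{-1}\!\bigl(l_k^{2}\tan[g'_{i+1,2}(x+k\alpha)]\bigr) - \tfrac\pi2 + g'_{i+1,1}(x),
\]
with $g'_{i+1,j}$ small $C^2$-perturbations of $g_{i+1}|_{I_{i+1,j}}$. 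By Remark~\ref{r.type-of-functions}, on $I_{i+1,j}$ the latter is of type $\I$ regardless of whether $g_{i+1}$ was type $\I$, $\II$, or $\III$ globally; hence the display is precisely the form (\ref{type3}) of Definition~\ref{d.type-of-functions}. Lemma~\ref{l.s-deri-resonance} then certifies $g_{i+2}$ as type $\III$, locates the new critical points $c_{i+2,j}$ within $Cl_k^{-3/4} < C\lambda_{N+i}^{-\frac34 r_i}$ of $c_{i+1,j}$ (giving (\ref{ci-close-ci+2})), and distinguishes the strong/weak/non resonant sub-cases via its bifurcation parameter $d$. The $C^2$ upper bound in (\ref{gi+2-upbound}) and the refined derivative control in (\ref{norm-i+2}) follow from Lemma~\ref{l.norm-deri} using (\ref{norm-deri1}) and the upper bounds in (\ref{norm-deri2})--(\ref{norm-deri3}), and $r_{i+1} \ge q_{N+i}^2$ from the Diophantine input. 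For the orbit relation (\ref{i+2-orbit-critical-points}) we repeat the proof of Lemma~\ref{critical-points-orbit}: Lemma~\ref{l.almost-inv-s-direction} and Corollary~\ref{c.almost-inv-u-direction}, applied to the triple composition through $A_k(x)$, yield the $k$-shift near-invariance analogous to (\ref{almost-inv-gap-function2}), from which the bound follows.

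The main obstacle is the case $(i+1)_\III \to (i+2)_\ast$, where $\|g_{i+1}\|_{C^2}$ is only bounded by $C\lambda^{5q_{N+i-1}}$ rather than $C$, so naive $C^2$ closeness of $g'_{i+1,j}$ to $g_{i+1}$ does not give a type-preserving perturbation. The resolution, foreshadowed by Remark~\ref{r.type-of-functions}, is that the drastic $\tan^{-1}(l^2\tan(\cdot))$ contribution in a type $\III$ function lives on scales $\lesssim l^{-1/4}$ around $c_{i+1,j}$; thus the new interval $I_{i+1,j}$ of length $\sim q_{N+i}^{-2\tau}$ either contains the entire resonant feature (in which case we track $f_1, f_2$ themselves, reduced to their type $\I$ structure on $I_{i+1,j}$) or lies in a region where $g_{i+1}|_{I_{i+1,j}}$ is already of type $\I$ with bounded $C^2$ norm. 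The small discrepancies between $r^\pm_{i+1}$, $r^\pm_{i+1}(x)$, and the nearby return times contribute errors of size at most $\lambda_{N+i+1}^{-\frac32 r_{i+1}}$ (Remark~\ref{r.small-error}), well within the advertised tolerances, so the remaining work is careful bookkeeping rather than additional estimation.
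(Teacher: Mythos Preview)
Your overall plan matches the paper's: split into nonresonance and resonance at scale $q_{N+i}$, concatenate $r_i^\pm$-blocks via Lemma~\ref{l.nstep-deri-estimate} in the former, and feed the two-factor decomposition through Lemmas~\ref{l.ess-change}, \ref{l.norm-deri}, \ref{l.s-deri-resonance} in the latter. The nonresonance paragraph and the bookkeeping around Remark~\ref{r.small-error} are fine.

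There is, however, a genuine gap in your handling of case $(i+1)_\III$. Your proposed resolution of the large $C^2$ bound $\|g_{i+1}\|_{C^2}\le C\lambda^{5q_{N+i-1}}$ is to argue geometrically that the drastic $\tan^{-1}$ feature either sits entirely inside $I_{i+1,j}$ (then ``track $f_1,f_2$'') or entirely outside (then $g_{i+1}|_{I_{i+1,j}}$ is type $\I$). But ``tracking $f_1,f_2$'' does not tell you how to verify the hypotheses of Lemma~\ref{l.nstep-deri-estimate} for the block concatenation: the angle functions $s^{(\ell)},u^{(\ell)}$ of the intermediate blocks are $s_{r_i^+},u_{r_i^-}$ evaluated at points of $I_i\setminus I_{i+1}$, and these inherit the large $C^2$ norm regardless of what you choose to ``track''. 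The paper's resolution is different and quantitative: it splits on the separation $d=|c_{i+1,1}-c'_{i+1,2}|$. If $d>d_0$ then indeed $g_{i+1}|_{I_{i+1,j}}$ is essentially type~$\I$ and one proceeds as in $(i+1)_\I$ (this is also the only sub-case of $(i+1)_\III$ in which a \emph{new} resonance can occur, which is why your ``type $\I$ regardless'' claim happens to be true in the resonance paragraph, though not for the reason Remark~\ref{r.type-of-functions} gives). If $0<d\le d_0$, the inherited $k$-resonance with $k<q_{N+i-1}$ forces $(I_{i+1,1}+k\alpha)\cap I_{i+1,2}\neq\varnothing$, hence by Diophantine $r_{i+1}\ge q_{N+i}^2$ and, crucially, the block boundaries $j_m\ge q_{N+i-1}>k$ all land in $I_i\setminus I_{i+1}$. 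One then invokes the standing hypothesis $r_i\ge q_{N+i-1}^2$ from (\ref{gi+1-upbound}): the minimal block norm satisfies $\lambda'\ge\lambda_{N+i}^{r_i}\ge\lambda_{N+i}^{q_{N+i-1}^2}$, so $\lambda^{5q_{N+i-1}}<(\lambda')^\eta$ for $\eta$ of order $q_{N+i-1}^{-1}$, and Lemma~\ref{l.nstep-deri-estimate} applies directly with this $\eta$, yielding $\|g_{i+2}-g_{i+1}\|_{C^2}<C\lambda_{N+i}^{-\frac32 r_i}$. This is the missing idea in your sketch; without it the $(i+1)_\III\to(i+2)_\III$ inheritance step does not close.

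A secondary omission: the persistence statement ``if $|g_{i+1}(c_{i+1,j})|>C\lambda_{N+i}^{-r_i/10}$ then so is $|g_{i+2}(c_{i+2,j})|$'' requires a separate argument via the $C^0$ version of Lemma~\ref{l.nstep-deri-estimate} (Remark~\ref{C0-or-C1}), since in that regime the angle gaps at block boundaries are only $\ge C\lambda_{N+i}^{-r_i/10}$ rather than polynomially small in $q_{N+i}$.
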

\begin{remark}\label{r.returntime-not-large}
By the same argument that $r^\pm_1(x)<\l_N^\frac12$, it is easy to see that $r^\pm_{i}\ll\l_{N+i-1}^{\frac12 q_{N+i-2}}$ for all $i\ge 2$. Indeed, $r^\pm_i$ is no larger than the first return time of $x\in I_{i,j}$ back to $I_{i,j}$ for $j=1$ or $2$. Note $|I_{i,j}|=q_{N+i-1}^{-2\tau}$. Then it is sufficient to take the first $q_n$ such that $q_n>q_{N+i-1}^{2\tau}>q_{N+i-2}^{2\tau(\tau-1)}$. Then $\|q_n\a\|<cq_n^{-\tau+1}<q_{N+i-1}^{-2\tau}$. Thus, $r^\pm_i(x)<q_n$, which by the \emph{Diophantine} condition is polynomially large in $q_{N+i-2}$ with polynomial depending only on $\a$.
\end{remark}

\subsection{Proof of the Induction}
To study the $g_{i+2}:I_{i+1}\rightarrow\R\PP^1$, we need to consider $g_{i+1}:I_{i+1}\rightarrow\R\PP^1$. Let us start with the case $(i+1)_\I$. Clearly, $g_{i+1}:I_{i+1}\rightarrow\R\PP^1$ inherited all the conditions of $g_{i+1}:I_{i}\rightarrow\R\PP^1$ that are assumed in the induction step $(i+1)_\I$. As in Section 2.1, we divide it into cases $(i+1)_{\I,NR}$ and $(i+1)_{\I,R}$. We first consider the nonresonance case $(i+1)_{\I,NR}$. \\

\noindent ${\bf {Case }\ (i+1)_{\I,NR}}$. In this case, $(I_{i+1,1}+l\a)\cap I_{i+1,2}=\varnothing$ for all $|l|\le q_{N+i}-1$. Hence $r^{\pm}_{i+1}(x)$ is the actually first return time for each $x\in I_{i+1}$. If $r^+_{i+1}=r^+_{i}$, then $g_{i+2}=g_{i+1}$ and we have nothing to say. Otherwise, let $0\le j_m\le r^+_{i+1}$, $0\le m\le p$ be the times such that
\beq\label{i+1nr-notin-Ii+1}
j_{m+1}-j_{m}\ge q_{N+i-1},\ 0\le m\le p-1 \mbox{ and } x+j_m\a\in I_i\setminus I_{i+1},\ 1\le m\le p-1,
\eeq
where $j_0=0$ and $j_p=r^+_{i+1}$. Now consider the sequence
\beq\label{i+1nr-sequence}
A_{j_1}(x), A_{j_2}(x+j_1\a), \ldots, A_{j_p-j_{p-1}}(x+j_{p-1}\a).
\eeq
Then, we have the following.
\vskip0.2cm
First, for $1\le m\le p$, $\|A_{j_m-j_{m-1}}(x+j_{m-1}\a)\|\ge\l_{N+i}^{j_m-j_{m-1}}\ge\l_{N+i}^{q_{N+i-1}}$ and $\left|\frac{d^m\|A^+_{r_{i}}(x)\|}{dx^m}\right|<\|A^+_{r_{i}}(x)\|^{1+mC\sum^{N+i-2}_{j=N}\eta_j}$ by induction assumption (\ref{norm-i+1}).

\vskip0.2cm
Secondly, $|g_{i+1}(x+j_{m}\a)|\ge 2^{-2(i+1)}q_{N+i}^{-4\tau}>\l_{N+i}^{-\eta_{N+i-1} q_{N+i-1}}$ by induction assumption on the $g_{i+1}$, (\ref{i+1nr-notin-Ii+1}) and Corollary~\ref{c.distance-away-from-zero}.

\vskip0.2cm
Finally, from (\ref{gi+1-closeto-gi}), we have $\|g_{i+1}\|_{C^2}<C.$\\

Thus, the sequence (\ref{i+1nr-sequence}) satisfies all the conditions in Lemma~\ref{l.nstep-deri-estimate}. Hence, we get
\begin{itemize}
\item $\|A_{r^+_{i+1}}(x)\|\ge\l_{N+i+1}^{r^+_{i+1}}$ with $\log\l_{N+i+1}=(1-C\eta_{N+i-1})\log\l_{N+i}$;
\item $\left|\frac{d^m\|A^+_{r_{i+1}}(x)\|}{dx^m}\right|<\|A^+_{r_{i+1}}(x)\|^{1+mC\sum^{N+i-1}_{j=N}\eta_j}$;
\item $\|s_{r^+_{i+1}}-s_{r^+_{i}}\|_{C^2}\le C\l_{N+i}^{-\frac32r_i}$ on $I_{i+1}$.
\end{itemize}
Similarly, we get the estimate for backward sequences. Thus,  we also get that
$$
\|g_{i+2}-g_{i+1}\|_{C^2}\le C\l_{N+i}^{-\frac 32 r_i} \mbox{ on } I_{i+1},
$$
which together with $\|g_{i+1}\|_{C^2}<C$ clearly implies (\ref{gi+2-closeto-gi+1}). This also clearly implies the existence of two minimal points of $|g_{i+2}(x)|$, denoted by $c_{i+2, 1}$ and $c_{i+2, 2}$ such that
$$
|c_{i+2,j}-c_{i+1,j}|<C\l_{N+i}^{-r_i}<C\l_{N+i}^{-\frac 34r_i}.
$$
By (\ref{gi+2-closeto-gi+1}) and the induction assumption for $g_{i+1}:I_{i+1}\rightarrow\mathbb R\mathbb P^1$, $g_{i+2}:I_{i+1}\rightarrow\mathbb R\mathbb P^1$ is clearly in case $(i+2)_\I$. Moreover, if $g_{i+2}$ is of type $\I_+$ on $I_{i+1,1}$, then it is of type $\I_-$ on $I_{i+1,2}$, vice versa.\\

\noindent ${\bf {Case}\ (i+1)_{\I,R}}$. In this case, there exists a $r_{i}\le k\le q_{N+i}-1$ such that
$$
x+k\a\in I_{i+1,2} \mbox{ for some } x\in I_{i+1,1}.
$$
By the \emph{Diophantine} condition, $r_{i+1}\ge q_{N+i}^2$. Again, let $1\le j_m\le r_{i+1}$, $0\le m\le p$ be the times such that
$$
j_{m+1}-j_{m}\ge q_{N+i-1},\ 0\le m\le p-1 \mbox{ and } x+j_m\a\in I_i\setminus I_{i+1},\ 1\le m\le p-1,
$$
where we set $j_0=0$ and $j_p=r_{i+1}$. Let $l\in\{1,\ldots, p-1\}$ be that $j_l<k<j_{l+1}$. Consider for $x\in I_{i+1,1}$,
$$
A_{r^+_{i+1}-k}(x+k\a)\cdot A_k(x)\mbox{ and } A_{-r^-_{i+1}}(x),\mbox{ where } A_k(x)=A_{k-j_l}(x+j_l\a)\cdots A_{j_1}(x)
$$
$$
\mbox{ and } A_{r^+_{i+1}-k}(x+k\a)=A_{j_p-j_{p-1}}(x+j_{p-1})\cdots A_{j_{l+1}-k}(x+k\a).
$$
Clearly, for $A_k(x)$, $A_{r^+_{i+1}-k}(x+k\a)$ and $A_{-r^-_{i+1}}(x)$, the same argument of $(i+1)_{\I,NR}\rightarrow (i+2)_\I$ is applicable. Thus, by Lemma~\ref{l.nstep-deri-estimate}, we get the following. Let $\nu=r^+_{i+1}-k$, $-r^-_{i+1}$ or $k$. Then we have

\vskip0.2cm
First, $\|A_{\nu}(x)\|\ge\l_{N+i+1}^{|\nu|}$ with $\log\l_{N+i+1}=(1-C\eta_{N+i-1})\log\l_{N+i}$.

\vskip0.2cm
Secondly, $\left|\frac{d^m\|A_{\nu}(x)\|}{dx^m}\right|<\|A_{\nu}(x)\|^{1+mC\sum^{N+i}_{j=N}\eta_j}$.

\vskip0.2cm
Finally, $\|s_k(x)-s_{r^+_i}(x)\|_{C^2}$, $\|u_k(x+k\a)-u_{r^-_i}(x+k\a)\|_{C^2}$ and
$$
\|s_{r^+_{i+1}-k}(x+k\a)-s_{r^+_i}(x+k\a)\|_{C^2}<C\l_{N+i}^{-\frac32 r_i}.
$$

By the same reason, for $x\in I_{i+1,2}$, we could get a similar result when we consider
$$
A_{r^+_{i+1}}(x)\mbox{ and } A_{-r^-_{i+1}}(x)=A_{-r^-_{i+1}+k}(x-k\a)\cdot A_{-k}(x).
$$
Then everything follows from the same argument as of the case $(1)_{\I,R}$ in Section 2.1. More concretely, let $g'_{i+1,1}=s_k-u_{r^-_{i+1}}:I_{i+1,1}\rightarrow\mathbb R\mathbb P^1$, $g'_{i+1,2}=s_{r^+_{i+1}-k}-u_k:I_{i+1,2}\rightarrow\mathbb R\mathbb P^1$ and
$$
\bar c_{i+1,1}\in I_{i+1,1},\bar c_{i+1,2}\in I_{i+1,2} \mbox{ where } g'_{i+1,1}(\bar c_{i+1,1})=0, \ g'_{i+1,2}(\bar c_{i+1,2})=0.
$$
Clearly, we have for $j=1,2$,
\beq\label{i+1r-1}
\left\|g'_{i+1,j}-g_{i+1}\right\|_{C^2,I_{i+1,j}}<C\l_{N+i}^{-\frac 32r_i}\mbox{ and } |\bar c_{i+1,j}-c_{i+1,j}|<C\l_{N+i}^{-r_i}.
\eeq
This implies that $g'_{i+1,j}$ are of type $\I$ on $I_{i+1,j}$, one of which is of type $\I_+$ and the other $\I_-$. Let $d=\bar c_{i+1,1}+k\a-\bar c_{i+1,2}$. To estimate the geometric properties of $g_{i+2}$, it is sufficient to take
$$
g_{i+2}(x)=\begin{cases}\tan^{-1}(l_k^2\tan [g'_{i+1,2}(x+k\a)])-\frac\pi2+g'_{i+1,1}(x),\ &x\in I_{i+1,1},\\ \tan^{-1}(l_k^2[\tan g'_{i+1,1}(x-k\a)])-\frac\pi2+g'_{i+1,2}(x),\ &x\in I_{i+1,2}.\end{cases}
$$
Then depending on the size of $d$, we have the following.
\begin{itemize}
\item $(i+1)_{\I,R}\rightarrow (i+2)_{\I}$ if $d_0\le d<\frac{2}{2^{i+1}q_{N+i}^{2\tau}}$ for some $d_0$ close to $\frac{1}{2^{i+1}q_{N+i}^{2\tau}}$.
\item $(i+1)_{\I,R}\rightarrow (i+2)_{\III}$ if $0\le d<d_0$.
\end{itemize}
By (\ref{type3-zero}) of Lemma~\ref{l.s-deri-resonance}, there is a minimal point, $c_{i+2,j}$, of $g_{i+2,j}$ that is always closer to $\bar c_{i+1,j}$ than other ones such that
\beq\label{i+1r-2}
|c_{i+2,j}-\bar c_{i+1,j}|<Cl_k^{-\frac34}<C\l_{N+i}^{-\frac34 r_i}
\eeq
for $j=1,2$. (\ref{i+1r-1}) and (\ref{i+1r-2}) clearly implies that
$$
|c_{i+2,j}-c_{i+1,j}|<C\l_{N+i}^{-\frac34 r_i},\ j=1,2.
$$
Finally, in this case, by (\ref{norm-deri2}), (\ref{norm-deri3}) of Lemma~\ref{l.norm-deri} and (\ref{gi+1-closeto-gi}), (\ref{gi+1-upbound}) of Theorem~\ref{t.iteration}, we get
$$
\|g_{i+2}\|_{C^2}<Cl_k^{-4.5}<C\l^{5q_{N+i}}.
$$
\vskip 0.2cm
\noindent ${\bf Case\ (i+1)_\II}$. In this case, let $d=c_{i+1,1}-c_{i+1,2}$. By definition, $|d|<\frac{2}{2^iq_{N+i-1}^{2\tau}}$. Then we have the following.

\vskip0.2cm
 If $|d|\ge\frac{1}{2^{i+1}q_{N+i}^{2\tau}}$, $I_{i+1,1}\cap I_{i+1,2}=\varnothing$. Then, by induction assumption, it is not difficult to see that $g_{i+1}:I_{i+1}\rightarrow\mathbb R\mathbb P^1$ are essentially as in the case $(i+1)_\I$. Thus we can do it as in the previous step and get $(i+2)_\I$ or $(i+2)_\III$.

\vskip0.2cm
If $d<\frac{1}{2^{i+1}q_{N+i}^{2\tau}}$, then $I_{i+1}$ consists of one interval. Thus $r^\pm_{i+1}>q_{N+i}^2$ is the actually the first return. Moreover there exist two minimal points $c_{i+2, 1}$ and $c_{i+2, 2}$ of $g_{i+2}$ such that
$$
\|g_{i+2}-g_{i+1}\|_{C^2}<C\l_{N+i}^{-\frac32r_i}\mbox{ and } |c_{i+2,j}-c_{i+1,j}|<C\l_{N+i}^{-\frac34 r_i}.
$$
The estimate of $\|A_{\pm r^\pm_{i+1}}(x)\|$ together with its derivatives follows from the same argument as in case $(i+1)_\I$. So we will be in the case $(i+2)_\II$.

\vskip 0.2cm
\noindent ${\bf Case\ (i+1)_\III}$. In this case, other than $c_{i+1,j}\in I_{i,j}$, $j=1,2$, we have minimal points, $c'_{i+1,1}\in I_{i,2}$ and $c'_{i+1,2}\in I_{i,1}$, of $g_{i+1}$ such that if $|g_{i+1}(c_{i+1,j})|<C\l_{N+i}^{-\frac{1}{10}r_i}$, then
$$
|c'_{i+1,2}+k\a-c_{i+1,2}|,\ |c_{i+1,1}+k\a-c'_{i+1,1}|<\l_{N+i}^{-\frac1{30}r_i},\ 0<k\le q_{N+i-1}-1.
$$
Let $d=c_{i+1,1}-c'_{i+1,2}$. Then there exists a $d_0$ close to $\frac{2}{2^{i+1}q_{N+i}^{2\tau}}$ such that the following holds true.

\vskip0.3cm
If $d>d_0$, then it is easy to see that $g_{i+1}:I_{i+1}\rightarrow\mathbb R\mathbb P^1$ are essentially in the case $(i+1)_\I$. So we get the corresponding case in step $(i+2)$. Note in this case it is necessary that $g_{i+1}(c_{i+1,j})=0$.

\vskip0.3cm
If $0<d\le d_0$, then we know that $(I_{i+1,1}+k\a)\cap I_{i+1,2}\neq\varnothing$ for $k<q_{N+i-1}$. Let us first assume $d$ is not too small so that $|g_{i+1}(c_{i+1,j})|<C\l_{N+i}^{-\frac{1}{10}r_i}$. The other case will be dealt with later. In any case, by the \emph{Diophantine} condition, we have that $r^\pm_{i+1}>q_{N+i}^2$. Hence, for any $x\in I_{i+1}$, whenever $x+l\a\in I_{i}$ we must have $x+l\a\notin I_{i+1}$ for any $1\le |l|\le r_{i+1}-k$. Thus, by Lemma~\ref{l.nstep-deri-estimate}, it is not difficult to see that, as functions on $I_{i+1}$, $\|g_{i+2}-g_{i+1}\|_{C^2}<\l_{N+i}^{-\frac32r_i}$. This implies the existence of two minimal points $c_{i+2, 1}$ and $c_{i+2, 2}$ of $g_{i+2}$ such that
$$
|c_{i+1,j}-c_{i+2,j}|<C\l_{N+i}^{-\frac34r_i};
$$
and together with (\ref{gi+1-upbound}), we have
$$
\|g_{i+2}\|_{C^2}<C\l^{5q_{N+i-1}}<C\l^{5q_{N+i}}.
$$
The estimate of $\|A_{\pm r^\pm_{i+1}}(x)\|$ together with its derivatives comes from the same argument as previous cases. Thus, we are in case $(i+2)_\III$.

\vskip0.3cm
In addition, from $(i+1)_\III$ to $(i+2)_\III$ that, we need to show that if $|g_{i+1}(c_{i+1,j})|>C\l_{N+i}^{-\frac1{10}r_{i}}$, $j=1,2$, then so are $|g_{i+2}(c_{i+2,j})|$, $j=1,2$. This is a consequence of Lemma~\ref{l.nstep-deri-estimate}. Indeed, fix arbitrary $x\in I_{i+1}$, let $0\le j_m\le r^+_{i+1}$, $0\le m\le p$ be as in case $(i+1)_{\I,NR}$. Consider the following sequence
$$
A_{j_1}(x),\ldots, A_{j_{m+1}-j_m}(x+j_m\a),\ldots, A_{j_{p}-j_{p-1}}(x+j_{p-1}\a)
$$
and we clearly have
\begin{itemize}
\item $\|A_{j_{m+1}-j_m}(x+j_m\a)\|\ge \l_{N+i}^{r_i}$ and
\item $|g_{i+1}(x+j_m\a)|=C|s_{j_{m+1}-j_m}(x+j_m\a)-u_{j_m-j_{m-1}}(x+j_m\a)|>C\l_{N+i}^{-\frac1{10}r_{i}}.$
\end{itemize}
Then, by Remark~\ref{C0-or-C1} following the proof of Lemma~\ref{l.nstep-deri-estimate} in Section~\ref{b} in case of $C^0$ estimate, we get as functions on $I_{i+1}$,
$$
\|s_{r^+_{i+1}}-s_{r^+_i}\|<C\l_{N+i}^{-\frac32r_i}.
$$
Similarly, we get $\|u_{r^-_{i+1}}-u_{r^-_i}\|<C\l_{N+i}^{-\frac32r_i}$. Thus, we get $\|g_{i+2}-g_{i+1}\|_{I_{i+1}}<C\l_{N+i}^{-\frac32r_i}$. This clearly implies that
$$
\|g_{i+2}(x)\|>C\l_{N+i}^{-\frac1{10}r_i} \mbox{ for all } x\in I_{i+1}.
$$

Finally, whenever we are in case $(i+2)_\III$, let $c'_{i+2, 1}$ and $c'_{i+2, 2}$ be the two extra minimal points of $g_{i+2}$. We need to show that if $|g_{i+2}(c_{i+2,j})|<C\l_{N+i+1}^{-\frac1{10}r_{i+1}}$ for $j=1$ or $j=2$, then
      $$
      |c_{i+2,1}+k\a-c'_{i+2,1}|,\ |c_{i+2,2}-k\a-c'_{i+2,2}|<C\l_{N+i+1}^{-\frac1{30} r_{i+1}}
      $$
for some $1\le k< q_{N+i}$. As the proof of Lemma~\ref{critical-points-orbit}, this is just another an application of Lemma~\ref{l.almost-inv-s-direction} and Corollary~\ref{c.almost-inv-u-direction} to the following
$$
A_{r^+_{i+1}}(x+k\a)\cdot A_{k}(x)\cdot A_{r^-_{i+1}}(x-r^-_{i+1}\a).
$$
Then everything follows the same proof of Lemma~\ref{critical-points-orbit}. This concludes the proof of our induction.
\begin{remark}\label{r.uh-stopping}
This above inequality essentially shows that $|g_{n+1}(x)|>C\l_{N+i}^{-\frac1{10}r_i}$ for all $n\ge i$ and for all $x\in I_n$. Since it is clear that the $\|g_{n+1}-g_{n}\|_{I_n}<C\l_{N+n-1}^{-\frac32r_{n-1}}$, $n\ge i+1$,  we  have
$$
C\sum^{\infty}_{n=i+1}\l_{N+n-1}^{-\frac32r_{n-1}}\le C\l_{N+i}^{-\frac32r_{i}}.
$$
Thus, we will not need to worry about these parameters in the future since Lemma~\ref{l.nstep-deri-estimate} will be applicable for all $n\ge i$. In fact, the reason we have this property is that, by Lemma 11 of \cite{zhang}, it is not difficult to see that $(\a,A)\in\mathcal U\mathcal H$ for these parameters. Hence there must exist a $N_0\in\Z^+$ and $\gamma_0>0$ such that $|s_n(x)-u_n(x)|>\gamma_0$ for all $n\ge N_0$ and for all $x\in\R/\Z$. In particular, whenever we have $|g_{i+1}(x)|>C\l_{N+i}^{-\frac1{10}r_i}$ for all $x\in I_i$ for some $i$, then the induction can basically stop at this step $i$. Thus, in our induction, we essentially only need to deal with the case $|g_{i+1}(x)|<C\l_{N+i}^{-\frac1{10}r_i}$ for some $x\in I_i$.
\end{remark}

\begin{remark}\label{r.no-new-resonance}
As we see in the proof of induction, the case $(i+2)_\III$ may come from $(i+1)_\III$. In fact, it is possible that the strong resonance occurs at step $i_0\ll i$ so that $(i+1)_\III$ comes from $(i_0)_\III$. Then, as pointed out in Remark~\ref{r.type-of-functions}, $g_{i+1}:I_{i,j}\rightarrow\R\PP^1$, $j=1,2$, may become type $\II$ and we may have $c_{i+1,j_1}=c'_{i+1,j_2}$, $j_1\neq j_2\in\{1,2\}$ and $\l_N^{-k}>\frac{1}{2^{i+1}q_{N+i}^{2\tau}}$. Thus, at step $(i+2)$, one may worry that if there is resonance between two type $\II$ functions, which may lead to some new bifurcation that is not included in our induction. However, the last part of Theorem~\ref{t.iteration} and Remark~\ref{r.uh-stopping} imply that either we have $|g_{n}(c_{n,j})|>C\l_{N+i}^{-\frac1{10}r_{i}}$ for all $n\ge i+1$, hence, we do not need to worry about this case in the future. Or we have
$$
|c'_{i+1,1}-k\a+c_{i+1,1}|,\ |c'_{i+1,2}+k\a-c_{i+1,2}|<C\l_{N+i}^{-\frac1{30}r_{i}}\ll |I_{i+1,j}|,
$$
which implies that there will be no new type of bifurcation within time $q_{N+i}$.
\end{remark}

\section{\bf{Uniform Positivity: Proof of Theorem~\ref{t.main}}}\label{sec-positivity}
We are going to do it by induction. Let $D_0=\R/\Z$ and $k'_0=0$. For $i\ge1$, let
$$
k'_i=\begin{cases}0, &\mbox{ for cases } (i+1)_\I,\ (i+1)_\II;\\ k_i, &\mbox{ for the case } (i+1)_\III,\end{cases}
$$
where $0<k_i<q_{N+i-1}$ is the time such that $(I_{i,1}+k_i\a)\cap I_{i,2}\neq\varnothing$. Let
$$
B^+_i=\left(\bigcup^{q_{N+i}-1}_{l=1} R_\a^{-l}I_{i+1,1}\right)\cup \left(\bigcup^{q_{N+i}+k'_i-1}_{l=1}R_\a^{-l}I_{i+1,2}\right),
$$
$$
B^-_i=\left(\bigcup^{q_{N+i}-1}_{l=1} R_\a^{l}I_{i+1,2}\right)\cup \left(\bigcup^{q_{N+i}+k'_i-1}_{l=1}R_\a^{l}I_{i+1,1}\right)\mbox{ and }
$$
$$
D_{i+1}=D_i\setminus(B_i^+\cup B_i^-).
$$
It is easy to see that $D_\infty=\bigcap_{i\ge1}D_i$ is of positive Lebesgue measure. Indeed, since $\{q_n\}_{n\ge 0}$ grows at least exponentially fast, by choosing $N$ large, we get
$$
\mathrm{Leb}(D_\infty)>1-\sum_{n\ge N}\frac{6q_n}{q^{2\tau}_n}>1-\sum_{n\ge N}\frac{1}{q^{2}_n}>0.
$$
Set $q_{N-1}=1$, $t^\pm_0=1$. Inductively, we will show that for all $i\ge1$ and $x\in D_i$, let $t^\pm_i=t^\pm_i(x)\ge q_{N+i-1}$ be the first nonzero time such that $x\pm t^\pm_i\a\in I_i$, then
\beq\label{LE-stepi}
\frac1{t^\pm_i}\log \|A_{\pm t^\pm_i}(x)\|\ge\log\l_{N+i};
\eeq
\beq\label{1st-chain1}
x\in I_i \mbox{ or } \left|s_{t^+_{i-1}}(x)-u_{t^-_{i-1}}(x)\right|>c(2^{-i}q_{N+i-1}^{-2\tau})^3;
\eeq
\beq\label{1st-chain2}
\left|s_{t^+_i}(x)-s_{t^+_{i-1}}(x)\right|,\ \left|u_{t^-_i}(x)-u_{t^-_{i-1}}(x)\right|<C\l_{N+i-1}^{-q_{N+i-2}}.
\eeq
Let $\l_\infty=\lim_{i\rightarrow\infty}\l_{N+i}$. It is easy to see that
$$
\log\l_\infty>(1-\e)\log\l_N=(1-\e)\log\l
$$
for $N$ sufficiently large. Theorem~\ref{t.main} is immediate since we then have
$$
\lim_{n\rightarrow\infty}\frac1n\log\|A_n(x)\|=\limsup_{n\rightarrow\infty}\frac1n\log\|A_n(x)\|\ge\log\l_\infty \mbox{ for all } x\in D_\infty.
$$
Now to show (\ref{LE-stepi})--(\ref{1st-chain2}), let us start with $i=1$. It follows immediately from (\ref{s1-c2closeto-sn}) and (\ref{norm1-growth-deri}). Because in this case, we have $x+l\a\notin I_1$ for all $l\in [-t^-_1-1,-1]\cup [1,t^+_1-1]$.

Assume (\ref{LE-stepi})--(\ref{1st-chain2}) holds for $i=n$ and we want to show the case for $n+1$. Fix a
$$
x\in D_{n+1}=D_n\setminus(B_n^+\cup B_n^-).
$$
Let us first consider (\ref{LE-stepi}). We only need to consider the forward sequence since the argument for backward sequence is similar. Now if $t^+_{n+1}(x)=t^+_{n}(x)$, then it follows from the induction assumption. Otherwise, let $j_0=0$ and $0<j_m<t^+_{n+1}$, $1\le m\le p$ be all the times such that
\begin{center}
$x+j_m\a\in I_n$, $1\le m\le p$; $j_{m+1}-j_{m}=r^+_n(x+j_{m}\a)\ge q_{N+n-1}$, $1\le m\le p-1$.
\end{center}
Note $j_1=t^+_n$. Then by definition, we have the following facts.

\vskip 0.3cm
\noindent  If $k_n'=0$, then we claim that
      \beq\label{apply-to-c1-1}
      x+j_m\a\notin I_{n+1},\ 1\le m\le p.
      \eeq
In fact, if $j_m<q_{N+n}$, then it follows from the definition of $D_{n+1}$. If $q_{N+n}\le j_m<t^+_{n+1}$, then it follows from the definition of $t_{n+1}$.
\vskip0.3cm
\noindent If $k'_n=k_n$, then for each $1\le m\le p-1$, we claim that
      \beq\label{apply-to-c1-2}
        x+j_m\a\in I_{n,1}\setminus \left[I_{n+1,1}\cup (I_{n+1,2}-k_n\a)\right] \mbox{ or } x+j_m\a\in I_{n,2}\setminus I_{n+1,2}.
      \eeq
Indeed, for the `either' part, if $j_m<q_{N+n}$, then it follows from the definition of $D_{n+1}$. If $j_m\ge q_{N+n}$, then $q_{N+n}<j_m+k_n<j_{m+1}<t_{n+1}$. By definition of $t_{n+1}$, we must have that $x+(j_m+k_n)\a\notin I_{n+1,2}$. For the `or' part of the claim, if $j_m<q_{N+n}$, then again it follows from the definition of $D_{n+1}$. If $j_m\ge q_{N+n}$, then it  follows from the fact that $j_m<t_{n+1}$ and the definition of $t_{n+1}$. For the time $j_p$, then either it falls into the two cases above; or $j_p+k_n=t_{n+1}$. Note in the latter case, we have $j_p-j_{p-1}>q_{N+n-1}^2\gg q_{N+n-1}>k_n.$ Then it is easy to see that
$$
\|A_{t_{n+1}-j_{p-1}}(x+j_{p-1}\a)\|\ge \l_{N+n}^{t_{n+1}-j_{p-1}} \mbox{ and }
$$
$$
|s_{t_{n+1}-j_{p-1}}(x+j_{p-1})-s_{j_p-j_{p-1}}(x+j_{p-1})|<C\l_{N+n}^{-r^+_n}.
$$
Note in this case, either $c'_{n+1,2}=c_{n+1,1}$, then we do not really need to consider $I_{n+1,2}-k_n\a$ in the subsequent estimate; or by (\ref{i+1-orbit-critical-points}), $I_{n+1,2}-k_n\a$ is almost centered around $c'_{n+1,2}$. Also, we have either $k_n\ge r_{n-1}\ge q_{N+n-2}$ and $l_{k_n}\gg |I_{n+1,j}|$; or $k_n$ comes from resonance of previous step $s<n$. Then we have $|c'_{s,2}-c_{s,1}|<\frac{1}{2^nq_{N+n-1}}<\frac{1}{4}|I_{s,j}|$. In any case, Corollary~\ref{c.distance-away-from-zero} will be applicable in the subsequent estimate.

\vskip0.3cm
Next we claim that we have
\beq\label{1st-chain4}
|u_{t^+_n}(x+t^+_n\a)-u_{r^-_n}(x+t^+_n\a)|\le c\l_{N+n}^{-q_{N+n-1}}.
\eeq
Indeed, if $x\in I_n$, then $j_1=r^-_n$ and we have nothing to say. Otherwise, by (\ref{1st-chain1}) and (\ref{1st-chain2}) in case of $i=n$ and the fact $\l_{N+i-1}^{-q_{N+i-2}}\ll c(2^{-i}q_{N+i-1}^{-2\tau})^3$, we clearly have
\beq\label{1st-chain3}
\left|s_{t^+_{n}}(x)-u_{t^-_{n}}(x)\right|>c(2^{-i}q_{N+n-1}^{-2\tau})^3.
\eeq
Now we consider the product
\beq\label{1st-chain8}
A_{-r^-_n}(x+t^+_n\a)=A_{-t^-_n}(x)\cdot A_{-t^+_n}(x+t^+_n\a)
\eeq
Note that we have
$$
s_{t^+_{n}}(x)=u[A_{-t^+_n}(x+t^+_n\a)],\ u_{t^-_{n}}(x)=s[A_{-t^-_n}(x)],\mbox{ and }
$$
$$
u_{t^+_n}(x+t^+_n\a)=s[A_{-t^+_n}(x+t^+_n\a)],\ u_{r^-_n}(x+t^+_n\a)=s[A_{-r^-_n}(x+t^+_n\a)].
$$
By (\ref{LE-stepi}) in the case of $i=n$, we have
\beq\label{1st-chain7}
\|A_{-t^-_n}(x)\|,\ \|A_{-t^+_n}(x+t^+_n\a)\|=\|A_{t^+_n}(x)\|>\l_{N+n}^{q_{N+n-1}}\gg C(2^{i}q_{N+n-1}^{2\tau})^3.
\eeq

Thus, by (\ref{1st-chain3}) and (\ref{1st-chain7}), we can apply Lemma~\ref{l.1step-deri-estimate} to (\ref{1st-chain8}). Then by the first estimate of (\ref{1step-deri-estimate4}), we obtain (\ref{1st-chain4}).

Now we are ready to show (\ref{LE-stepi}) for $i=n+1$. We need to concatenate either the sequence
\beq\label{1st-chain5}
A_{j_1}(x),\ldots, A_{j_{m+1}-j_m}(x+j_m\a),\ldots, A_{t_{n+1}-j_{p-1}}(x+j_{p-1}\a)
\eeq
if $j_p+k_n=t^+_{n+1}$; or the sequence
\beq\label{1st-chain6}
A_{j_1}(x),\ldots, A_{j_{m+1}-j_m}(x+j_m\a),\ldots, A_{t_{n+1}-j_{p}}(x+j_{p}\a),
\eeq
otherwise. In any case, we get the following facts.

\vskip0.2cm
$\|A_{j_{m+1}-j_m}(x+j_m\a)\|\ge \l_{N+n}^{r_n}\ge \l_{N+n}^{q_{N+n-1}}$. For $m=1$, this follows from the induction assumption (\ref{LE-stepi}) in case of $i=n$. For $1<m\le p$, it is by (\ref{norm-i+2}) of Theorem~\ref{t.iteration}.

\vskip0.2cm
$|s_{r^+_n}(x+j_m\a)-u_{r^-_n}(x+j_m\a)|=c|g_{n+1}(x+j_m\a)|>c|I_{n+1}|^3=c(2^{n+1}q_{N+n}^{2\tau})^{-3}$. For $m=1$, this follows from (\ref{1st-chain4}), (\ref{apply-to-c1-1}), (\ref{apply-to-c1-2}), Theorem~\ref{t.iteration} and Corollary~\ref{c.distance-away-from-zero}. For $2\le m\le p$, it follows directly from the (\ref{apply-to-c1-1}), (\ref{apply-to-c1-2}), Theorem~\ref{t.iteration} and Corollary~\ref{c.distance-away-from-zero}. If we set $c(2^{n+1}q_{N+n}^{2\tau})^{-3}=\l_{N+n}^{-\eta q_{N+n-1}}$, then it is clear that $\eta<\eta_{N+n-1}=C\frac{\log q_{N+n}}{q_{N+n-1}}$.

\vskip0.2cm
Thus, by Remark~\ref{r.returntime-not-large}, Lemma~\ref{l.nstep-deri-estimate} can be applied to the sequence (\ref{1st-chain5}) or (\ref{1st-chain6}). Then, by (\ref{nstep-deri-estimate3}) of Lemma~\ref{l.nstep-deri-estimate}, we get
\begin{align*}
\|A_{t_{n+1}}(x)\|> \CA_{1} \mbox{ or } \CA_{2}
\ge \l_{N+n}^{t_{n+1}(1-\eta_{N+n-1})}=\l_{N+n+1}^{t_{n+1}},
\end{align*}
where $\CA_{1}=\left(\|A_{t_{n+1}-j_{p}}(x+j_{p}\a)\|\cdot\prod^p_{m=1}\|A_{j_m-j_{m-1}}(x+j_{m-1}\a)\|\right)^{1-\eta_{N+n-1}}$
and $\CA_{2}=\left(\|A_{t_{n+1}-j_{p-1}}(x+j_{p-1}\a)\|\cdot\prod^{p-1}_{m=1}\|A_{j_m-j_{m-1}}(x+j_{m-1}\a)\|\right)^{1-\eta_{N+n-1}}.$ This is (\ref{LE-stepi}) for $i=n+1$.

Now we need to show (\ref{1st-chain1}) and (\ref{1st-chain2}) for $i=n+1$. For (\ref{1st-chain1}), if $x\in I_n$, then it follows from (\ref{1st-chain3}) and the fact $q_{N+n}>q_{N+n-1}$. If $x\in I_n\setminus I_{n+1}$, then it follows from Theorem~\ref{t.iteration} and Corollary~\ref{c.distance-away-from-zero} since $t^\pm_n=r^\pm_n$ in this case. If $x\in I_{n+1}$, we have nothing need to say.

For (\ref{1st-chain2}), again it suffices to consider the forward sequence which corresponds to the estimate $|s_{t^+_{n+1}}(x)-s_{t^+_{n}}(x)|<C\l_{N+n}^{-q_{N+n-1}}$. As the proof of (\ref{LE-stepi}) for $i=n+1$,  this is actually just another consequence (\ref{nstep-deri-estimate1}) of Lemma~\ref{l.nstep-deri-estimate} when we apply the induction fact $\|A_{t^+_n}(x)\|>\l_{N+n}^{q_{N+n-1}}$ to the sequence (\ref{1st-chain5}) or (\ref{1st-chain6}). This concludes the proof of Main Theorem.

\section{{\bf Large Deviation and Continuity: Proof of Theorem~\ref{t.continuity}}}\label{sec-continuity}
In this section we will first prove a version of large deviation theorem (LDT) for Lyapunov exponents. Then, combine with the so-called Avalanche Principle, our LDT will imply the weak H\"older continuity of Lyapunov exponents and integrated density of states (IDS) with respect to the energy. Let us start with the large deviation theorem.
\subsection{Large Deviation Theorem}
Again we only need to consider energies in a compact interval, e.g. $E\in [\l\inf v-C, \l\sup v+C]$. Thus for any $\e>0$, we have for large $\l$
\beq\label{uniform-bound}
\log \|A^{(E-\l v)}(x)\|<(1+\frac\e2)\l
\eeq
for all $E$ in question and all $x\in\R/\Z$.

Let $\frac{p_s}{q_s}$ be the $s$th continued fraction approximants of $\a$, which is from our Main Theorem. Note we have by \emph{Diophantine} condition
\beq\label{diophantine}
q_{s+1}<cq_{s}^{\tau-1},\ s\in \Z_+.
\eeq

We need the following proposition which is a standard result for \emph{Diophantine} translation on torus. For a simple proof, see \cite[Lemma 6]{aviladamanikzhang}.
\begin{prop}
For arbitrary fixed \emph{Diophantine} frequency, there is a polynomial $P=P(\a)\in \R[X]$
such that, for any interval $I\subset\R/\Z$ and for each $x\in\R/\Z$,
$$
x+\ell\a\in I \mbox{ for some } 0< \ell<P(|I|^{-1}).
$$
\end{prop}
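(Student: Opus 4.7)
The plan is to combine the classical three-distance theorem of Steinhaus with the continued-fraction reformulation of the Diophantine condition already stated at the opening of Section~\ref{preliminaries}.

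First I would recall the three-distance theorem: for any positive integer $N$ and any irrational $\alpha$, the $N$ points $\{j\alpha \bmod 1\}_{j=0}^{N-1}$ partition $\R/\Z$ into arcs taking at most three distinct lengths. A standard refinement using continued fractions strengthens this to the statement that whenever $N \geq q_s$, every arc in the resulting partition has length at most $\|q_{s-1}\alpha\|_{\R/\Z} \leq 1/q_s$. In particular, the set $\{j\alpha \bmod 1\}_{j=0}^{q_s-1}$ is $(1/q_s)$-dense in $\R/\Z$, and so is its translate $\{j\alpha \bmod 1\}_{j=1}^{q_s}$.

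Next, given an interval $I \subset \R/\Z$ with $|I| = \delta$, I would pick the smallest index $s$ with $q_s > 1/\delta$. Then every arc produced by the points $\{j\alpha\}_{j=1}^{q_s}$ has length strictly less than $\delta$, so for any $x \in \R/\Z$ the translated interval $I - x$ must contain at least one such point. This produces an integer $\ell$ with $1 \leq \ell \leq q_s$ and $x + \ell\alpha \in I$. Choosing the shifted range $\{1,\ldots,q_s\}$ rather than $\{0,\ldots,q_s - 1\}$ is what ensures $\ell > 0$, which is the only subtle point in the argument.

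Finally, I would invoke the Diophantine assumption in its continued-fraction form $q_{s+1} < c q_s^{\tau-1}$. The minimality of $s$ forces $q_{s-1} \leq 1/\delta$, hence
$$
\ell \;\leq\; q_s \;<\; c\, q_{s-1}^{\tau-1} \;\leq\; C\, \delta^{-(\tau-1)} \;=\; C\, |I|^{-(\tau-1)}.
$$
This yields the required polynomial $P(X) := C X^{\tau-1} \in \R[X]$, with coefficients depending only on $\alpha$ through $\tau$ and $\gamma$. There is no real obstacle here: the argument is a textbook application of the three-distance theorem once the Diophantine condition has been translated into its continued-fraction form, and the exponent $\tau - 1$ matches what one expects from the pigeonhole-style heuristic $\ell \cdot |I| \gtrsim \|\ell\alpha\|_{\R/\Z} \gtrsim \ell^{-(\tau-1)}$.
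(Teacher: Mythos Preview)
Your approach is correct, and in fact the paper does not give its own proof of this proposition: it simply cites \cite[Lemma~6]{aviladamanikzhang}. The three-distance argument you outline is exactly the natural direct proof, and the resulting bound $P(X)=CX^{\tau-1}$ is what the paper implicitly uses later when it writes $R_n=P(|I_n|^{-1})=q_{N+n-1}^C$.

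Two small points are worth tightening. First, the gap bound you quote is slightly off: for $N=q_s$ the largest gap among $\{j\alpha\}_{j=0}^{q_s-1}$ is not $\|q_{s-1}\alpha\|$ but $\|q_{s-1}\alpha\|+\|q_s\alpha\|<2/q_s$ (already visible for the golden mean with $q_s=2$, where the two gaps are $\alpha$ and $1-\alpha$). This only forces you to choose $s$ with $q_s>2/\delta$ instead of $q_s>1/\delta$, and costs a harmless factor of $2^{\tau-1}$ in the constant. Second, since $\tau$ need not be an integer, $CX^{\tau-1}$ is not literally an element of $\R[X]$; replace the exponent by $\lceil\tau\rceil$ so that $P$ is a genuine polynomial. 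Neither point affects the substance of your argument.
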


Consider $I_n=I_{n,1}\cup I_{n,2}$ with $I_{n,j}=B(c_{n,j}, 2^{-n}q_{N+n-1}^{-2\tau})$, $n\ge 1$, $j=1,2$, which is from the induction Theorem~\ref{t.iteration}. Recall by Theorem~\ref{t.iteration}, there are associated times $r^\pm_n(x)$, $r_n=\min_{x\in I_n}{r^\pm_n(x)}\ge q_{N+n-1}$, such that the following holds.
\begin{itemize}
\item $\|A_\nu(x)\|\ge \l_{N+n}^{|\nu|}$ for each $x\in I_n$, $\nu=r^\pm_n(x)\mbox{ or } r_n$.
\item $|s_{r^+_n(x)}(x)-s_{r_n}(x)|$, $|u_{r^-_n(x)}(x)-u_{r_n}(x)|<\l_{N+n}^{-\frac32 r_n}$ for each $x\in I_n$ , see, e.g. Remark~\ref{r.small-error}.
\item $g_{n+1}:I_n\rightarrow\R\PP^1$ is of type $\I$, $\II$ or $\III$, where $g_{n+1}(x)=s_{r_n}(x)-u_{r_n}(x)$.
\end{itemize}

Recall $C_{n+1}=\{c_{n+1,1}, c_{n+1,2}\}$ is a set of minimal points of $g_{n+1}$, which is also from Theorem~\ref{t.iteration}.
Without loss of generality, let $R_n=P(|I_n|^{-1})= q_{N+n-1}^C$ for some constant $C$ depending only on $\a$. By definition, $R_n\ge \max_{x\in I_n}\{r^\pm_n(x)\}\ge r_n$.  Note that by (\ref{diophantine})
\beq\label{large-forward-time}
R_n^{2\tau}=q_{N+n-1}^{2\tau C}>q_{N+n}^{2C}=R_{n+1}^2>R_n^{2}.
\eeq
Let $\CI_n=B(C_{n+1}, e^{-\delta q_{N+n-1}})=B(c_{n+1,1}, e^{-\delta q_{N+n-1}})\bigcup B(c_{n+1,2}, e^{-\delta q_{N+n-1}})$,
\beq\label{poly-equidistributed}
\CD_n=\bigcup_{\ell=1}^{R_n^{2\tau}}(\CI_n-\ell\a) \mbox{ and } \CB_n=\bigcup_{\ell=1}^{R_n}(\CD_n-\ell\a).
\eeq
Clearly, if we set $\sigma=\frac{1}{2\tau C}$, then for sufficiently large $n$ and for each $i\in [R_n^2, R_n^{2\tau}]$
\beq\label{bad-set-estimate}
\mathrm{Leb}(\CB_n)\le 4q_{N+n-1}^{(2\tau+1) C}e^{-\delta q_{N+n-1}}\le e^{-\frac12\delta q_{N+n-1}}\le e^{-\frac12\delta i^{\sigma}}.
\eeq

Now we claim the following. Let be $v,\a$ as in Theorem~\ref{t.main}. Then for all $\e>0$, there exists $\l_2=\l_2(v,\a,\e)$, $\delta=\delta(\e)>0$, $0<\sigma=\sigma(\a)<1$, and $n_0=n_0(\a,\e)\in\Z^+$, such that for each $\l>\l_2$, each $i\in [R_n^2, R_n^{2\tau}]$ with $n\ge n_0$, and each $x\in (\R/\Z)\setminus \CB_n$, it holds that
\beq\label{large-LE-mostx}
\|A_i(x)\|\ge \l_{N+n+1}^i\ge \l^{(1-\frac\e2)i}.
\eeq
The following LDT is an easy consequence of (\ref{large-LE-mostx}) and Theorem~\ref{t.main}.

\begin{theorem}\label{t.large-deviation}
Let $v$, $\a$ be as in Main Theorem. Then for each $\e>0$, there exists $\l_1=\l_1(v,\a,\e)$, $\delta=\delta(\e)>0$, $0<\sigma=\sigma(\a)<1$, and $i_0=i_0(\a,\e)\in\Z^+$, such that for each $\l>\l_1$ and each $i\ge i_0$, it holds that
\beq\label{large-deviation}
\mathrm{Leb}\{x\in \R/\Z| \left|\frac{1}{i}\log\|A_i(x)\|-L(E)\right|>\e\log\l\}<e^{-\frac12\delta i^\sigma}.
\eeq
\end{theorem}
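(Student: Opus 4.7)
\medskip

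\noindent\textbf{Proof proposal.} The plan is to first establish the claim (\ref{large-LE-mostx}), and then deduce the LDT from it together with Theorem~\ref{t.main} and the easy upper bound. Granting (\ref{large-LE-mostx}) and (\ref{bad-set-estimate}), the deduction is short: given $i\ge i_0$, by (\ref{large-forward-time}) there is a unique $n$ with $i\in[R_n^2, R_n^{2\tau}]$, and $n\to\infty$ as $i\to\infty$. For $x\notin\CB_n$, (\ref{large-LE-mostx}) gives $\frac{1}{i}\log\|A_i(x)\|\ge(1-\e/2)\log\l$, while the subadditive inequality combined with (\ref{uniform-bound}) gives the pointwise upper bound $\frac{1}{i}\log\|A_i(x)\|\le (1+\e/2)\log\l$. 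Since Theorem~\ref{t.main} places $L(E)$ in the interval $[(1-\e/2)\log\l, (1+\e/2)\log\l]$ (after shrinking $\e$), both quantities lie in an interval of length $\e\log\l$, so (\ref{large-deviation}) follows with $\mathrm{Leb}(\CB_n)\le e^{-\frac12\delta i^{\sigma}}$.

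For the main step, namely (\ref{large-LE-mostx}), I would mirror closely the inductive decomposition carried out in Section~\ref{sec-positivity}, but truncated at the single scale $n+1$ rather than iterated to infinity. Unpacking the definition (\ref{poly-equidistributed}), the hypothesis $x\notin\CB_n$ is equivalent to $x+m\a\notin\CI_n$ for all $m$ in a window of length $R_n+R_n^{2\tau}\gg i$. By the equidistribution proposition applied with interval $I_n$, I would first locate a time $0\le j_1\le R_n$ with $x+j_1\a\in I_n$; thereafter the return-time function $r_n^{\pm}$ supplied by Theorem~\ref{t.iteration} produces an increasing sequence $j_1<j_2<\cdots<j_p\le i<j_{p+1}$ of visits to $I_n$ with consecutive gaps $\ge q_{N+n-1}$. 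Using $R_n\le i^{\sigma'}$ for an appropriate $\sigma'<1$, the two tails $[0,j_1]$ and $[j_p,i]$ contribute negligibly and (\ref{norm-i+2}) gives $\|A_{j_{k+1}-j_k}(x+j_k\a)\|\ge \l_{N+n}^{j_{k+1}-j_k}$ on each block.

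The core of the argument is then the concatenation via Lemma~\ref{l.nstep-deri-estimate}. The non-resonance hypothesis there requires a lower bound on $|s^{(k)}-u^{(k-1)}|=|g_{n+1}(x+j_k\a)|$ at each junction. This is exactly where the avoidance condition $x\notin\CB_n$ pays off: it guarantees $x+j_k\a\notin \CI_n=B(C_{n+1}, e^{-\delta q_{N+n-1}})$, so Corollary~\ref{c.distance-away-from-zero} applied to $g_{n+1}$ (of type $\I$, $\II$, or $\III$) yields $|g_{n+1}(x+j_k\a)|\ge c e^{-3\delta q_{N+n-1}}$; choosing $\delta$ small relative to $\log\l_{N+n}/q_{N+n-1}$ makes this quantity $\ge \l_{N+n}^{-\eta_{N+n-1}}$ as needed. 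Lemma~\ref{l.nstep-deri-estimate} then delivers $\|A_i(x)\|\ge \bigl(\prod_k \|A_{j_{k+1}-j_k}(x+j_k\a)\|\bigr)^{1-\eta_{N+n-1}}\ge \l_{N+n+1}^{i}$, proving (\ref{large-LE-mostx}).

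The main obstacle I anticipate is bookkeeping at the boundary of the orbit window, namely verifying that the two short tails do not degrade the exponential lower bound and that the minimum-gap condition $\ell\le n<C\l_{N+n}^{1/2}$ of Lemma~\ref{l.nstep-deri-estimate} is satisfied for all $i\le R_n^{2\tau}$; this forces the constant $C$ in $R_n=q_{N+n-1}^C$, and hence the exponent $\sigma=1/(2\tau C)$, to depend only on $\a$. A secondary subtlety is handling the case where $g_{n+1}$ is of resonant type $\III$: there one must use that the bad region $B(C_{n+1}, e^{-\delta q_{N+n-1}})$ already covers the tangential cluster described in Lemma~\ref{l.s-deri-resonance}, so that outside $\CI_n$ Corollary~\ref{c.distance-away-from-zero} still applies with polynomial bound. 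Once these points are handled, the remaining measure estimate (\ref{bad-set-estimate}) is immediate from $|\CI_n|=2e^{-\delta q_{N+n-1}}$ and $|\CB_n|\le (R_n+R_n^{2\tau})\cdot |\CI_n|$, completing the proof.
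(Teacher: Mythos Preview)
Your outline is correct and follows the paper's own argument almost step for step: reduce to the pointwise lower bound (\ref{large-LE-mostx}), locate an entry time $\ell<R_n$ into $I_n$, chop $[\,\ell,i\,]$ at the successive return times $j_m$, invoke (\ref{norm-i+1}) on each block, and concatenate with Lemma~\ref{l.nstep-deri-estimate} after checking the angle condition $|g_{n+1}(x+j_m\alpha)|\gg \lambda_{N+n}^{-\eta}$.

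One point needs sharpening. In the type~$\III$ case your proposed fix (``the bad region $B(C_{n+1},e^{-\delta q_{N+n-1}})$ already covers the tangential cluster'') is not sufficient as stated. The set $C_{n+1}=\{c_{n+1,1},c_{n+1,2}\}$ consists only of the \emph{primary} minima; the extra minima $c'_{n+1,1}\in I_{n,2}$ and $c'_{n+1,2}\in I_{n,1}$ need not lie within $e^{-\delta q_{N+n-1}}$ of $C_{n+1}$, so Corollary~\ref{c.distance-away-from-zero} alone does not exclude $|g_{n+1}(x+j_m\alpha)|$ being small near $c'_{n+1,2}$. The paper closes this gap using the orbit relation (\ref{i+1-orbit-critical-points}): either $|c'_{n+1,2}-c_{n+1,1}|\ll e^{-\delta q_{N+n-1}}$, in which case $c'_{n+1,2}$ \emph{is} covered, or else $|c'_{n+1,2}+k\alpha-c_{n+1,2}|<\lambda_{N+n}^{-\frac1{30}r_n}$ for some $k<q_{N+n-1}$, so that $x+j_m\alpha$ close to $c'_{n+1,2}$ would force $x+(j_m+k)\alpha\in\CI_n$ with $j_m+k<i-\ell$, contradicting $x\notin\CB_n$. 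This is exactly why $\CD_n$ in (\ref{poly-equidistributed}) is taken as a union of $R_n^{2\tau}$ translates rather than just $R_n$ translates. Once you incorporate this, the rest of your argument goes through unchanged.
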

\begin{proof}
Choose $\l_1$ so that for all $\l>\l_1$, we have (\ref{uniform-bound}), (\ref{large-LE-mostx}) and $L(E)>(1-\frac\e2)\log\l$. Let $\delta=\delta(\e)$, $\sigma=\sigma(\a)$, and $n_0=n_0(\a,\e)\in\Z^+$ be as in the statement preceding (\ref{large-LE-mostx}).

By (\ref{large-forward-time}), we clearly have $\bigcup_{n\ge n_0}[R_n^2, R_n^{2\tau}]=[R_{n_0},+\infty)$. Set $i_0=R_{n_0}^2$ and consider $i\ge i_0$. Assume $i\in [R_n^2, R_n^{2\tau}]$ for a some $n\ge n_0$. Then for each $x\in (\R/\Z)\setminus \CB_n$, we have
$$
(1-\frac\e2)\log\l-(1+\frac\e2)\log\l\le \frac1i\log \|A_i(x)\|-L(E)\le(1+\frac\e2)\log\l-(1-\frac\e2)\log\l.
$$
This clearly implies that
\beq\label{deviation}
\left|\frac1i\log \|A_i(x)\|-L(E)\right|\le\e\log\l.
\eeq
Thus $\left|\frac1i\log \|A_i(x)\|-L(E)\right|>\e\log\l$ implies that $x\in \CB_n$. This together with (\ref{bad-set-estimate}) clearly imply our Theorem~\ref{t.large-deviation}.
\end{proof}

Let us prove (\ref{large-LE-mostx}). Consider $x\in (\R/\Z)\setminus \CB_n$. Then by the choice of $R_n$, we have that $x_\ell=x+\ell\a\in I_n$ for some $0<\ell<R_n$. Then write $A_i(x)$ as
$$
A_i(x)=A_{i-\ell}(x_\ell)\cdot A_\ell(x).
$$
Now let us focus on $A_{i-\ell}(x_\ell)$ with $x_\ell\in I_n$. Let $j_0=0$ and $j_m\le i-\ell$, $1\le m\le p$, be the all the possible times such that
$$
j_{m}-j_{m-1}=r^+_n(x_\ell+j_{m-1}\a)\ge q_{N+n-1}.
$$
Note $i-\ell-j_p\le R_n$.

By our choice of $x$ and the definition of $r^+_n:I_n\rightarrow\Z^+$, it holds that
$$
x_\ell+j_m\a\in I_n\setminus\CI_n,\ 0\le \ell\le p.
$$
In other words, we always have $|x+j_m\a-C_{n+1}|>e^{-\delta q_{N+n-1}}$. Thus for a suitable choice of $\delta$, we are able to apply Lemma~\ref{l.nstep-deri-estimate} to concatenate the sequence
\beq\label{sequence-in-LDT}
A_{j_1}(x_\ell),\ldots, A_{j_{m+1}-j_m}(x_\ell+j_m\a),\ldots, A_{j_{p}-j_{p-1}}(x_\ell+j_{p-1}\a).
\eeq
For the given small $\e>0$, apply Theorem~\ref{t.iteration} to $\frac\e4$. Then the analysis above implies the following facts.
\vskip0.2cm
First, for $1\le m\le p$, it holds
\beq\label{large-norm-in-LDT}
\|A_{j_{m}-j_{m-1}}(x_\ell+j_{m-1}\a)\|\ge \l_{N+n}^{r_n}\ge \l_{N+n}^{q_{N+n-1}}\ge \l^{(1-\frac\e4)q_{N+n-1}}.
\eeq
\vskip0.2cm
Secondly, for $1\le m\le p$,
\beq\label{large-dist-to-critical-in-LDT}
|x+j_m\a-C_{n+1}|>e^{-\delta q_{N+n-1}}.
\eeq
Then we claim for $1\le m\le p-1$, it holds that
\beq\label{large-angle-in-LDT}
|s_{r_n}(x_\ell+j_m\a)-u_{r_n}(x_\ell+j_m\a)|=|g_{n+1}(x_\ell+j_m\a)|>e^{-3\delta q_{N+n-1}}.
\eeq
As the proof of Theorem~\ref{t.main}, in case $(n+1)_\I$ and $(n+1)_\II$, (\ref{large-angle-in-LDT}) follows directly from (\ref{large-dist-to-critical-in-LDT}) since there is no extra minimal points of $g_{n+1}:I_n\rightarrow\R\PP^1$. In the resonance case $(n+1)_\III$, we need to worry about the possibility that
\beq\label{resonance-in-LDT}
|x_\ell+j_m\a-c_{n+1,2}'|<ce^{-\delta q_{N+n-1}}.
\eeq
However, either $|c_{n+1,2}'-c_{n+1,1}|\ll e^{-\delta q_{N+n-1}}$ so that (\ref{resonance-in-LDT}) contradicts with (\ref{large-dist-to-critical-in-LDT}). Or by (\ref{i+2-orbit-critical-points}), for some $k<q_{N+n-1}$,
$$
|c_{n+1,2}'+k\a-c_{n+1,2}|<\l_{N+n}^{-\frac1{30}r_n}\ll e^{-\delta q_{N+n-1}},
$$
which together with (\ref{resonance-in-LDT}) implies
$$
|x_\ell+(j_m+k)\a-c_{n+1,2}|\le e^{-\delta q_{N+n-1}}.
$$
This again contradicts with our choice of $x\notin \CB_n$ since $j_m+k<j_p\le i-\ell$ for $1\le m\le p-1$.

Finally, by choosing $n$ large, we can of course assume that for each $1\le m\le p$,
\beq\label{time-lessthan-norm-in-LDT}
p<R_n^{2\tau}=q_{N+n-1}^{2\tau C}<\l^{\frac12 (1-\frac\e4)q_{N+n-1}}<\|A_{j_{m}-j_{m-1}}(x_\ell+j_{m-1}\a)\|^{\frac12}.
\eeq

Now by suitable choice of $\delta$, e.g. $\delta<c\e$, we may assume
$$
e^{3\delta q_{N+n-1}}< \l^{\frac\e 4(1-\frac\e4)q_{N+n-1}}.
$$
Thus, by (\ref{large-norm-in-LDT}), (\ref{large-angle-in-LDT}) and (\ref{time-lessthan-norm-in-LDT}), we can apply Lemma~\ref{l.nstep-deri-estimate} and Remark~\ref{C0-or-C1} to the sequence (\ref{sequence-in-LDT}) and obtain
\begin{align*}
\|A_{j_p}(x_\ell)\|&\ge\left(\prod^p_{m=1}\|A_{j_m-j_{m-1}}(x_\ell+j_{m-1}\a)\|\right)^{1-\frac\e4}\\
&\ge \l^{(1-\frac\e4)^2j_p}\\
&\ge \l^{(1-\frac\e2)j_p}.
\end{align*}
Notice we have $\ell<R_n$, $i-\ell-j_p<R_n$ and $i\ge R_n^2$. Hence,
$$
j_p\ge i-\ell-R_n>i-2R_n.
$$
By choosing $n$ large, we also get $\frac{4R_n}{i}<\frac{4}{R_n}<\frac\e2$. Thus, it holds that
\begin{align*}
\|A_i(x)\|&=\|A_{i-\ell-j_p}(x_\ell+j_p\a)\cdot A_{j_p}(x_\ell)\cdot A_\ell(x)\|\\
&\ge \|A_{i-\ell-j_p}(x_\ell+j_p\a)\|^{-1}\cdot \|A_{j_p}(x_\ell)\|\cdot \|A_\ell(x)\|^{-1}\\
&\ge \l^{-2R_n}\cdot \l^{(1-\frac\e2)j_p}\\
&\ge \l^{-2R_n}\cdot \l^{(1-\frac\e2)(i-2R_n)}\\
&\ge \l^{(1-\frac\e2)i}\cdot \l^{-4R_n}\\
&\ge \l^{(1-\e)i},
\end{align*}
concluding the proof of (\ref{large-LE-mostx}).

\subsection{Continuity of Lyapunov exponents}
Once we have large deviation Theorem~\ref{t.large-deviation}, the weak H\"older continuity of Lyapunov exponents and of IDS follows essentailly from the approach that is developed by Goldstein-Schlag \cite{goldstein}. See also \cite{bourgainschlag}. For the purpose of completeness, we include the proof here. First, we state the following lemma from \cite{goldstein}, which is called ``Avalanche Principle''.

\begin{lemma}\label{l.avlanche-principle}
Let $E^{(1)},\ldots, E^{(n)}$ be a finite sequence in $\mathrm{SL}(2,\R)$ satisfying the following condition
\begin{align}\label{condition-AP}
&\min_{1\le j\le n}\|E^{(j)}\|\ge \mu\ge n,\\
&\max_{1\le j<n}\left|\log \|E^{(j+1)}\|+\log \|E^{(j)}-\log\|E^{(j+1)}E^{(j)}\|\right|<\frac12\log\mu
\end{align}
then
\beq\label{avalanche-principle}
\left|\log\|E^{(n)}\ldots E^{(1)}\|+\sum_{2}^{n-1}\log\|E^{(j)}\|-\sum_{1}^{n-1}\log\|E^{(j+1)}E^{(j)}\|\right|\le C\frac{n}{\mu}.
\eeq
\end{lemma}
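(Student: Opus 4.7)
The plan is to exploit the polar decomposition of each $E^{(j)}$ together with the product-norm identity for large-norm matrices in $\mathrm{SL}(2,\R)$. A direct computation using the rank-one singular value decomposition shows that whenever $\|A\|,\|B\|\ge\mu$,
$$
\log\|BA\| = \log\|A\|+\log\|B\|+\log|\sin\angle(u(A),s(B))|+O(\mu^{-2}),
$$
the error arising from the subdominant rank-one term. Applied to consecutive pairs, this together with hypothesis (\ref{condition-AP}) yields simultaneously the non-resonance bound
$$
|\sin\angle(u(E^{(j)}),s(E^{(j+1)}))|\ge \mu^{-1/2}
$$
and the key identity
$$
\log|\sin\angle(u(E^{(j)}),s(E^{(j+1)}))| = \log\|E^{(j+1)}E^{(j)}\|-\log\|E^{(j+1)}\|-\log\|E^{(j)}\|+O(\mu^{-2}).
$$

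Setting $F_k:=E^{(k)}\cdots E^{(1)}$, the heart of the argument is a simultaneous induction on $k$ that establishes two things: first, that $u(F_k)$ lies within angular distance $O(\mu^{-1})$ of $u(E^{(k)})$, so $F_k$ is essentially aligned with its last factor; and second, the norm estimate
$$
\log\|F_k\| = \sum_{j=1}^{k}\log\|E^{(j)}\|+\sum_{j=1}^{k-1}\log|\sin\angle(u(E^{(j)}),s(E^{(j+1)}))|+O(k/\mu).
$$
The direction-propagation step follows from Corollary~\ref{c.almost-inv-u-direction} (or an equivalent direct analysis of the polar decomposition of $F_k=E^{(k)}F_{k-1}$), since the non-resonance bound prevents $u(F_{k-1})$ from being nearly aligned with $s(E^{(k)})$. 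The norm identity then follows inductively by writing $F_{k+1}=E^{(k+1)}F_k$, applying the product-norm identity of the first paragraph with $F_k$ in place of the left factor, and invoking direction propagation to replace $\sin\angle(u(F_k),s(E^{(k+1)}))$ by $\sin\angle(u(E^{(k)}),s(E^{(k+1)}))$ at a cost of $O(\mu^{-1})$ per step.

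At $k=n$, I would substitute the identity from the first paragraph into each of the $n-1$ angle-logarithms, turning the sum into $\sum_{j=1}^{n-1}[\log\|E^{(j+1)}E^{(j)}\|-\log\|E^{(j+1)}\|-\log\|E^{(j)}\|]$, and combine with the leading $\sum_{j=1}^{n}\log\|E^{(j)}\|$ term. A routine rearrangement — in which the boundary indices $j=1,n$ each survive with coefficient zero while the interior indices $2\le j\le n-1$ appear once positively and twice negatively — collapses the total to $-\sum_{j=2}^{n-1}\log\|E^{(j)}\|$, which is exactly (\ref{avalanche-principle}). The main obstacle is the bookkeeping of error terms: a naive perturbation bound for $\sin\angle$ near its lower bound $\mu^{-1/2}$ gives a per-step error of $O(\mu^{-1/2})$, leading to an unacceptable $O(n\mu^{-1/2})$ total. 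Reaching the sharp $O(n/\mu)$ requires keeping the quadratic remainder in the SVD expansion rather than the linear one; this is precisely why the hypothesis (\ref{condition-AP}) is formulated with the additive gap $\tfrac12\log\mu$ and the scaling $\mu\ge n$ is the correct matching.
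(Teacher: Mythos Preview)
The paper does not give its own proof of Lemma~\ref{l.avlanche-principle}; it simply refers to \cite[Proposition~2.2]{goldstein}. Your sketch follows essentially the same polar-decomposition and direction-propagation strategy as the Goldstein--Schlag argument, so the overall route is the standard one.

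Your outline is correct, but the error bookkeeping at the end contains a real gap. The direction bound you state, $|u(F_k)-u(E^{(k)})|=O(\mu^{-1})$, is too weak: as you yourself note, combined with the angle lower bound $|\sin\angle|\ge\mu^{-1/2}$ it only yields a per-step log-error of $O(\mu^{-1/2})$. Your proposed remedy --- ``keeping the quadratic remainder in the SVD expansion'' --- is vague and is not the mechanism that closes the estimate. What actually works is that the direction bound itself is sharper: writing $F_k=E^{(k)}F_{k-1}$ in polar form and computing $F_kF_k^{T}$ directly gives
\[
|\tan(u(F_k)-u(E^{(k)}))|\;\le\;\frac{1}{\|E^{(k)}\|^{2}\,|\cos\theta_k|}\;\le\;\frac{C}{\mu^{2}\cdot\mu^{-1/2}}\;=\;C\mu^{-3/2},
\]
so $|u(F_k)-u(E^{(k)})|\le C\mu^{-3/2}$, not merely $O(\mu^{-1})$. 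Dividing by the angle lower bound $\mu^{-1/2}$ then gives the required $O(\mu^{-1})$ contribution to the log at each step, and hence $O(n/\mu)$ in total. (Note also that Corollary~\ref{c.almost-inv-u-direction}, which you invoke, bounds $|E_1\cdot u(E_2)-u(E_1E_2)|$ under the hypothesis $\|E_2\|\gg\|E_1\|$; this is not the estimate you need here, and in any case the hypothesis fails for small $k$.)
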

\noindent See \cite[Proposition 2.2]{goldstein} for a proof of Lemma~\ref{l.avlanche-principle}. It's probably interesting to point out that there is some intrinsic  relation between the $C^0$ version of Lemma~\ref{l.nstep-deri-estimate} (see Remark \ref{C0-or-C1}) and Lemma~\ref{l.avlanche-principle}. In fact, though taking different point of views, both of them deal with long finite concatenation of $\mathrm{SL}(2,\R)$ matrices, and the conditions assumed in both lemmas are similar. Let
$$
L_n(E)=\frac1n\int_{x\in\R/\Z}\log\|A^{(E-v)}_n(x)\|dx.
$$
Then, combining Lemma~\ref{l.avlanche-principle} and Theorem~\ref{t.large-deviation}, we get the following Lemma. Let us fix some $\e$ small. For instance, $\e=\frac1{100}$ will be enough for our purpose. Then we may also replace $\delta$ in Theorem~\ref{t.large-deviation} by $c$ since we may set $\delta=c\e=\frac c{100}$.

\begin{lemma}\label{l.well-approximate-LE}
Let $v,\a,\l$ be as in Theorem~\ref{t.large-deviation}, then for all $n\in\Z_+$ large enough
\beq\label{well-approximate-LE}
|L(E)+L_n(E)-2L_{2n}(E)|<Ce^{-cn^\sigma},
\eeq
where $c,C$ depend on $v,\a,\l$, and $\sigma$ on $\a$.
\end{lemma}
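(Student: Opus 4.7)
\emph{Plan.} The argument is the standard one of Goldstein--Schlag \cite{goldstein}, combining the Avalanche Principle (Lemma~\ref{l.avlanche-principle}) with the large deviation estimate just proved (Theorem~\ref{t.large-deviation}). Fix a small $\e>0$. For $x\in\R/\Z$ and an integer $N$ to be chosen, I consider the blocks $E^{(j)}(x):=A_n(x+(j-1)n\a)$, $j=1,\dots,N$, whose product is $A_{Nn}(x)$ and whose consecutive pairs satisfy $E^{(j+1)}E^{(j)}=A_{2n}(x+(j-1)n\a)$. By Theorem~\ref{t.main} we may assume $L(E)>(1-\e)\log\l$, and we set $\mu:=e^{(L(E)-2\e\log\l)n}$, requiring $N\le\mu$. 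Applying Theorem~\ref{t.large-deviation} to $A_n$ and $A_{2n}$ with deviation threshold $\e\log\l$, the bad set
\[
\CB := \bigcup_{j=1}^{N}\left\{x:\left|\tfrac{1}{n}\log\|A_n(x+(j-1)n\a)\|-L(E)\right|>\e\log\l\right\} \cup \bigcup_{j=1}^{N-1}\left\{x:\left|\tfrac{1}{2n}\log\|A_{2n}(x+(j-1)n\a)\|-L(E)\right|>\e\log\l\right\}
\]
has measure at most $2Ne^{-\frac{c}{2}n^\sigma}$; off $\CB$, both hypotheses of Lemma~\ref{l.avlanche-principle} are satisfied.

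Next I would integrate the pointwise conclusion of the Avalanche Principle over $\R/\Z$. On $\CB$, each logarithm in the AP inequality is bounded by $Cn\log\l$ thanks to the uniform estimate \eqref{uniform-bound}, so the bad-set contribution is at most $CN^2 n\log\l\cdot e^{-\frac{c}{2}n^\sigma}$. Using translation invariance $\int_{\R/\Z}\log\|A_k(x+m\a)\|\,dx = kL_k(E)$, dividing by $Nn$, and absorbing the $O(\log\l/N)$ discrepancy between the coefficients $\tfrac{N-2}{N},\tfrac{2(N-1)}{N}$ and $1,2$, one arrives at the master inequality
\[
\left|L_{Nn}(E)+L_n(E)-2L_{2n}(E)\right| \leq \frac{C}{n\mu} + CN\log\l\cdot e^{-\frac{c}{2}n^\sigma} + \frac{C\log\l}{N}.
\]

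Finally, I would choose $N=N_0:=e^{\frac{c}{4}n^\sigma}$, which is admissible ($N_0\le\mu$) because $\sigma<1$, and which balances the last two error terms, yielding $|L_{N_0 n}(E)+L_n(E)-2L_{2n}(E)|\le Ce^{-c'n^\sigma}$ for a new constant $c'>0$. By subadditivity $L_{N_0 n}(E)\ge L(E)$, so this immediately gives the one-sided bound $L(E)+L_n(E)-2L_{2n}(E)\le Ce^{-c'n^\sigma}$. For the matching lower bound one needs $L_{N_0 n}(E)-L(E)\le Ce^{-c'n^\sigma}$, which is obtained by the Goldstein--Schlag bootstrap: apply the master inequality at the iterated scales $m_0:=n$, $m_{k+1}:=N_k m_k$ with $N_k:=\exp(\tfrac{c}{4}m_k^\sigma)$, whose error terms decay doubly-exponentially in $k$, and telescope; the resulting geometric series is controlled by its first term $Ce^{-c'n^\sigma}$. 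The main obstacle is the bookkeeping in this iteration, but it is routine once the master inequality is in hand, since the scales grow super-exponentially and the errors shrink correspondingly.
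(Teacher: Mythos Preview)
Your proposal is correct and follows essentially the same Goldstein--Schlag scheme as the paper: apply the Avalanche Principle to blocks $A_n(x+(j-1)n\a)$, use the LDT of Theorem~\ref{t.large-deviation} to verify its hypotheses off a small exceptional set, integrate, and then bootstrap across super-exponentially growing scales to pass from $L_{Nn}$ to $L$. The only point where your sketch is thinner than the paper is the telescoping step: the master inequality controls $|L_{m_{k+1}}+L_{m_k}-2L_{2m_k}|$, not $|L_{m_{k+1}}-L_{m_k}|$, so one needs the extra ``doubling'' observation (apply the master inequality at the same base scale $m_k$ with both $N_k$ and $2N_k$ blocks and subtract to get $|L_{2m_{k+1}}-L_{m_{k+1}}|\le Ce^{-cm_k^\sigma}$) before the telescope closes --- this is presumably what you are folding into ``routine bookkeeping,'' and it is indeed routine.
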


\begin{proof}
Apply Lemma~\ref{l.avlanche-principle} to
$$
n\le e^{\frac\delta{10}\ell^\sigma},\ E^{(j)}=A_\ell(x+(j-1)\ell\a).
$$
From (\ref{large-deviation}), with $i=\ell$, there is an exceptional set $\Omega\subset\R/\Z$ with
$$
\mathrm{Leb}(\Omega)<e^{-\frac14\delta\ell^\sigma}
$$
such that if $x\notin\Omega$ and $j=0,\ldots,n$ then
$$
(1-\e)\log\l<\frac1\ell\log\|A_\ell(x+j\ell\a)\|,\ \frac1{2\ell}\log\|A_{2\ell}(x+j\ell\a)\|<\log\l.
$$
Hence for $x\notin\Omega$
$$
\left|\log \|E^{(j+1)}\|+\log \|E^{(j)}-\log\|E^{(j+1)}E^{(j)}\|\right|\le 2\e\log\l<\frac{1-\e}{2}\log\l.
$$
Thus taking
$$
\mu=e^{\frac{(1-\e)\ell}{2}\l}
$$
condition~(\ref{condition-AP}), (81) of Lemma~\ref{l.avlanche-principle} are clearly fulfilled. For $x\notin\Omega$ the conclusion (\ref{avalanche-principle}) is that
$$
\left|\log\|A_{\ell n}(x)\|+\sum_{2}^{n-1}\log\|A_{\ell}(x+(j-1)\ell\a)\|-\sum_{1}^{n-1}\log\|A_{2\ell}(x+(j-1)\ell\a)\| \right|\le C\frac{n}{\mu}.
$$

Divide the above inequality by $\ell n$ and integrate it in $x\in\R/\Z$. Splitting the integration as $(\R/\Z)\setminus\Omega$ and $\Omega$, we get
$$
\left|L_{\ell n}(E)+\frac{n-2}{n}L_\ell(E)-\frac{2(n-1)}{n}L_{2\ell}(E)\right|<C\left(\mu^{-1}\ell^{-1}+\mathrm{Leb}(\Omega)\right)<Ce^{-\frac14\delta\ell^\sigma}.
$$
Note here $C$ depends on $\l$. Hence we obtain
\beq\label{avalanche-consequence1}
\left|L_{\ell n}(E)+L_\ell(E)-2L_{2\ell}(E)\right|<Ce^{-\frac14\delta\ell^\sigma}.
\eeq
Applying (\ref{avalanche-consequence1}) to $n=n_1=\ell$ large enough and $\log n_{2}\sim n_{1}^\sigma$ yields
\beq\label{avalanche-consequence2}
\left|L_{n_2}(E)+L_{n_1}(E)-2L_{2n_1}(E)\right|<Ce^{-\frac14\delta n_{1}^\sigma}.
\eeq
Applying (\ref{avalanche-consequence1}) to $n=n_1=\ell$ and $2n_2$ yields
$$
\left|L_{2n_2}(E)+L_{n_1}(E)-2L_{2n_1}(E)\right|<Ce^{-\frac14\delta n_{1}^\sigma}.
$$
Therefore, we get
\beq\label{avalanche-consequence3}
|L_{2n_2}(E)-L_{n_2}(E)|<Ce^{-\frac14\delta n_{1}^\sigma}.
\eeq
Clearly, in (\ref{avalanche-consequence2}) and (\ref{avalanche-consequence3}), we may replace $n_1$ and $n_2$ by any $n_s$ and $\log n_{s+1}\sim n_s^{\sigma}$. Thus we obtain
\begin{align*}
|L(E)-L_{n_2}(E)|&\le \sum_{s\ge2}|L_{n_{s+1}}(E)-L_{n_{s}}(E)|\\
&\le 2\sum_{s\ge2}\left(|L_{2n_{s}}(E)-L_{n_{s}}(E)|+Ce^{-\frac14\delta n_{s}^\sigma}\right)\\
&< 4\sum_{s\ge1}Ce^{-\frac14\delta n_{s}^\sigma}<Ce^{-\frac14\delta n^\sigma}.
\end{align*}
Thus, replacing $L_{n_2}(E)$ by $L(E)$ in (\ref{avalanche-consequence2}) yields (\ref{well-approximate-LE}).
\end{proof}

Then we are already to prove Theorem~\ref{t.continuity}

\begin{proof}({\bf Proof of Theorem~\ref{t.continuity}})
Clearly, we only need to show (\ref{log-holder}) for $E'$ and $E$ in the statement of Theorem~\ref{t.continuity} that are sufficiently close to each other. It is straightforward computation to see that $|L_n(E)-L_n(E')|<C^n|E-E'|$. So by (\ref{well-approximate-LE}), for all large $n$
$$
|L(E)-L(E')|<C^n|E-E'|+Ce^{-cn^\sigma}.
$$
Then, by choosing
$$
n=\left[\frac1{2\log C}\log\frac1{|E-E'|}\right],
$$
we clearly get Lyapunov exponents part of (\ref{log-holder})
\beq\label{log-holder-LE}
|L(E)-L(E')|<Ce^{-c(\log|E_1-E_2|^{-1})^\sigma}.
\eeq
This says that Lyapunov exponents is weak H\"older continuous with respect to the energy. Now for the IDS part of (\ref{log-holder}), by (\ref{log-holder-LE}) and the discussion following (\ref{thouless}), we obtain for $E,E'\in\R$,
$$
|N(E)-N(E')|<Ce^{-c(\log|E-E'|^{-1})^\sigma},
$$
concluding the proof of Theorem~\ref{t.continuity}.
\end{proof}

\appendix

\section{{\bf Proof of Lemma~\ref{l.reduce-form}--\ref{l.s-deri-resonance}}}\label{a}

\subsection{Polar decomposition of Schr\"odinger cocycles: Proof of Lemma~\ref{l.reduce-form} }\label{a.1}
\ \ \
In this subsection, we do the polar decomposition of the Schr\"odinger cocycles to reduce it to the form (\ref{polar-decom}). It is basically from Section 3.2, 4.2 and 5.3 of \cite{zhang}.

For $B\in \mathrm{SL}(2,\mathbb R)$, it is a standard result that we can decompose it as $B=U_1\sqrt{B^tB}$, where $U_1\in \mathrm{SO}(2,\mathbb R)$ and $\sqrt{B^tB}$ is a positive symmetric matrix. We can further decompose $\sqrt{B^tB}$ as $\sqrt{B^tB}=U_2\Lambda U_{2}^t$, where $U_2\in \mathrm{SO}(2,\mathbb R)$ and $\Lambda=\begin{pmatrix}\|B\|&0\\ 0&\|B\|^{-1}\end{pmatrix}$, thus $B=U_1U_2\Lambda U_2^t$.

Consider a map $B\in C^r(\mathbb R/\mathbb Z, SL(2,\mathbb R))$ for some $r\ge1$. Then, it can be decomposed as
$$
B(x)=U_1(x)U_2(x)\Lambda(x) U_2^t(x).
$$
By Lemma 10 of \cite{zhang}, $U_1(x)$, $U_2(x)$ and $\Lambda(x)$ are $C^r$ in $x$ as long
as $B(x)$ does not touch $\mathrm{SO}(2,\mathbb R)$. Hence, we have
$$
(U_1U_2)^t(x)B(x)(U_1U_2)(x-\alpha)=\Lambda(x)U(x),
$$
where $U(x)=U_2^t(x)(U_1U_2)(x-\alpha)\in \mathrm{SO}(2,\mathbb R)$. Let $c(x,t)$ be the upper left element of $U(x,t)$. \\

Now let us come back to the Schr\"odinger cocycles, we first use a simple trick to avoid that $A^{(E-\l v)}$ can always touch $\mathrm{SO}(2,\mathbb R)$ for $t=\frac{E}{\l}\in v(\mathbb R/\mathbb Z)$, which leads to the discontinuity of the polar decomposition. We instead consider $A^{(t,\l)}=TA^{(E-\l v))}T^{-1}$, where
$$
T=\begin{pmatrix}\sqrt{\lambda}^{-1}& 0\\0& \sqrt{\lambda}\end{pmatrix}
$$
This obviously does not change the dynamics. Thus
$$
A(x)=A^{(t,\lambda)}(x)=\begin{pmatrix}\lambda[t-v(x)]& -\lambda^{-1}\\\lambda & 0\end{pmatrix}.
$$
Let $r(x,t)=t-v(x)$. Then $r(x,t)$ is uniformly bounded on $\mathbb R/\mathbb Z\times\CI$, where $\CI\subset\R$ is any compact interval. If we set
$$
a=a(x,t,\lambda)=r^2+1+\frac{1}{\lambda^4}+\sqrt{(r^2+1+\frac{1}{\lambda^4})^2-\frac{4}{\lambda^4}},
$$
then obviously $\left|\frac{\partial^ma}{\partial^mx}\right|^{\pm}$ are uniformly bounded for all $(x,t,\lambda)\in\mathbb R/\mathbb Z\times\CI\times[c, \infty)$ and $m=0,1,3$. Then a direct computation shows that $\|A\|=\lambda\sqrt{\frac{a}{2}}$. Thus, it is clear that (\ref{norm1-deri-control}) holds for large $\l$.

A direct computation shows
$$
U_2=\frac{1}{\sqrt{(a-\frac{2}{\lambda^4})^2+\frac{4}{\lambda^4}r(x)^2}}\begin{pmatrix} a-\frac{2}{\lambda^4}& \frac{2}{\lambda^2}r(x)\\ -\frac{2}{\lambda^2}r(x)& a-\frac{2}{\lambda^4} \end{pmatrix}
$$
For simplicity let
$$
f(x,t,\lambda)=\left(\sqrt{(a-\frac{2}{\lambda^4})^2+\frac{4}{\lambda^4}r(x)^2}\right)^{-1}.
$$
Thus we get that the corresponding upper-left element of $U$ is
$$
c(x,t,\lambda,\alpha)=c_4\left\{r(x-\alpha)-\frac{2r(x)}{\lambda^2a(x)}+\frac{2r(x-\alpha)}{\lambda^4a(x)}-\frac{4r(x)}{\lambda^6 a(x-\alpha)a(x)}\right\},
$$
where
$$
c_4=\sqrt{\frac{2}{a(x-\alpha)}}f(x)f(x-\alpha)a(x)a(x-\alpha).
$$
Hence, we have $c(x,t,\infty,\alpha)=\frac{t-v(x-\alpha)}{\sqrt{(t-v(x-\alpha))^2+1}}$. Furthermore, it is not difficult to see that for any fixed $\alpha$,
\begin{center}
$c(x,t,\lambda,\alpha)\rightarrow c(x,t,\infty,\alpha)$ in $C^2(\mathbb R/\mathbb Z\times\CI,\mathbb R)$ as $\lambda\rightarrow\infty$.
\end{center}
Indeed, it is easy to see this reduces to the convergence of $a(x,t,\l)$ to $a(x,t,\infty)$ in $C^2$ topology, which is immediate.

In the proof of Theorem~\ref{t.iteration}, it is clear that the only important thing about the matrix $U=R_{\theta(x)}$ is the $C^2$ shape of the function $\theta$, see, for example, Corollary~\ref{c.general}. Thus, we could replace $c(x,t,\l,\a)$ by $c(x,t,\infty,\a)$ for large $\l$ since they are sufficiently close in $C^2$ norm. After a translation, we can of course replace $v(x-\a)$ by $v(x)$. This completes the proof.

\subsection{Nonresonance case: proof of Lemma~\ref{l.ess-change}--\ref{l.nstep-deri-estimate}}\label{b}

In this subsection, we will prove Lemma~\ref{l.ess-change}--\ref{l.nstep-deri-estimate}.
\vskip 0.3cm
\noindent Let us start with Lemma~\ref{l.ess-change}.
\begin{proof}(\textbf{Proof of Lemma~\ref{l.ess-change}})
It suffices to consider $s(x)$ since all estimates for $u(x)$ follow from the same way. By the polar decomposition procedure we have that $s(x)=\frac{\pi}{2}+\theta_2(x)$, where $\theta_2(x)$ is the eigen-direction of $E^t(x)E(x)$ corresponding to the eigenvalue $\|E(x)\|^2$. For simplicity, let us omit the dependence on $x$ in the following computation. A direct computation show that
$$
E^tE=\begin{pmatrix}e_1^2e_2^2\cos^2\t+e_1^2e_2^{-2}\sin^2\t& (e_2^{-2}-e_2^{2})\sin\t\cos\t\\(e_2^{-2}-e_2^{2})\sin\t\cos\t& e_1^{-2}e_2^{-2}\cos^2\t+e_2^2e_1^{-2}\sin^2\t\end{pmatrix}.
$$

Let $a=e_1e_2$ and $b=\frac{e_1}{e_2}$. Then it is easily calculated that
$$
\tan s(x)=-\cot\theta_2=\frac{w}{u},\mbox{ where } w=2(e_2^{2}-e_2^{-2})\sin\t\cos\t\mbox{ and }
$$
$$
u=\sqrt{[(a^2+a^{-2}) \cos^2\t+(b^2+b^{-2})\sin^2\t]^2-4}-[(a^2-a^{-2}) \cos^2\t+(b^{2}-b^{-2})\sin^2\t]
$$
Let $U=[(a^2-a^{-2}) \cos^2\t+(b^{2}-b^{-2})\sin^2\t]$, then it is easy to see that
$$
u=\sqrt{U^2+w^2}-U.
$$
Thus it is straightforward that
$$
\tan s(x)=\frac{w}{\sqrt{U^2+w^2}-U}=\frac{\sqrt{U^2+w^2}+U}{w}
$$
To simplify the above formula, we divide it into the following cases.\\

\noindent $(\mathbf{a})$ $e_1>e_2\gg1$. Then obviously $U>0$. Without loss of generality, assume $w>0$. Then we have
$$
\tan s(x)=\frac{\sqrt{U^2+w^2}+U}{w}=\sqrt{\frac{U^2}{w^2}+1}+\frac{U}{w}.
$$
After dropping some small terms that are not important in all the estimates, we get
$$
\frac{U}{w}\approx\frac12(e_1^2\cot\t+e_1^2e_2^{-4}\tan\t)=f_1(x),
$$
which implies (\ref{s-e1-great-e2}).\\

\noindent $(\mathbf{b})$ $e_2>e_1\gg1$. Without loss of generality, assume $w>0$. Then let us first assume that $U\ge0$, which approximately implies that
$$
|a^2\cos^2\t|\ge b^{-2}\Rightarrow|\cos\t|\ge a^{-2}b^{-2}=e_1^{-2}.
$$
Then after dropping some small terms and by the same argument as in the case $(a)$, we get that
$$
\tan s(x)=\frac{\sqrt{U^2+w^2}+U}{w}=\sqrt{\frac{U^2}{w^2}+1}+\frac{U}{w} \mbox{ with } \frac{U}{w}\approx\frac12(e_1^2\cot\t-\frac{1}{e_1^2\cot\t}).
$$
It is clear that we have
$$
\sqrt{\frac{U^2}{w^2}+1}=\frac12(e_1^2\cot\t+\frac{1}{e_1^2\cot\t}).
$$
Thus we get (\ref{s-e2-great-e1}) in this case.\\

Now if $U<0$, which implies that $|\cot\t|<e_1^{-2}$, then we have
\begin{align*}
\tan s(x)&=\frac{w}{\sqrt{U^2+w^2}-U}=\frac1{\sqrt{\frac{U^2}{w^2}+1}+\frac{-U}{w}}\\ &\approx\frac{1}{\frac12(e_1^2\cot\t+\frac{1}{e_1^2\cot\t})-\frac12(e_1^2\cot\t-\frac{1}{e_1^2\cot\t})}\\
&=e_1^2\cot\t
\end{align*}
\vskip 0.2cm

\noindent $(\mathbf{c})$ $e_2=e_1\gg1$. Then a direct computation shows that
$$
\tan s(x),\ \cot u(x)=\frac{\sqrt{(e_1^2+e_1^{-2})^2+4\tan^2\t}+e_1^2+e_1^{-2}}{2\tan\t},
$$
which clearly implies (\ref{s-e1-equal-e2}) after dropping some small terms.

This completes the proof.
\end{proof}

\vskip 0.6cm
\noindent Next we prove Lemma \ref{l.1step-deri-estimate}.
\begin{proof}(\textbf{Proof of Lemma~\ref{l.1step-deri-estimate}})
It is enough to consider only $s$ since $u$ can be treated similarly. By our assumption, we obviously have $|\cot\t|>ce_0^{-\eta}$. Then by  (\ref{s-e2-great-e1})-(\ref{s-e1-equal-e2}) of Lemma~\ref{l.ess-change}, it is a straightforward calculation to see that $\tan s$ is always dominated by $e_1^2\cot\t$ for $C^2$ estimate. In other words, we have
$$
\left|\frac{d^m s}{dx^m}\right|<C\left|\frac{d^m\tan^{-1}(e_1^2\cot\t)}{dx^m}\right|\mbox{ for } m=0,1,2.
$$
 Thus, from the case $m=0$, we get
 \beq\label{1step-deri-estimate1}
 \left|s-\frac\pi2\right|_{I}<Ce_1^{-(2-\eta)}.
 \eeq
 From $m=1$, we get
$$
\left|\frac{ds}{dx}\right|<\frac{C}{1+e_1^4\cot^2\t}\left|2e_1e'_1\cot\t-e_1^2\t'\cot^2\t-e_1^2\t'\right|.
$$
By our assumption, it is easy to see that we get
$$
\left|\frac{ds}{dx}\right|<C\left|\frac{2e'_1}{e_1^3\cot\t}\right|+C\left|\frac{\t'}{e_1^2}\right|+C\left|\frac{\t'}{e_1^2\cot^2\t}\right|.
$$
Hence, we have
\beq\label{1step-deri-estimate2}
\left|\frac{ds}{dx}\right|_{I}<Ce_1^{-(2-2\eta)}+Ce_1^{-(2-\eta)}+Ce_1^{-(2-3\eta)}<Ce_1^{-(2-3\eta)}.
\eeq
For $m=2$, we get
$$
\left|\frac{d^2 s}{dx^2}\right|<C\left|\frac{d^2\tan^{-1}(e_1^2\cot\t)}{dx^2}\right|.
$$
Thus, we have
$$
\left|\frac{d^2 s}{dx^2}\right|<C\left|\frac{1}{1+(e_1^2\cot\t)^2 }\frac{d^2(e_1^2\cot\t)}{dx^2}-\frac{2e_1^2\cot\t}{[1+(e_1^2\cot\t)^2]^2}\left[\frac{d(e_1^2\cot\t)}{dx}\right]^2\right|.
$$
A direct computation shows that, after dropping some relatively small terms, we have
$$
\left|\frac{d^2s}{dx^2}\right|<C\left|\frac{2e_1''}{e_1^3\cot\t}+\frac{4e_1'\t'}{e_1^3\cot^2\t}-\frac{6(e_1')^2}{e_1^4\cot\t}+\frac{2(\t')^2}{e_1^2\cot^3\t}
-\frac{2\t''}{e_1^2\cot^2\t}\right|.
$$
Hence, we obtain
\beq\label{1step-deri-estimate3}
\left|\frac{d^2s}{dx^2}\right|_{I}<Ce_1^{-(2-3\eta)}+Ce_1^{-(2-4\eta)}+Ce_1^{(2-5\eta)}<Ce_1^{-(2-5\eta)}.
\eeq

Clearly, (\ref{1step-deri-estimate1})--(\ref{1step-deri-estimate3}) imply (\ref{1step-deri-estimate4}). Now let us consider $e_3(x)$. Without loss of generality, assume $\cos\t>0$. Note we have $\cos\t>e_1^{-\eta}$ since $|\t-\frac\pi2|>e_1^{-\eta}$. Then by our assumption, after dropping some small term, we have
$$
e_3=e_1e_2\cos\t.
$$
Hence, we have
$$
\frac{de_3}{dx}=e_1'e_2\cos\t+e_1e_2'\cos\t-e_1e_2\t'\sin\t.
$$
It is clear that it is dominated by the first two terms. Hence, we have
$$
\left|\frac{de_3}{dx}\right|<C|e_1e_2^{1+\eta}\cos\t|+C|e_1^{1+\eta}e_1\cos\t|<C(e_2e_1\cos\t)^{1+\eta}<Ce_3^{1+\eta}.
$$
Then it is easy to see that after dropping some small term, we have
\begin{align*}
\left|\frac{d^2e_3}{dx^2}\right|<&C|e_1e_2''\cos\t|+C|e_1''e_2\cos\t|+Ce_1'e_2'\cos\t\\<&Ce_1e_2^{1+2\eta}\cos\t+Ce_1^{1+2\eta}e_2\cos\t+
Ce_1^{1+\eta}e_2^{1+\eta}\cos\t\\ <&C(e_1e_2\cos\t)^{1+2\eta}<Ce_3^{1+2\eta}.
\end{align*}
This concludes the proof.
\end{proof}

\vskip 0.2cm
Now we are ready to prove Lemma~\ref{l.nstep-deri-estimate}. In fact, it is an easy Corollary of Lemma~\ref{l.1step-deri-estimate}.

\begin{proof}({\bf Proof of Lemma~\ref{l.nstep-deri-estimate}}). Recall in Lemma~\ref{l.nstep-deri-estimate}, we consider a sequence of maps
$$
E^{(\ell)}\in C^2(I,\mathrm{SL}(2,\mathbb R)),\ 0\le \ell\le n-1.
$$
We have $s^{(\ell)}=s(E^{(l)})$, $u^{(\ell)}=s[(E^{(l)})^{-1}]$, $\l_\ell=\|E^{(\ell)}\|$, $\Lambda^{(\ell)}=\begin{pmatrix}\l_\ell&0\\0&\l_\ell^{-1}\end{pmatrix}$ and $E^{(\ell)}=R_{u^{(\ell)}}\Lambda^{(\ell)}R_{\frac\pi2-s^{(\ell)}}.$

Also $E_k(x)=E^{(k-1)}(x)\cdots E^{(0)}(x)$, $1\le k\le n$,
$$
s_{k}=s(E_{k}),\ u_{k}=s(E_{k}),\ l_{k}=\|E_{k}\|,\ L_{k}=\begin{pmatrix}l_{k}&0\\0&\l_{k}^{-1}\end{pmatrix},
$$
$E_{k}=R_{u_{k}}L_{k}R_{\frac\pi2-s_{k}}$ and $\l'=\min_{0\le\ell\le n-1}\{\l_\ell\}$. Then we have the following condition: $n<C\l'^{\frac12}$ and for any $x\in I$, $m=1,2$ and $0\le\ell\le n-1,$
$$
\left|\frac{d^m\l_\ell}{dx^m}(x)\right|< C\l_\ell^{1+m\eta};\ \left|\frac{d^ms^{(\ell)}}{dx^m}\right|,\ \left|\frac{d^mu^{(\ell)}}{dx^m}\right|<C\l'^{\eta},\ |s^{(\ell)}-u^{(\ell-1)}|>c\l'^{-\eta}.
$$
We want to show that
\begin{itemize}
\item $\left\|u^{(n-1)}-u_{n}\right\|_{C^2}<C\l_{n-1}^{-(2-5\eta)},\ \left\|s^{(0)}-s_n\right\|_{C^2}<C\l_0^{-(2-5\eta)}$;
\item $\left|\frac{d^ml_n}{dx^m}(x)\right|< Cl_n^{1+m\eta}$;
\item $l_n>\left(\prod^{n-1}_{\ell=0}\l_\ell\right)^{1-\eta}$.
\end{itemize}
Now applying Lemma~\ref{l.1step-deri-estimate} and our assumptions for $\ell=0$, $1$ to $E_{2}=E^{(0)}\cdot E^{(1)}$, we get
$$
\left\|s_{1}-s_2\right\|_{C^2}<C\l_{0}^{-(2-5\eta)},\ \left\|u^{(1)}-u_2\right\|_{C^2}<C\l_{1}^{-(2-5\eta)},
$$
$$
\left|\frac{dl_{2}}{dx}\right|<l_{2}^{1+\eta},\mbox{ and } \left|\frac{d^2l_{2}}{dx^2}\right|<l_{2}^{1+2\eta}.
$$
Clearly, together with our assumption, $\left\|u^{(1)}-u_2\right\|_{C^2}<C\l_{1}^{-(2-5\eta)}$ implies
$$
\left|\frac{d^mu_{2}}{dx^m}\right|<C\l'^{\eta},\ m=1,2\mbox{ and }|u_{2}-s^{(2)}|>c\l'^{-\eta}
$$
Combined with the assumption in case $\ell=2$, we are able to move to the next step $E_{3}=E^{(2)}\cdot E_2$. Since $n<C\l'^{\frac12}$, by induction, we get
$$
\left\|s^{(1)}-s_{n}\right\|_{C^2}<C\sum^{n}_{j=1}L_j^{-(2-5\eta)}<CL_1^{-(2-5\eta)}=C\l_0^{-(2-5\eta)},
$$
$$
\left\|u^{(n-1)}-u_n\right\|_{C^2}<C\l_{n-1}^{-(2-5\eta)}\mbox{ and } \left|\frac{d^ml_{n}}{dx^m}(x)\right|< Cl_{n}^{1+m\eta},\ m=1,2
$$
The last estimate is straightforward. Because by our assumption, we clearly have
$$
l_k>c\l_k\cdot l_{k-1}|\cos(s^{(k)}-u_k)|>c\l_k\cdot l_{k-1}\l'^{-\eta},\ 2\le k\le n.
$$
Since $\l'$ is sufficiently large and growth super-exponentially fast in our induction, $c$ can be absorbed into $\l'^{-\eta}$. Hence, we obtain
$$
l_n>\prod^{n-1}_{\ell=0}(\l'^{-\eta}\l_\ell)>\left(\prod^{n-1}_{\ell=0}\l_\ell\right)^{1-\eta},
$$
which completes the proof.
\end{proof}

\vskip 0.3cm
\subsection{Resonance case: proof of Lemma~\ref{l.norm-deri}--\ref{l.s-deri-resonance}}\label{c}

Let us first consider Lemma~\ref{l.norm-deri}.
\begin{proof}({\bf Proof of Lemma~\ref{l.norm-deri}}). Note we have $E(x)=E_2(x)\cdot E_1(x)$, $e_j=\|E_j(x)\|$, $e_3(x)=\|E(x)\|$, $e_0=\min\{e_1,e_2\}$, $\t(x)=s[E_2(x)]-u[E_1^{-1}(x)]$ and $0<\eta\ll 1$. In addition, we have $e_1\le e_2^\b$ or $e_2\le e_1^\b$ for some $0<\beta\ll1$ and for each $x\in I$, $j,m=1,2$,
$$
\left|\frac{d^me_j}{dx^m}(x)\right|< Ce_j^{1+2\eta};\ \left|\frac{d^m\t}{dx^m}\right|<Ce_0^{\eta}.
$$
Then we want to show that (\ref{norm-deri2})--(\ref{norm-deri1}), which are the following. For $m=1,2$
\begin{itemize}
\item $\left|\frac{d^ms[E(x)]}{dx^m}\right|<Ce_1^{2+4\eta},\ \left|\frac{d^mu[E(x)]}{dx^m}\right|<Ce_3^{-\frac32} \mbox{ if } e_1\le e_2^\b;$
\item $\left|\frac{d^mu[E(x)]}{dx^m}\right|<Ce_2^{2+4\eta},\ \left|\frac{d^ms[E(x)]}{dx^m}\right|<Ce_3^{-\frac32} \mbox{ if } e_2\le e_1^\b;$
\item $\left|\frac{d^me_3}{dx^m}(x)\right|<Ce_3^{1+m\eta+2m\eta\b}$.
\end{itemize}
For first two estimate, which are (\ref{norm-deri2}) and (\ref{norm-deri3}), it again suffices to consider $s$ since $u$ can be done similarly. Note the difference between the current situation and the one for (\ref{1step-deri-estimate4}) is that, here it is possible that $\t=\frac\pi2$. Now for (\ref{norm-deri2}), we apply the first estimate of (\ref{s-e2-great-e1}). For $m=1$, similar to the proof of (\ref{1step-deri-estimate2}), we get
$$
\left|\frac{ds}{dx}\right|<\frac{C}{1+e_1^4\cot^2\t}\left|2e_1e'_1\cot\t-e_1^2\t'\cot^2\t-e_1^2\t'\right|<Ce_1^{2+\eta},
$$
where the worst case happens when $\t=\frac\pi2$, hence, $\cot\t=0$. For $m=2$, similar to the proof of (\ref{1step-deri-estimate3}), we get
\begin{align*}
\left|\frac{d^2 s}{dx^2}\right|&<C\left|\frac{1}{1+(e_1^2\cot\t)^2 }\frac{d^2(e_1^2\cot\t)}{dx^2}-\frac{2e_1^2\cot\t}{[1+(e_1^2\cot\t)^2]^2}\left[\frac{d(e_1^2\cot\t)}{dx}\right]^2\right|\\
&<C\left|\frac{e_1^6(\t')^2\cot\t}{1+e_1^8\cot^4\t}\right|_{\cot\t\approx e_1^{-2}}\\&<Ce_1^{4+2\eta}.
\end{align*}

For (\ref{norm-deri3}), we apply the first part of (\ref{s-e1-great-e2}). It not difficult to see that, for $C^2$ estimate, we could use the following
$$
\left|\frac{d^ms}{dx^m}\right|<C\left|\frac{d^m\tan^{-1}(f)}{dx^m}\right|,
$$
where $f(x)=e_1^2\cot\t+e_1^2e_2^{-4}\tan\t$. Note $|f|\ge e_1^{2}e_2^{-2}\ge e_1^{2(1-\b)}$. The worst case again happens when $\t=\frac\pi2$ or $\tan\t=\infty$. Note $e_3>e_1e_2^{-1}$ and $e_2\le e_1^\b$ for some $\b\ll1$. Then for $m=1$, we get
\begin{align*}
\left|\frac{ds}{dx}\right|&<C\left|\frac{d\tan^{-1}(f)}{dx}\right|=C\frac{|f'|}{1+f^2}\\&<Ce_1^{-2}e_2^4\cdot|\t'|<C(e_1e_2^{-1})^{\frac32}\\&<Ce_3^{-\frac32}.
\end{align*}
For $m=2$, we get
\begin{align*}
\left|\frac{d^2s}{dx^2}\right|&<C\left|\frac{d^2\tan^{-1}(f)}{dx^2}\right|=C\left|\frac{f''}{1+f^2}-\frac{2f(f')^2}{(1+f^2)^2}\right|\\&
<Ce'_1e_1^{-3}e_2^{4}\cdot |\t'|<C(e_1e_2^{-1})^{\frac32}\\&<Ce_3^{-\frac32}.
\end{align*}

Now let us consider the function $e_3(x)$. It is clear that $e_3^2(x)+e_3^{-2}(x)=\mathrm{tr}(E^tE)$, where $tr$ stands for trace. Since $e_3>e_2e_1^{-1}\gg 1$, after dropping some small term, we get
$$
e_3^2\approx e_1^2e_2^2\cos^2\t+e_1^{-2}e_2^2\sin^2\t.
$$

The difference between the case in question and the case in Lemma~\ref{l.1step-deri-estimate} is that it is possible that $\cos\t=0$. In fact, this is the the worst case in the sense that the derivatives of $e_3$ may get large with respect to $e_3$ itself. Thus we only need to consider the case that $e_3^2$ is dominated by the second term. So we must have $\sin\t\approx\pm1$. Without loss of generality, assume $\sin\t>0$. Thus, we have $e_3\approx e_1^{-1}e_2\sin\t$ and
$$
\frac{de_3}{dx}\approx \frac{e_1'}{e_1^2}e_2\sin\t+e_1^{-1}e_2'\sin\t+e_1^{-1}e_2\cdot\t'\cos\t.
$$
By our assumption, it is easy to see that $\frac{de_3}{dx}$ is dominated by the second term. Thus we have
$$
\left|\frac{de_3}{dx}\right|<Ce_1^{-1}e_2'\sin\t<Ce_2^{1+\eta-\b}.
$$
Now we need to find a $\gamma_1$ so that
$$
\left|\frac{de_3}{dx}\right|<Ce_2^{1+\eta-\beta}<(e_1^{-1}e_2)^{1+\gamma_1}=e_2^{(1-\b)(1+\gamma_1)}<Ce_3^{1+\gamma_1}.
$$
 Then it is easy to see that it is enough to choose $\gamma_1=\eta+2\eta\b$. Similarly, we get
$$
\left|\frac{d^2e_3}{dx^2}\right|<Ce_1^{-1}e_2''\sin\t<Ce_2^{1+2\eta-\b}.
$$
By choosing $\gamma_2=\eta+4\eta\b$, we have
$$
\left|\frac{d^2e_3}{dx^2}\right|<Ce_2^{1+2\eta-\b}<Ce_2^{(1-\b)(1+\gamma_2)}=C(e_1^{-1}e_2)^{(1+\gamma_2)}<Ce_3^{1+\gamma_2}.
$$
Clearly, this concludes the proof.
\end{proof}
\vskip 0.2cm
Finally, let us show Lemma~\ref{l.s-deri-resonance}.
\begin{proof}({\bf Proof of Lemma~\ref{l.s-deri-resonance}})
Recall we have
\beq\label{difference}
f=\tan^{-1}(l^2[\tan f_1(x)])-\frac\pi2+f_2,
\eeq
where $f_1$ is of type $\mathrm{I}_+$ and $f_2$ of type $\mathrm{II}_-$. Furthermore, $f_1(0)=0$ and $f_2(d)=0$ with $d\ge 0$.
A direct computation shows that $$
\frac{df}{dx}(x)=\frac{l^2+l^2\tan^2[f_1(x)]}{1+l^4\tan^2[f_{1}(x)]}\cdot\frac{df_{1}}{dx}(x)+\frac{df_{2}}{dx}(x).
$$
We may drop the term $l^2\tan^2[f_1(x)]$ since it produces an error of order at most $l^{-2}$. So we basically have
\beq\label{difference-deri1}
\frac{df}{dx}(x)=\frac{l^2}{1+l^4\tan^2(f_{1})}\cdot\frac{df_{1}}{dx}+\frac{df_{2}}{dx}.
\eeq
After dropping some small terms, we also get
\beq\label{difference-deri2}
\frac{d^2f}{dx^2}=\frac{l^2}{1+l^4\tan^2(f_{1})}\cdot\frac{d^2f_{1}}{dx^2}+\frac{d^2f_{2}}{dx^2} -\frac{l^6\tan(f_{1})}{1+l^8\tan^4(f_{1})}\cdot\left(\frac{df_{1}}{dx}\right)^2.
\eeq

(\ref{difference}), (\ref{difference-deri1}) and (\ref{difference-deri2}) clearly imply that, $\|f-f_2\|_{C^1}<Cl^{-\frac32}$ outside $I\setminus B(0, Cl^{-\frac14})$. In the following discussion, let $r_j$, $1\le j\le 9$ be numbers such that $cr^2\le r_j\le Cr^{-2}$. First, by (\ref{difference-deri1}), it is straightforward calculation to see that $f$ has two critical points, $x_3$ and $x_4$, such that
$$
x_3=-r_1l^{-1}\mbox{ and } x_4=r_2l^{-1}.
$$
\vskip 0.2cm
Let us first consider the case $d\ge\frac r3$. By (\ref{difference-deri1}), the following holds true:
$$
f \mbox{ increases from } r_3l^{-1}+cr^3-\pi \mbox{ to } cr^3-r_4l^{-1} \mbox{ on } [x_3,x_4].
$$
Thus there is a zero contained in $B(0, Cl^{-\frac34})$, say $x_1$. Hence, $|x_1|<Cl^{-\frac34}$. We must have $B(d,\frac r4)\cap B(0, Cl^{-\frac14})=\varnothing$ since $d\ge \frac r3$. Hence, there is a zero contained in $B(d,Cl^{-\frac 34})$, say $x_2$. This implies that
$$
B(x_2,\frac r4)\cap B(x_1, Cl^{-\frac14})=\varnothing \mbox{ and } \|f-f_2\|_{C^1}<Cl^{-\frac32}\mbox{ on }B(x_2,\frac r4).
$$
Now we have
$$
\|f-f_2\|_{C^1}<Cl^{-\frac32} \mbox{ on }I\setminus B(x_1, Cl^{-\frac14}), \mbox{ and }|f_2|>cr^3 \mbox{ on }I\setminus B(x_2,\frac r4).
$$
This clearly implies that
$$
|f(x)|>cr^3 \mbox{ for all } x\notin B(x_1,Cl^{-\frac12})\cup B(x_2,\frac r4).
$$

If $d<\frac r3$, similar to the case $d\ge \frac r3$, we get
$$
|f(x)|>cr^3 \mbox{ for all } x\notin B(0,\frac r6)\cup B(d,\frac r6).
$$
Now let us focus the set $B(0,\frac r6)\cup B(d,\frac r6)$. To find $x_{1}$ and $x_{2}$, note $f$ is strictly increasing on $[x_3,x_4]=[-r_1l^{-1}, r_2l^{-1}]$. Then, by (\ref{difference}), we have
\beq\label{bifurcation}
f(x)=\begin{cases}r_3l^{-1}+r_5(d+r_1l^{-1})-\pi,\ &x=-r_1l^{-1}\\ -r_4l^{-1}+r_6(d-r_2l^{-1}),\ &x=r_2l^{-1}.\end{cases}
\eeq
Note $f(Cl^{-2})=\tan^{-1}(Cr_7)-\frac\pi2+f_2(Cl^{-2})<0$. So to solve the possible equation $f(x)=0$, we only need to consider $x>Cl^{-2}$. Then we may write $f(x)$ as
$$
f(x)=\tan^{-1}(l^2[\tan f_1(x)])-\frac\pi2+f_2(x)=-cl^{-2}(r_8x)^{-1}+cr_9(d-x).
$$

If $d>c(l\sqrt{r_8r_9})^{-1}$, we get two solutions $x_{1,2}=\frac{d\pm \sqrt{d^2-c(r_8r_9l^2)^{-1}}}{2}$ of $f(x)=0$. If $0\le d\le c(l\sqrt{r_8r_9})^{-1}$, we get
\begin{align*}
f(x)&=-cl^{-2}(r_8x)^{-1}+cr_9(d-x)=-cr_8^{-1}l^{-2}x^{-1}-cr_9x+cr_9d\\&\le -c\sqrt{\frac{r_9}{r_8}}\cdot l^{-1}+cr_9d.
\end{align*}
The equality holds when $x_{1,2}=c(l\sqrt{r_8r_9})^{-1}$. In any case, we get $|x_1|<Cl^{-\frac 34}$ and $|x_2-d|<Cl^{-\frac34}$. Hence, we get
$$
|f(x)|>cr^3\mbox{ for all } x\notin B(x_1,\frac r6)\cup B(x_2,\frac r6)
$$
in this case. From the discussion above, we clearly get the bifurcation as $d$ varying.

By choosing
$$
\eta_0=\max\{r_1,r_2\},\ \eta_1=r_3,\ \eta_2=c\sqrt{r_8r_9}^{-1},\ \eta_3=c\sqrt{\frac{r_9}{r_8}} \mbox{ and } \eta_4=cr_9,
$$
we get the corresponding estimates in Lemma~\ref{l.s-deri-resonance}.

From the graph of $f$ and the bifurcation procedure, it is also clear that either $0<x_1\le x_2<d$ if $f(x_1)=f(x_2)=0$; or $x_1=x_2$ if $f(x_1)=f(x_2)\neq 0$.

Finally, to estimate $|\frac{d^2f}{dx^2}|$ when $|\frac{df}{dx}|<r^2$. By (\ref{difference-deri1}), we only need to take care of the case when $l^{-\frac52}<\tan^2[f_1(x)]<l^{-\frac32}$. By (\ref{difference-deri2}), in this case, it is clear that $|\frac{d^2g_2}{dx^2}|$ is dominated by
$$
\frac{l^6\tan[f_1(x)]}{1+l^8\tan^4[f_1(x)]}\cdot[\frac{df_{1}}{dx}(x)]^2,
$$
which is of order at least
\beq\label{second-deri-notsmall}
l^{\frac14}[\frac{df_{1}}{dx}(x)]^2>cl^{\frac 14}r^{4}\gg c
\eeq
since $l\gg r^{-1}$. This completes the proof.
\end{proof}

\section{{\bf Applications of the proof of Theorem~\ref{t.main} and \ref{t.continuity}}}\label{application}

It is clear that the proof the Theorem~\ref{t.iteration}, hence of Theorem~\ref{t.main} and \ref{t.continuity}, can be applied to any one parameter family of cocycle maps  $B\in C^2(\CJ\times\R/\Z, \mathrm{SL}(2,\R))$ such that we could get started with our induction as case $(1)_\I$ and $(2)_\II$ in Section~3.1. Here $\CJ\subset\R$ is any compact interval of parameters. In particular, consider
$$
B^{(t,\l)}=\L(x)\circ R_{\psi(x,t)}=\begin{pmatrix}\l(x)&0\\0&\l^{-1}(x)\end{pmatrix}\cdot\begin{pmatrix}\cos\psi(x,t)&-\sin\psi(x,t)\\
\sin\psi(x,t)&\cos\psi(x,t)\end{pmatrix}
$$
with $\psi(t,x)\in C^2(\CJ\times\R/\Z,\R)$, $\l(x)\in C^2(\R/\Z,\R)$. Assume $\l(x)$ and $\psi(t,x)$ satisfying the following conditions.
\vskip0.2cm
First, $\l(x)>\l\gg 1$ and $\left|\frac{d^m\l(x)}{dx^m}\right|<C\l$ for each $x\in\R/\Z$ and $m=1,2$. For each $t$, $\psi(t;\R/\Z)\subset[0,\pi)$ in $\R\PP^1$.

\vskip0.2cm
Secondly, For each $t\in\CJ$, we have that the set $\CC_t:=\{x:\psi(x,t)=\frac\pi2\}=\{c_{t,1}, c_{t,2}\}$ with the possibility that $c_{t,1}=c_{t,2}$.

\vskip0.2cm
Finally, there exists a $r>0$ such that if we consider the interval $I(t)=I_1(t)\cup I_2(t)$ with $I_{j}(t)=B(c_{j,t},r)$, $j=1,2$, then we have the following.
\begin{itemize}
\item If $I_1(t)\cap I_2(t)=\varnothing$, then $\psi(t;\cdot)$ is of type $\I$ on $I_j(t)$, $j=1,2$. Moreover, if $\psi(t;\cdot)$ is of type $\I_-$ on $I_1(t)$ then it is of type $\I_+$ on $I_2(t)$, vice versa.
\item If $I_1(t)\cap I_2(t)\neq\varnothing$, then $\psi(t;\cdot)$ is of type $\II$ on $I(t)$.
\end{itemize}

Let $L(\a,B^{(t,\l)})$ be the Lyapunov exponents of the dynamical systems $(\a,B^{(t,\l)})$. Then we have that the follow corollary of the proof of Theorem~\ref{t.main} and ~\ref{t.iteration}.
\begin{corollary}\label{c.general}
For the given $B^{(t,\l)}$ as above, for each $\a\in DC_\tau$ with $\tau>2$ and each $\e>0$, there exists a $\l_0=\l_0(\a,B,\e)$ such that
$$
L(\a,B^{(t,\l)})>(1-\e)\log\l
$$
for all $(t,\l)\in\CJ\times[\l_0,\infty)$. Moreover, for any fixed $\l>\l_0$ and for all $t,t'\in\CJ$, it holds that
$$
|L(\a,B^{(t,\l)})-L(\a,B^{(t',\l)})|<Ce^{-c(\log|t-t'|^{-1})^\sigma},
$$
where $c,C>0$ depend on $\a,\psi,\l$, and $0<\sigma<1$ on $\a$.
\end{corollary}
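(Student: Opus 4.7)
The plan is to reduce Corollary~\ref{c.general} entirely to the machinery already built: once the starting Lemma~\ref{l.starting} is verified for the cocycle $B^{(t,\l)}$, the induction Theorem~\ref{t.iteration}, the positivity argument of Section~\ref{sec-positivity}, the large deviation Theorem~\ref{t.large-deviation}, and the Avalanche-Principle continuity argument of Section~\ref{sec-continuity} all apply verbatim with the parameter $t\in\CJ$ playing the role that the energy $E$ played for Schr\"odinger cocycles. The key observation is that none of Lemmas~\ref{l.ess-change}--\ref{l.s-deri-resonance} uses any Schr\"odinger-specific structure; they are purely statements about products of $\mathrm{SL}(2,\R)$-matrices presented in polar form.

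First I would note that $B^{(t,\l)}$ is already in the form (\ref{polar-decom}), and the hypotheses on $\l(x)$ supply directly the conclusion (\ref{norm1-deri-control}) of Lemma~\ref{l.reduce-form}, so the polar-reduction step is trivial. Computing the one-step directions, $s_1(x)=\frac{\pi}{2}-\psi(x,t)$ and $u_1(x)=0$, so that
$$g_1(x)=s_1(x)-u_1(x)=\tfrac{\pi}{2}-\psi(x,t).$$
The hypothesis $\CC_t=\{c_{t,1},c_{t,2}\}$ where $\psi(\cdot,t)=\frac{\pi}{2}$ is exactly $\{g_1=0\}=\{c_{t,1},c_{t,2}\}$, and the assumed type $\I$/$\II$ structure of $\psi$ on the intervals $I_j(t)$ transfers, by the identity above, into the same type $\I$/$\II$ structure for $g_1$ on these intervals, with the sign-pairing $(\I_+,\I_-)$ coming from the hypothesis. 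Choosing $N$ large so that $q_N^{-2\tau}\ll r$, the case analysis of Section~\ref{getting-started} (cases $(1)_\I, (1)_{\I,NR}, (1)_{\I,R}, (1)_\II$) runs word-for-word and produces the starting data (\ref{c1-close-c2})--(\ref{g2-upbound}) of Lemma~\ref{l.starting}.

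With the inductive scheme thus launched, Theorem~\ref{t.iteration} applies unchanged and yields the full tower of estimates. The uniform positivity $L(\a,B^{(t,\l)})>(1-\e)\log\l$ then follows by the same construction of the set $D_\infty\subset\R/\Z$ of positive measure in Section~\ref{sec-positivity}; since every step there depends only on the conclusions of Theorem~\ref{t.iteration} and on the technical lemmas of Section~\ref{preliminaries}, no modification is needed. For the continuity half, the LDT of Theorem~\ref{t.large-deviation} transplants identically, and the Avalanche-Principle argument of Lemma~\ref{l.well-approximate-LE} delivers the weak H\"older estimate in $t$, once one notes the elementary Lipschitz bound $|L_n(t)-L_n(t')|\le C(\l)^n|t-t'|$, which is immediate from $\psi\in C^2(\CJ\times\R/\Z,\R)$ and compactness of $\CJ$.

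The main thing to watch is \emph{uniformity in $t\in\CJ$}: every $C^2$-in-$x$ estimate appearing in Theorem~\ref{t.iteration} must hold with constants independent of $t$. This is ensured because the bounds enter only through $\|\psi(\cdot,t)\|_{C^2}$ and the constant $r$ of Definition~\ref{d.type-of-functions}, both of which are uniform on the compact parameter set $\CJ$ by hypothesis. I expect no real obstacle beyond this bookkeeping; the substantive content of Corollary~\ref{c.general} has already been isolated in the modular design of Lemma~\ref{l.starting} and Theorem~\ref{t.iteration}.
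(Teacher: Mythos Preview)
Your proposal is correct and follows essentially the same approach as the paper: the paper does not give a separate proof of Corollary~\ref{c.general} at all, but simply remarks that the proof of Theorem~\ref{t.iteration}, and hence of Theorems~\ref{t.main} and~\ref{t.continuity}, applies verbatim to any $C^2$ one-parameter family of $\mathrm{SL}(2,\R)$ cocycle maps for which one can get the induction started as in cases $(1)_\I$ and $(1)_\II$. Your write-up spells out explicitly why the hypotheses on $\psi$ and $\l(x)$ feed directly into the starting Lemma~\ref{l.starting} via $g_1=\tfrac{\pi}{2}-\psi(\cdot,t)$, which is exactly the observation the paper is relying on implicitly.
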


Clearly, Corollary~\ref{c.partial} is a direct consequence of Lemma~\ref{l.reduce-form} and Corollary~\ref{c.general}. We may also apply Corollary~\ref{c.general} to the Szeg\H o cocycles which arise naturally in the study of orthogonal polynomial on the unit circle. See \cite{zhang} for a brief introduction. For detailed information, see \cite{simon1} and \cite{simon2}. In particular, the cocycle map $A^{(E,f)}:\R/\Z\rightarrow \mathrm{SU}(1,1)$ is given by
\beq\label{szego}
A^{(E,f)}(x)=(1-|f(x)|^2)^{-1/2}\begin{pmatrix}\sqrt E& \frac{-\overline{f(x)}}{\sqrt E}\\ f(x)\sqrt E&\frac{1}{\sqrt E} \end{pmatrix},
\eeq
where $E\in\partial\D$, $\D$ is the open unit disk in complex plane $\C$, and $f:\R/\Z\rightarrow\D$ is a measurable function satisfying
$$
\int_X\ln(1-|f|)d\mu>-\infty.
$$
$\mathrm{SU}(1, 1)$ is the subgroup of $\mathrm{SL}(2,\C)$ preserving the unit disk in $\C\PP^1=\C\cup\{\infty\}$ under M\"obius transformations. It is conjugate in $\mathrm{SL}(2,\C)$ to $\mathrm{SL}(2,\R)$ via
$$
Q=\frac{-1}{1+i}\begin{pmatrix}1& -i\\ 1& i \end{pmatrix}\in \mathbb U(2).
$$
In other words, $Q^*\mathrm{SU}(1,1)Q=\mathrm{SL}(2,\R)$. Now consider a function $\t\in C^2(\R/\Z,\R)$ such that $\t(\R/\Z)\subset [0,\frac12)$ and
for some \emph{Diophantine} $\a$, $\t(x)-\t(x-\a)$ is of the same type of function with $v$ in Theorem~\ref{t.main}.

One easy example is that $\t(x)=\frac12\cos(x)$, of which $\t(x)-\t(x-\a)$ is of the same type of function with $v$ for all irrational $\a$. Then, we have the follow corollary of Corollary~\ref{c.general}.
\begin{corollary}\label{c.szego}
Let $f=\l e^{2\pi i[\t(x)+kx]}$, $0<\l<1$, $k\in\Z$ with $\t$ satisfying the above conditions. Let $\a$ be a Diophantine number such that $\t(x)-\t(x-\a)$ is of the same type of function with $v$ in Theorem~\ref{t.main}. Then for each Diophantine $\a$ and each $\e>0$, there exists a $\l_0=\l_0(\t, \a,\e)\in (0,1)$ such that
$$
L(\a, A^{(E,f)})>-\frac12(1-\e)\log(1-\l)
$$
for all $(E,\l)\in \partial D\times[\l_0,1)$. Moreover, for any fixed $\l\in[\l_0,1)$ and for all $E, E'\in\partial\D$, it holds that
$$
|L(\a, A^{(E,f)})-L(\a, A^{(E',f)})|<Ce^{-c(\log|E-E'|^{-1})^\sigma},
$$
where $c,C>0$ depend on $\a,\t,\l$, and $0<\sigma<1$ on $\a$.
\end{corollary}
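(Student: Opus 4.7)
\smallskip
\noindent\textbf{Proof proposal for Corollary~\ref{c.szego}.}
The plan is to reduce the statement to a direct application of Corollary~\ref{c.general}, the price being a careful polar decomposition of the Szeg\H o cocycle along the lines of Lemma~\ref{l.reduce-form} and of the analogous computation in \cite{zhang}. First I would write $E=e^{2\pi i t}$ with $t\in\R/\Z$ playing the role of the spectral parameter in Corollary~\ref{c.general}, set $\phi(x)=\t(x)+kx$, and conjugate $A^{(E,f)}$ by the unitary $Q$ so that the cocycle takes values in $\mathrm{SL}(2,\R)$; this conjugation preserves norms and hence Lyapunov exponents. Since $|f|\equiv\l$ the prefactor $(1-\l^2)^{-1/2}$ is constant in $x$, and the remaining matrix depends on $x$ only through $e^{2\pi i\phi(x)}$, so after conjugation and a suitable change of basis one arrives at a map of the form $Q^*A^{(E,f)}Q(x)=B^{(t,\l)}(x)$ where $B^{(t,\l)}$ is $\mathrm{SL}(2,\R)$-valued and real-analytic in $(t,\l,\phi(x))$.

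The next step is the polar decomposition $B^{(t,\l)}(x)=U_1(x)\L(x)R_{\psi(x,t)}$ exactly as in Section~\ref{a.1}, absorbing the left rotation $U_1$ into the next factor of the cocycle orbit (which does not affect the growth of products or the shape of $\psi$ needed in Section~\ref{induction}). A direct computation, analogous to the one carried out in \cite{zhang} for Szeg\H o cocycles, shows that $\|B^{(t,\l)}(x)\|^2$ is, up to a bounded multiplicative error, equal to $(1-\l^2)^{-1}$ times a factor that is bounded above and below; hence the diagonal part $\L(x)$ satisfies the norm and $C^2$ bounds \eqref{norm1-deri-control} with $\l$ there replaced by $(1-\l)^{-1/2}$, which is what produces the target lower bound $-\tfrac12(1-\e)\log(1-\l)$. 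A similar calculation gives $\psi(x,t)=\tfrac\pi2 + \arg\bigl(e^{2\pi i t}-\overline{f}(x)f(x-\a)\bigr)+O((1-\l)^{1/2})$ in the relevant regime; writing $f(x)\overline{f(x-\a)}=\l^2 e^{2\pi i[\phi(x)-\phi(x-\a)]}$, one sees that $\psi(x,t)=\tfrac\pi2$ is equivalent to $\phi(x)-\phi(x-\a)\equiv t'\pmod 1$ for a $t'$ which is an affine function of $t$ (involving the factor $k\a$).

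The type $\I$/type $\II$ structure then transfers from the hypothesis on $\t$: since $\phi(x)-\phi(x-\a)=\t(x)-\t(x-\a)+k\a$ is, by assumption, of the same type as $v$ in Theorem~\ref{t.main}, the equation $\psi(x,t)=\tfrac\pi2$ has generically two solutions $c_{t,1},c_{t,2}$ coming from the two non-degenerate extrema of $\t(x)-\t(x-\a)$; on neighborhoods of radius $r$ independent of $t$, $\psi(\cdot,t)$ is either type $\I_+$/type $\I_-$ on the two branches (when the level set is separated) or type $\II$ (when the level set is at a critical value of $\t(x)-\t(x-\a)$), exactly matching the third bullet of the hypotheses of Corollary~\ref{c.general}. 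The $O((1-\l)^{1/2})$ error term above is easily absorbed, since as $\l\to 1^-$ it becomes negligible compared to the $C^2$-size of the leading term. Once these structural hypotheses are verified, Corollary~\ref{c.general} applied to $B^{(t,\l)}$ with parameter $t\in\R/\Z$ yields both the uniform positivity $L(\a,B^{(t,\l)})>(1-\e)\log(1-\l)^{-1/2}$ and the weak H\"older continuity in $t$; translating back to $E\in\partial\D$ costs only an additional Lipschitz factor, which is harmless for the modulus of continuity $e^{-c(\log|E-E'|^{-1})^\sigma}$.

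The main obstacle will be the bookkeeping in verifying the type classification uniformly in $t$: one needs that the two critical points $c_{t,1},c_{t,2}$ depend smoothly on $t$, that their non-degeneracy constant is bounded below uniformly in $t$, and that the transition between the type $\I$ and type $\II$ regimes as $t$ varies is controlled (so that the same radius $r$ works across the whole parameter interval). This is where the hypothesis that $\t(x)-\t(x-\a)$ has \emph{exactly} the structure of the potentials in Theorem~\ref{t.main}, rather than merely two extrema, is essential; the implicit function theorem plus non-degeneracy then produces a uniform $r$ and completes the reduction.
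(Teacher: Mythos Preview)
Your overall strategy matches the paper's: conjugate to $\mathrm{SL}(2,\R)$ via $Q$, polar-decompose, and invoke Corollary~\ref{c.general}. You overcomplicate the middle step, however. Because $|f|\equiv\l$ is constant, the polar decomposition here is \emph{exact}: there is no leftover rotation $U_1$ to absorb, no $x$-dependence in the diagonal part, and no $O((1-\l)^{1/2})$ error term. The paper obtains directly
\[
A^{(\l,t)}=\begin{pmatrix}\sqrt{\dfrac{1+\l}{1-\l}}&0\\[1mm]0&\sqrt{\dfrac{1-\l}{1+\l}}\end{pmatrix}\cdot R_{\psi(x,t)},\qquad \psi(x,t)=\pi\bigl[\t(x)-\t(x-\a)+k\a+t\bigr],
\]
from which the type $\I$/type $\II$ structure of $\psi(\cdot,t)$ and the scale $\log\sqrt{(1+\l)/(1-\l)}\sim-\tfrac12\log(1-\l)$ are read off immediately, with no limiting argument and no implicit-function-theorem bookkeeping for uniform $r$. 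Your proposed expression $\psi(x,t)=\tfrac\pi2+\arg\bigl(e^{2\pi it}-\overline{f}(x)f(x-\a)\bigr)+O((1-\l)^{1/2})$ is not what the polar decomposition actually produces and would need correction; rather than patching it, simply compute $Q^*A^{(E,f)}(x)Q$ explicitly and you will see the clean form above.
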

\begin{proof}
Transform $\mathrm{SU}(1,1)$ to $\mathrm{SL}(2,\R)$, set $E=e^{2\pi t}$ for $0\le t< 1$ and do the polar decomposition. We see that the cocycle map (\ref{szego}) can be transformed into the following form
$$
A^{(\l,t)}=\begin{pmatrix}\sqrt{\frac{1+\l}{1-\l}}&0\\0&\sqrt{\frac{1-\l}{1+\l}}\end{pmatrix}\cdot R_{\psi(x,t)},
$$
where $\psi(x,t)=\pi[\theta(x)-\theta(x-\alpha)+k\alpha+t]$. Clearly, $\psi(x,t)$ satisfies all the conditions of Corollary~\ref{c.general}. This concludes the proof.
\end{proof}

\begin{remark}\label{r.szego}
\cite[Theorem A]{zhang} constructed analytic Szeg\H o cocycles with uniformly positive Lyapunov exponents, which answers a question proposed in \cite[Section 10.16]{simon2} and \cite[Section 3]{damanik}. Clearly, Corollary~\ref{c.szego} is a smooth version, which to the best of our knowledge, is the first example of this kind.
\end{remark}

Finally, let us mention the following application of Theorem~\ref{t.main}. Denote by $\Sigma(H_{\a,v,x})$ the spectrum of the Schr\"odinger operator $H_{\a,v,x}$. Let
$$
\Sigma_{\a,v}=\bigcup_{x\in \R/\Z}\Sigma(H_{\a,v,x}).
$$
Define $L_+(\a,v)=\{E: L(\a,v,E)>0\}$ with $L(\a,v, E)$ the associated Lyapunov exponent.
Recently, Jitomirskaya and Mavi proved the following result in \cite{Jito-Mavi}.
\begin{prop}\label{Jito-Mavi}
For each irrational $\a$, there exists a sequence of rationals $\frac{p_n}{q_n}\rightarrow \a$ such that for any potential $v\in C^{\gamma}(\R/\Z,\R)$ with $\gamma>\frac 12$,
$$
\lim_{n\rightarrow\infty}\Sigma_{\frac{p_n}{q_n},v}\cap L_+(\a,v)=\Sigma_{\a,v}\cap L_+(\a,v).
$$
Moreover, in the \emph{Diophantine} case, the sequence $\frac{p_n}{q_n}$ is the full sequence of continued fraction approximants of $\a$.
\end{prop}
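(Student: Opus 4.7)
\bigskip

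\noindent\textbf{Proposal for the proof of Proposition \ref{Jito-Mavi}.}
The plan is to interpret the limit as Hausdorff convergence of closed sets on any fixed compact window of $\R$, and to split the statement into two one-sided semicontinuity assertions:
(i) if $E_{n_k}\in \Sigma_{p_{n_k}/q_{n_k},v}$ with $E_{n_k}\to E_\infty\in L_+(\alpha,v)$, then $E_\infty\in \Sigma_{\alpha,v}$;
(ii) for every $E\in \Sigma_{\alpha,v}\cap L_+(\alpha,v)$ there exists a sequence $E_n\to E$ with $E_n\in \Sigma_{p_n/q_n,v}$ along the (to be chosen) approximants. Part (i) is the soft direction: by Johnson's theorem the resolvent set coincides with the set of $E$ at which the cocycle is $\CU\CH$, and $\CU\CH$ is open in the joint $(\a,A^{(E-\l v)})$-topology; as $p_n/q_n\to\alpha$ and $v$ is continuous, a UH energy $E_\infty$ survives as a UH energy of the rational cocycle $(p_n/q_n,A^{(E_n-v)})$ for all large $n$, so $E_n\notin\Sigma_{p_n/q_n,v}$. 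No Hölder hypothesis enters here.

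For (ii), which is the main content, I would combine Floquet theory for the rational problem with a quantitative comparison of transfer matrices. For $\alpha'=p/q$ rational, a standard computation gives
\[
\Sigma_{p/q,v}=\{E:\ |\operatorname{tr} A_q^{(E-v),p/q}(x)|\le 2\ \text{for some}\ x\in\R/\Z\},
\]
so the rational spectrum is detected by the trace of the $q$-step monodromy matrix. Fix $E\in\Sigma_{\alpha,v}\cap L_+(\alpha,v)$ with $L:=L(\alpha,v,E)>0$. Choose the subsequence of approximants dictated by the desired rate: in the Diophantine case, the full continued-fraction sequence, and otherwise the subsequence along which $\|q_n\alpha\|_{\R/\Z}$ is exceptionally small. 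The key comparison estimate is
\[
\bigl|v(x+j\alpha)-v(x+jp_n/q_n)\bigr|\le \|v\|_{C^\gamma}\bigl(j\,|\alpha-p_n/q_n|\bigr)^\gamma\le \|v\|_{C^\gamma}q_n^{-\gamma},\qquad 0\le j\le q_n,
\]
which passes to each Schr\"odinger step, and hence to the one-step matrices $A^{(E-v)}(\cdot)$. The crude naive propagation
$\|A_{q_n}^\alpha(x)-A_{q_n}^{p_n/q_n}(x)\|\le C^{q_n}q_n^{1-\gamma}$ is useless, so the heart of the matter is to replace it by a direction-sensitive estimate: compare the two products along their respective unstable directions rather than on a generic vector. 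Since $L(\alpha,v,E)>0$, an LDT (which holds for general ergodic bases from positivity and subadditivity, and is quantitative from positive Lyapunov exponent via an Oseledec/Avalanche-principle argument as in \cite{goldstein}) supplies a set $\mathcal{G}_n\subset\R/\Z$ of density $\to 1$ on which $\tfrac1{q_n}\log\|A_{q_n}^\alpha(x)\|$ is within $\eta$ of $L$. For $x\in\mathcal{G}_n$ the unstable direction is well-defined and Lipschitz-stable under $C^\gamma$ perturbations of the potential, producing the refined bound
\[
\bigl|\operatorname{tr} A_{q_n}^\alpha(x)-\operatorname{tr} A_{q_n}^{p_n/q_n}(x)\bigr|\le e^{q_n L}\cdot q_n^{\,1/2-\gamma+o(1)}
\]
after telescoping and orthogonalizing against the exponentially contracted direction. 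This is where the hypothesis $\gamma>\tfrac12$ enters decisively: it makes the relative trace error tend to $0$ compared with the trace modulus $\sim e^{q_nL}$.

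Having the comparison in hand, I would close the argument as follows. Because $E\in\Sigma_{\alpha,v}$ and the cocycle is NUH but not UH, a gap-labelling/rotation-number variation argument (or equivalently an oscillation-theoretic argument using that $\operatorname{tr} A_{q_n}^\alpha(x_0)$ must change sign on every interval of length $\asymp 1/q_n$ around $x_0$) produces a point $x_n\in\R/\Z$ and an energy $E_n$ with $|E_n-E|\to 0$ such that $\operatorname{tr} A_{q_n}^{\alpha}(x_n,E_n)=0$, hence $|\operatorname{tr} A_{q_n}^{\alpha}(x_n,E_n)|\le 2$. The refined comparison estimate then transfers this to $|\operatorname{tr} A_{q_n}^{p_n/q_n}(x_n,E_n)|\le 2$ for $n$ large, placing $E_n\in \Sigma_{p_n/q_n,v}$. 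The main obstacle, and the step I expect to need the most care, is precisely the direction-sensitive matrix comparison: the naive estimate loses by a factor $(C/e^L)^{q_n}$, so one must use the NUH splitting produced by positive LE together with the Hölder regularity in a sharp, LDT-driven way. In the Diophantine case the LDT of Theorem~\ref{t.large-deviation} gives a sub-exponential exceptional set, so the full continued-fraction sequence works; for general irrational $\alpha$ one selects $q_n$ along a subsequence on which $\|q_n\alpha\|_{\R/\Z}$ is sufficiently small to compensate the weaker LDT available only from positivity of $L$.
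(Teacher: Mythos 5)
The paper does not prove Proposition~\ref{Jito-Mavi}: it quotes the result, attributing it to Jitomirskaya--Mavi \cite{Jito-Mavi}, and uses it only as a black box to deduce the subsequent corollary on Lebesgue measure of the spectrum. There is therefore no paper-internal argument against which to compare your proposal.

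Taken as a standalone reconstruction, your sketch correctly identifies the soft half (upper semicontinuity via openness of $\CU\CH$), the Floquet trace criterion for the rational spectra, and the central obstacle that naive propagation of the $C^\gamma$ perturbation loses a factor of order $C^{q_n}$. But the decisive step is left open. First, you invoke a quantitative LDT ``for general ergodic bases from positivity and subadditivity''; no such general LDT exists without additional structure, and the paper's Theorem~\ref{t.large-deviation} is established only for the specific $C^2$ potential class of Theorem~\ref{t.main} at large coupling, not for arbitrary $C^\gamma$ potentials with $\gamma>\tfrac12$. Second, the claimed refined bound $|\mathrm{tr}\,A_{q_n}^{\alpha}-\mathrm{tr}\,A_{q_n}^{p_n/q_n}|\le e^{q_nL}\,q_n^{1/2-\gamma+o(1)}$ is asserted rather than derived; the gain from $q_n^{1-\gamma}$ to $q_n^{1/2-\gamma}$ is precisely where the $\gamma>\tfrac12$ threshold (as opposed to $\gamma>1$) would bite, and ``telescoping and orthogonalizing against the contracted direction'' is too vague to exhibit the square-root saving. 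Third, your oscillation/rotation-number step producing $E_n$ with $\mathrm{tr}\,A_{q_n}^{\alpha}(x_n,E_n)=0$ needs the zero to land in the LDT-good set $\mathcal G_n$, a point you do not address. So the proposal outlines a plausible strategy but leaves its central estimate as a genuine gap; to evaluate it against the actual argument one must consult \cite{Jito-Mavi} directly.
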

Here, the sequence of bounded measurable sets $B_n\subset\R$ converges to $B$ in the following sense.
\beq\label{set-convergence}
\limsup_{n\rightarrow\infty} B_n=\liminf_{n\rightarrow\infty}B_n=B.
\eeq
Note $\lim_{n\rightarrow\infty}\Sigma_{\a_n,v_n}=\Sigma_{\a,v}$ in Hausdorff metric if $\lim_{n\rightarrow\infty}(\a_n,v_n)=(\a,v)$ in $(\R/\Z,|\cdot|)\times C^0(\R/\Z,\R)$, see \cite[Lemma 12]{aviladamanikzhang}. Thus, the convergence in the sense of (\ref{set-convergence}) is more delicate. In particular, it implies that
$$
\lim_{n\rightarrow\infty}\mathrm{Leb}(B_n)=\mathrm{Leb}(B).
$$
Thus, we have the following immediate corollary of Theorem~\ref{t.main} and Proposition~\ref{Jito-Mavi}.
\begin{corollary}
 Let $\a$ and $v$ be given in Theorem~\ref{t.main}. Let $\frac{p_n}{q_n}$ be the sequence of continued fraction approximants of $\a$. Then there exists $\l_0>0$  such that for any $\l>\l_0$, it holds that
$$
\lim_{n\rightarrow\infty}\Sigma_{\frac{p_n}{q_n},\lambda v}=\Sigma_{\alpha,\lambda v},
$$
which implies that
$$
\lim_{n\rightarrow\infty}\mathrm{Leb}(\Sigma_{\frac{p_n}{q_n},\lambda v})=\mathrm{Leb}(\Sigma_{\alpha,\lambda v}).
$$
\end{corollary}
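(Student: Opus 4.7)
The plan is to recognize that this corollary is an essentially immediate consequence of pairing Theorem~\ref{t.main} with Proposition~\ref{Jito-Mavi}; the whole task is to verify that the former supplies exactly the hypothesis that the latter requires, and then to peel off the Lebesgue-measure statement. Since $v\in C^2\subset C^\gamma$ for any $\gamma\le 2$ (in particular $\gamma>\tfrac12$), Proposition~\ref{Jito-Mavi} applies to the potential $\l v$ at the Diophantine frequency $\a$, and in the Diophantine case its conclusion is for the full continued fraction sequence $p_n/q_n\to\a$.

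First I would fix $\e\in(0,1)$, say $\e=\tfrac12$, and invoke Theorem~\ref{t.main} to obtain $\l_0=\l_0(\a,v,\tfrac12)$ such that for every $\l>\l_0$,
\[
L(E,\l)>\tfrac12\log\l>0\quad\text{for all }E\in\R.
\]
By the definition $L_+(\a,\l v)=\{E:L(\a,\l v,E)>0\}$, this simply says $L_+(\a,\l v)=\R$. In other words, Theorem~\ref{t.main} rules out the possibility that any energy sits in the complement of $L_+$; there is no uniform hyperbolicity/nonhyperbolicity dichotomy to worry about at the level of the spectrum, because the entire real line is already in the positive-LE regime.

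Next I would plug this into Proposition~\ref{Jito-Mavi}, which in the Diophantine case gives
\[
\lim_{n\to\infty}\bigl(\Sigma_{p_n/q_n,\l v}\cap L_+(\a,\l v)\bigr)
=\Sigma_{\a,\l v}\cap L_+(\a,\l v)
\]
in the sense of $\limsup=\liminf$ of sets. Intersecting with $L_+(\a,\l v)=\R$ is the identity, and since each $\Sigma_{p_n/q_n,\l v}$ is automatically contained in the fixed compact interval $[\l\inf v-2,\l\sup v+2]$ (by the analogue of (\ref{interval-contains-spectrum}) for rational frequency), the set-theoretic convergence
\[
\lim_{n\to\infty}\Sigma_{p_n/q_n,\l v}=\Sigma_{\a,\l v}
\]
follows at once. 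The Lebesgue-measure statement is then routine: the uniform bounding interval gives a fixed finite dominating measure, and $\limsup=\liminf$ of sets together with $\mathrm{Leb}(\limsup B_n)\ge\limsup\mathrm{Leb}(B_n)$ and $\mathrm{Leb}(\liminf B_n)\le\liminf\mathrm{Leb}(B_n)$ (the first by Fatou applied to the indicators $1-\mathbf 1_{B_n}$, the second by Fatou applied to $\mathbf 1_{B_n}$) forces $\mathrm{Leb}(\Sigma_{p_n/q_n,\l v})\to\mathrm{Leb}(\Sigma_{\a,\l v})$.

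There is no real obstacle in this argument; the only thing to double-check is that the qualitative ``positivity-everywhere'' output of Theorem~\ref{t.main} is exactly what allows one to discard the $\cap L_+$ restriction in Proposition~\ref{Jito-Mavi}, and that the regularity $C^2$ and Diophantine hypotheses on $(\a,v)$ match (or are stronger than) those of the proposition. Both are immediate, so the corollary is a direct consequence.
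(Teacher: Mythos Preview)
Your proposal is correct and follows exactly the route the paper intends: the paper presents this corollary as ``the following immediate corollary of Theorem~\ref{t.main} and Proposition~\ref{Jito-Mavi}'' without further argument, and your proof spells out precisely this deduction---Theorem~\ref{t.main} forces $L_+(\a,\l v)=\R$, which collapses the intersection in Proposition~\ref{Jito-Mavi}, and the measure convergence then follows from the $\limsup=\liminf$ set convergence as the paper already notes just before stating the corollary.
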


\end{document}